\definecolor{dkgreen}{rgb}{0,0.6,0}
\definecolor{gray}{rgb}{0.5,0.5,0.5}
\definecolor{mauve}{rgb}{0.58,0,0.82}
\tiny\color{gray},
\lstdefinelanguage{none}{
	identifierstyle=
}
\numberwithin{equation}{section}
\def\namedlabel#1#2{\begingroup
	#2%
	\def\@currentlabel{#2}%
	\phantomsection\label{#1}\endgroup
}
\newlength{\starsize}
\newlength{\starspread}
\tikzset{starsize/.code={\setlength{\starsize}{#1}},
	starspread/.code={\setlength{\starspread}{#1}}}
\tikzset{starsize=1mm,
	starspread=3mm}
\pgfqpoint{\starspread}{\starspread}}
\pgfqpoint{\starspread}{\starspread}}
\definecolor{light-gray}{gray}{0.95}
\newlength{\hatchspread}
\newlength{\hatchthickness}
\newlength{\hatchshift}
\newcommand{\hatchcolor}{}
\tikzset{hatchspread/.code={\setlength{\hatchspread}{#1}},
	hatchthickness/.code={\setlength{\hatchthickness}{#1}},
	hatchshift/.code={\setlength{\hatchshift}{#1}},
	hatchcolor/.code={\renewcommand{\hatchcolor}{#1}}}
\tikzset{hatchspread=3pt,
	hatchthickness=0.4pt,
	hatchshift=0pt,
	hatchcolor=black}
\newcommandx{\unsure}[2][1=]{\todo[linecolor=red,backgroundcolor=red!25,bordercolor=red,#1]{#2}}
\newcommandx{\change}[2][1=]{\todo[linecolor=blue,backgroundcolor=blue!25,bordercolor=blue,#1]{#2}}
\newcommandx{\info}[2][1=]{\todo[linecolor=OliveGreen,backgroundcolor=OliveGreen!25,bordercolor=OliveGreen,#1]{#2}}
\newcommandx{\improvement}[2][1=]{\todo[linecolor=Plum,backgroundcolor=Plum!25,bordercolor=Plum,#1]{#2}}
\newcommandx{\thiswillnotshow}[2][1=]{\todo[disable,#1]{#2}}
\newtheorem{teorema}{Theorem}[section]
\newtheorem{proposicao}[teorema]{Proposition}
\newtheorem{lema}[teorema]{Lemma}
\newtheorem{definicao}[teorema]{Definition}
\theoremstyle{definition}
\newtheorem{observacao}{Remark}
\newcommand{\msf}[1]{{\mathsf #1}}
\newcommand{\mc}[1]{{\mathcal #1}}
\newcommand{\mf}[1]{{\mathfrak #1}}
\newcommand{\bb}[1]{{\mathbb #1}}
\newcommand{\eps}{\varepsilon}
\newcommand{\N}{\mathbb N}
\newcommand{\R}{\mathbb R}
\def\centerarc[#1](#2)(#3:#4:#5){\draw[#1] ($(#2)+({#5*cos(#3)},{#5*sin(#3)})$) arc (#3:#4:#5);}
\newcommand{\pfrac}[2]{\genfrac{}{}{}{1}{#1}{#2}}
\newcommand{\function}[3]{{#1: #2 \to #3}}
\def\II{\mathrm{I\kern-0.1emI}}
\let\oldtocsection=\tocsection
\let\oldtocsubsection=\tocsubsection
\let\oldtocsubsubsection=\tocsubsubsection
\renewcommand{\tocsection}[2]{\hspace{0em}\oldtocsection{#1}{#2}}
\renewcommand{\tocsubsection}[2]{\hspace{1em}\oldtocsubsection{#1}{#2}}
\renewcommand{\tocsubsubsection}[2]{\hspace{2em}\oldtocsubsubsection{#1}{#2}}
\DeclareRobustCommand{\SkipTocEntry}[5]{}
\keywords{infinitesimal generator, general Brownian motion, functional central limit theorem}
\begin{document}

\title[The most general BM on the line and on two half-lines]{The most general Brownian motion on\\ the line and on two closed half-lines}

\author[D. Erhard]{Dirk Erhard}
\address{UFBA\\
 Instituto de Matem\'atica, Campus de Ondina, Av. Milton Santos, S/N. CEP 40170-110\\
Salvador, Brazil}
\curraddr{}
\email{dirk.erhard@ufba.br}
\thanks{}

\author[T. Franco]{Tertuliano Franco}
\address{UFBA\\
 Instituto de Matem\'atica, Campus de Ondina, Av. Milton Santos, S/N. CEP 40170-110\\
Salvador, Brazil}
\curraddr{}
\email{tertu@ufba.br}
\thanks{}

\author[W. Muricy]{Wanessa Muricy}
\address{IMPA\\
	Estrada Dona Castorina, 110
	Jardim Botânico
	CEP 22460-320
	Rio de Janeiro, Brazil
	}
\curraddr{}
\email{wanessa.muricy@gmail.com}
\thanks{}

\subjclass[2010]{60J65, 60G53}

\begin{abstract}
	
	In the 1950s, W. Feller characterized the most general Brownian motion on the closed half-line $[0,\infty)$. He showed that any such process is a mixture of reflected, sticky, and killed Brownian motions. By “most general Brownian motion” we mean a càdlàg strong Markov process whose excursions away from zero coincide with those of standard Brownian motion, and which may be sent to the cemetery state upon hitting zero.	
	In this work, we fully characterize the most general Brownian motion on the whole real line and on the union of two closed half-lines. Our results are twofold. First, we show that the most general Brownian motion on $\bb R$ is the process known in the literature as the \textit{skew sticky Brownian motion killed at zero} (see Borodin and Salminen's book \cite[page 127, Section 13, Appendix 1]{Borodin}). Second, we prove that the most general Brownian motion on two closed half-lines is a process, which we call the \textit{skew sticky killed at zero snapping out Brownian motion}. This process extends the snapping out Brownian motion introduced by A.\ Lejay~\cite{Lejay} in 2016.
\end{abstract}

\maketitle

\tableofcontents

\allowdisplaybreaks

\section{Introduction}\label{s1}

The most general Brownian motion on the positive half-line was studied by Feller, and a comprehensive overview on it can be found in Knight's book, see \cite{Knight}. It is defined as a class of strong Markov processes on the positive half line such that they have the same law as Brownian motion until the first time that they hit zero. This can be shown to be a mixture of the reflected Brownian motion,  absorbed Brownian motion, and killed Brownian motion. 
 Denote by $\bb R_{+,\Delta} = [0,\infty)\cup \{\Delta\}$ the one-point compactification of $[0,\infty)$, where $\Delta$ is called the cemetery. Moreover, $C(\bb R_{+,\Delta})$ is the space of continuous functions defined on $\bb R_{+,\Delta}$, where the usual convention $f(\Delta)=0$ is used.
\begin{teorema}[Feller, see \cite{Knight}, Theorem 6.2, p.\ 157]\label{dgBM}
	Any general Brownian motion $ W$ on $\bb R_{+,\Delta}$ has generator $\msf L=\frac{1}{2}\frac{d^2}{dx^2}$ with corresponding domain
	\begin{equation*}
		\mf{D}\big(\msf{L}\big)=\Big\{f\,\in\,C(\bb R_{+,\Delta})\,:f''\in C(\bb R_{+,\Delta})\, \text{ and } c_1f(0)-c_2f'(0)+\frac{c_3}{2}f''(0)=0\Big\}
	\end{equation*}
	for some $c_i\geq 0$ such that $c_1 + c_2 + c_3 = 1$ and $c_1 \neq 1$.
\end{teorema}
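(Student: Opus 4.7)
The plan is to identify the generator $\msf L$ in two stages: first pin down its interior action from the general-Brownian-motion hypothesis, then classify the admissible boundary behaviour at $0$ using Ventsel-type constraints, and finally verify the Hille--Yosida--Ray conditions to conclude that the resulting semigroup is Feller.

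For the interior, the defining property of a general Brownian motion --- that $W$ agrees in law with a standard Brownian motion up to the first hitting time $\tau_0$ of $0$ --- allows one to read off the generator on test functions supported away from $0$. Fix $f \in C^2_c((0,\infty))$ and $x>0$ in its support. Since $\PP_x(\tau_0<t) = o(t)$ as $t\downarrow 0$, the semigroup $P_t f(x)=\E_x[f(W_t)]$ coincides with the standard Brownian semigroup applied to $f$ up to an exponentially small error, and Dynkin's formula yields
\begin{equation*}
\msf L f(x) \;=\; \lim_{t\downarrow 0}\tfrac{1}{t}\bigl(\E_x[f(W_{t\wedge\tau_0})]-f(x)\bigr) \;=\; \tfrac12 f''(x), \qquad x>0.
\end{equation*}
The closedness of $\msf L$ together with the continuity requirement $\msf L f\in C(\bb R_{+,\Delta})$ then extends this identity to $\msf L f=\tfrac12 f''$ on the whole of $\mf D(\msf L)$.

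The heart of the proof is the classification of the admissible boundary conditions at $0$. The hypothesis that the excursions of $W$ from $0$ coincide with Brownian excursions is used to exclude any non-local boundary mechanism: by It\^o's excursion theory, a non-local contribution would correspond to a jump measure off $0$ that is incompatible with continuous Brownian excursion paths. Under this restriction, Ventsel's classification of boundary conditions for one-dimensional second-order operators on the half-line forces
\begin{equation*}
 a\,f(0)\;-\;b\,f'(0)\;+\;c\,\msf L f(0) \;=\; 0
\end{equation*}
for constants $a,b,c\geq 0$ not all zero, the three coefficients corresponding respectively to killing, reflection, and stickiness at the origin. Normalising $a+b+c=1$ produces the form stated in the theorem.

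To show conversely that every triple $(c_1,c_2,c_3)$ in the stated range actually defines a Feller process, one verifies the Hille--Yosida--Ray hypotheses. The positive maximum principle is transparent: at an interior maximum $f''\leq 0$, while at $x=0$ one has $f(0)\geq 0$, $f'(0)\leq 0$, $f''(0)\leq 0$, so non-negativity of the $c_i$ forces $\msf L f(0)=\tfrac12 f''(0)\leq 0$. Dense range of $\lambda-\msf L$ is obtained by solving the resolvent ODE $\tfrac12 f''-\lambda f=-g$ on $[0,\infty)$ and using the boundary relation to fix the free constant in front of the bounded fundamental solution $e^{-\sqrt{2\lambda}\,x}$; the normalization $c_1\neq 1$ ensures that this algebraic equation for the constant has a well-defined non-degenerate solution. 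The main obstacle in the argument is the locality step --- converting the ``same excursions as BM'' hypothesis into the absence of a jump term in the boundary condition --- as this is precisely where the general-Brownian-motion assumption carries the full weight of the argument; everything else is standard one-dimensional diffusion machinery.
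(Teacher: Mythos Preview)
The paper does not prove this theorem; it is cited from Knight as background. The relevant comparison is with the paper's proofs of the analogous Theorems~\ref{thm:general_R} and~\ref{teo:5.5}, which implement the same two-stage plan but differ from your sketch in both stages.

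\textbf{Interior action.} Your argument runs in the wrong direction. You fix $f\in C^2_c((0,\infty))$, compute $\msf L f$ on such $f$, and then invoke ``closedness of $\msf L$'' to extend the identity $\msf L f=\tfrac12 f''$ to all of $\mf D(\msf L)$. But closedness does not supply this: it does not tell you that an arbitrary $f\in\mf D(\msf L)$ is twice differentiable, nor does knowing $\msf L$ on a subspace determine it on the full domain unless that subspace is a core, which you have not established. The paper (Proposition~\ref{generator}) instead takes an unknown $f\in\mf D(\msf L)$ and uses Dynkin's characteristic-operator formula at each $x>0$ to prove directly that $f'(x)$ and then $f''(x)$ exist and that $\msf L f(x)=\tfrac12 f''(x)$.

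\textbf{Boundary classification.} You invoke Ventsel's theorem and It\^o excursion theory as black boxes to exclude non-local terms. That is a legitimate high-level route, but the paper's method is more elementary and self-contained: it writes Dynkin's formula at $0$ via the exit measure~\eqref{eq:exitmeas}, extracts the boundary coefficients plus a residual jump measure by a weak-compactness argument (Proposition~\ref{weakconvergencemeasures}, Lemma~\ref{quaseFeller}), and then kills the residual measure using a locality lemma (Lemma~\ref{neighb0}) showing that membership in $\mf D(\msf L)$ depends only on the germ of $f$ at $0$. Your excursion-theoretic sentence would need substantial work to reach the same rigour.

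\textbf{A small error.} Your reason for excluding $c_1=1$ is wrong: when $c_1=1$ the boundary relation is $f(0)=0$, and the algebraic equation for the free constant in the resolvent reads $f_p(0)+A=0$, which is perfectly solvable. The actual obstruction, as the paper remarks after Theorem~\ref{thm:general_R}, is that $\{f:f(0)=0\}$ is not dense in $C(\bb R_{+,\Delta})$, so Hille--Yosida fails at the density hypothesis, not at the range hypothesis.
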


\begin{figure}[!htb]
	\centering
	\begin{tikzpicture}[scale=1]
		
		\coordinate (A) at (-1,0);
		\coordinate (B) at (1,3.46);
		\coordinate (C) at (3,0);
		\coordinate (E) at (1,1);
		
		\draw (A) -- node[midway, above, rotate=60, align = center] {\small{elastic BM}\\ $c_3=0$} (B);
		\draw (B) -- node[midway, above, rotate=-60, align = center] {\small{sticky BM}\\ $c_1=0$} (C);
		\draw (C) -- node[midway, below, align = center] {\small{exponential holding BM}\\ $c_2=0$} (A);
		\draw[very thick] (A) -- (B) -- (C) -- cycle;
		\fill[gray!20] (A) -- (B) -- (C) -- cycle;
		\draw (E)  node[align = center]{\small{mixed BM} \\$c_1, c_2, c_3>0$};
		\draw (A) node[left, align = center]{\small{killed BM}\\ $c_1=1$};
		\draw (B)+(0,0.5) node[align=center]{\small{reflected BM}\\$c_2=1$};
		\draw (C) node[right, align = center]{\small{absorbed BM}\\ $c_3=1$};
		\filldraw[fill=white] (A) circle (3pt);
		\filldraw[fill=blue] (B) circle (3pt);
		\filldraw[fill=red] (C) circle (3pt);
	\end{tikzpicture}
	\caption{Description of the general Brownian motion on the half-line according to the chosen values on the simplex $c_1+c_2+c_3=1$.}
	\label{FigSimplex}
\end{figure}
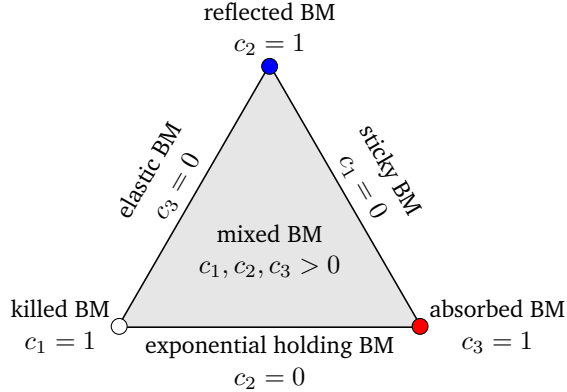

The simplex described in Theorem~\ref{dgBM} corresponds to the situation illustrated in the Figure~\ref{FigSimplex}.
The case $c_2 = 1$ corresponds to the \emph{reflected Brownian motion} (which has the distribution of the modulus of a standard BM), while $c_3 = 1$ yields the absorbed Brownian motion, which has the distribution of a standard BM stopped at zero. The extremal case $c_1=1$ is not part of Feller's Theorem~\ref{dgBM}, which imposes $c_1\neq 1$.   Indeed, for $c_1=1$ the set $\mf{D}\big(\msf{L}\big)$ is not  dense in the set of continuous functions on $\{\Delta\}\cup [0,\infty)$ decaying at infinity. Hence, it cannot be the domain of a generator on the closed half-line.  See  \cite[Chapter 2]{Chung} for more on the killed BM.

In this work, we extend Feller’s theorem to the setting of the full real line $\bb R\cup\{\Delta\}$
and to the setting of two closed half-lines $(-\infty,0-]\cup [0+,\infty)\cup\{\Delta\}$. We introduce a \textit{precise notion} of the most general Brownian motion in these domains, \textit{fully characterize} its generator, and establish its existence. 

On the real line, we prove that the most general Brownian motion is the \textit{Skew Sticky Killed at zero BM}, whose generator can be found in the book by Borodin and Salminen \cite[page 127, Section 13, Appendix 1]{Borodin}. So we conclude that no new process can be extracted from this setting, and the already known Skew Sticky Killed at zero BM is indeed the most general BM on the real line.

On two closed half-lines, we obtain a class of processes that generalizes the Snapping Out Brownian Motion (SNOB) constructed by A.\ Lejay in 2016, see \cite{Lejay}. The state space of the SNOB is $(-\infty,0-]\cup[0+, \infty)$ and has a behavior that can be described as follows. In each half-line, the process behaves as a reflected BM. However, when the local time of the process at $0+$ (or $0-$) reaches a value given by an independent exponential random variable, the process exchange the half-line and starts again. The class of process we achieve is a mixture of skew Brownian motion, sticky Brownian motion, killed at zero Brownian motion and the SNOB. 

As a consequence of our complete characterization of the generator of the most general Brownian motion on two half-lines, we establish a functional central limit theorem for a class of random walks on $\{\ldots, \frac{-2}{n}, \frac{-1}{n}, \frac{0-}{n}\} \cup \{\Delta\} \cup \{\frac{0+}{n}, \frac{1}{n}, \frac{2}{n}, \ldots\}$ whose behaviour at $0+$ and $0-$ involves a mixture of all the behaviours described above. Its limit is the \textit{Skew Sticky Killed at Zero Snapping Out Brownian Motion.} Computational simulations are also provided to illustrate this behavior.

\subsection{Related literature}
\subsubsection*{Brownian motion with special behaviour at zero}
Brownian motion with modified behaviour at the origin has been a central topic of study since Feller’s seminal works. A classical example appears in Portenko’s analysis \cite{Portenko1976} (see also \cite[Theorem~3.4, p.~146]{Portenko1990}) of the stochastic differential equation
\[
dY_{\varepsilon}(t) \;=\; b_{\varepsilon}(Y_{\varepsilon}(t))\,dt + dw(t)\,, \qquad t \ge 0,\; Y_\varepsilon(0)=y\,,
\]
where $w$ denotes Brownian motion, $b_\varepsilon(x)=L_\varepsilon \varepsilon^{-1} b(\varepsilon^{-1}x)$, and $b$ is an integrable function supported on $[-1,1]$.
When $L_\varepsilon$ is constant, Portenko proved that $Y_\varepsilon$ converges as $\varepsilon \to 0$ to skew Brownian motion, introduced in \cite{ItoMcKean1974}.  
If instead $L_\varepsilon \to \infty$ at a suitable rate, Mandrekar and Pilipenko \cite{MandrekarPilipenko2016} showed that the limit is a \emph{Brownian motion with a hard membrane}, also known as SNOB, a term coined by Lejay in~\cite{Lejay}.

The SNOB has since attracted considerable attention. Bobrowski \cite{Bobrowski2015,Bobrowski2016} proved that SNOB with large permeability converges to skew Brownian motion, and in \cite{BobrowskiRatajczyk2025} quantitative convergence rates are obtained.
In~\cite{EFS2020}, Erhard, Franco and Silva studied a nearest-neighbour random walk on $\mathbb{Z}$ with the jump rates illustrated in Figure~\ref{fig1a}.
\begin{figure}[!htb]
	\centering
	\begin{tikzpicture}
		\centerarc[thick,<-](1.5,0.3)(10:170:0.45);
		\centerarc[thick,->](1.5,-0.3)(-10:-170:0.45);
		\centerarc[thick,<-](2.5,0.3)(10:170:0.45);
		\centerarc[thick,->](2.5,-0.3)(-10:-170:0.45);
		\centerarc[thick,->](3.5,-0.3)(-10:-170:0.45);
		\centerarc[thick,<-](3.5,0.3)(10:170:0.45);
		\centerarc[thick,->](4.5,-0.3)(-10:-170:0.45);
		\centerarc[thick,<-](4.5,0.3)(10:170:0.45);
		\centerarc[thick,->](5.5,-0.3)(-10:-170:0.45);
		\centerarc[thick,<-](5.5,0.3)(10:170:0.45);
		
		\draw (-1,0) -- (8,0);
		
		\shade[ball color=black](4,0) circle (0.25);
		
		\filldraw[fill=white, draw=black]
		(6,0) circle (.25)
		(2,0) circle (.25)
		(1,0) circle (.25)
		(0,0) circle (.25)
		(3,0) circle (.25)
		(5,0) circle (.25)
		(7,0) circle (.25)
		;

		\draw (1.3,-0.05) node[anchor=north] {\small $\bf - 3 $};
		\draw (2.3,-0.05) node[anchor=north] {\small $\bf - 2 $};
		\draw (3.3,-0.05) node[anchor=north] {\small $\bf -1$};
		\draw (4.3,-0.05) node[anchor=north] {\small $\bf 0$};
		\draw (5.3,-0.05) node[anchor=north] {\small $\bf 1$};
		\draw (6.3,-0.05) node[anchor=north] {\small $\bf 2$};
		\draw (1.5,0.8) node[anchor=south]{$\frac{1}{2}$};
		\draw (1.5,-0.8) node[anchor=north]{$\frac{1}{2}$};
		\draw (2.5,0.8) node[anchor=south]{$\frac{1}{2}$};
		\draw (2.5,-0.8) node[anchor=north]{$\frac{1}{2}$};
		\draw (3.5,0.8) node[anchor=south]{$\frac{\alpha}{2n^{\beta}}$};
		\draw (4.5,-0.8) node[anchor=north]{$\frac{1}{2}$};
		\draw (4.5,0.8) node[anchor=south]{$\frac{1}{2}$};
		\draw (5.5,-0.8) node[anchor=north]{$\frac{1}{2}$};
		\draw (5.5,0.8) node[anchor=south]{$\frac{1}{2}$};
		\draw (3.5,-0.8) node[anchor=north]{$\frac{\alpha}{2n^{\beta}}$};
	\end{tikzpicture}
	\caption{Jump rates for the \textit{slow bond random walk}}
	\label{fig1a}
\end{figure}
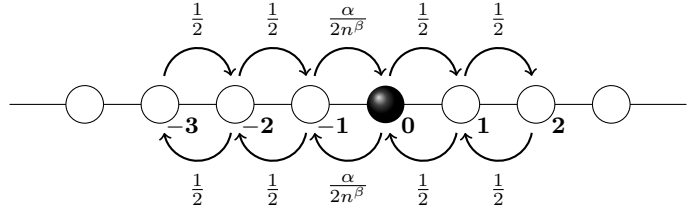
Here $\alpha>0$ and $\beta \ge 0$. Using Kolmogorov’s backward equation, the Feynman--Kac representation, and precise asymptotics for local times of the simple random walk, the authors showed the following trichotomy:  
\begin{itemize}
	\item if $\beta \in [0,1)$, the rescaled walk converges to standard Brownian motion;  
	\item if $\beta > 1$, it converges to reflected Brownian motion;  
	\item if $\beta = 1$, it converges to SNOB.  
\end{itemize}
In all three cases, explicit rates of convergence were established.
The functional central limit theorem proved in the present paper does not include such rates. However, we conjecture that by adapting the methods of \cite{EFS2024}, analogous convergence rates could be derived with relatively minor additional effort.

\subsubsection*{Particle Systems with boundaries}
The study of hydrodynamic limits and fluctuations of interacting particle systems is a classical topic in probability theory. Recently, considerable attention has shifted to systems with boundary effects. Such boundary conditions may arise from reservoirs of particles~\cite{BCG23,MMOL22,Goncalves2019}, or from edges across which particles cross at modified rates. The latter corresponds to ``soft'' boundaries and has been extensively investigated for exclusion processes in
\cite{efgnt,fgn2,fgn3,FN,fgn1,FGSCMP}.

An exclusion process with a single particle reduces to a random walk. Interestingly, as observed in \cite{fgn2,fgn3, fgn1}, analysing the full interacting system may in some situations be simpler than understanding the behaviour of a single particle near a boundary. The present work complements this perspective: together with the techniques developed in~\cite{EFS2024}, it allows one to describe in full generality the boundary behaviour of a single particle at a distinguished edge. Such a description is expected to be a useful ingredient in the study of interacting particle systems with general boundary mechanisms.

\subsection{Outline of the proof}
For the characterisation of the generator of the general Brownian motion on the line and on two half-lines we first distinguish between three cases based on the exponential random variable that determines the exit time of zero (or of $0+$ and $0-$ in the case of two half-lines). A crucial step is the application of Dynkin’s characterization of generators (see \cite[Theorem 3.2.29]{Knight}), which relies on a careful analysis of the exit measure defined in \eqref{eq:exitmeas} (for the case of the line; the argument for two half-lines is analogous). The existence of the corresponding processes then follows from an application of the Hille–Yosida Theorem~\ref{thm:Hille-Yosida}.  

The paper is divided as follows. In Section~\ref{s2} we properly state definitions and results. Section~\ref{s3} is reserved to the most general BM on the real line, and Section~\ref{s4} is reserved to the most general BM on two closed half-lines.

\section{Statements of Results}\label{s2}

\subsection{The most general  BM on \texorpdfstring{$\bb R_{\Delta}$}{R  Delta}}
Consider the state space $\R_\Delta= \bb R\cup\{\Delta\}$, the one-point compactification of $\R$, where $\Delta$ will be called the \textit{cemetery}. We will assume the standard convention  that functions will value zero at  the cemetery $\Delta$. In this setting,  $C(\R_\Delta)$ will be the set of continuous functions such that $f(\Delta)=0$ and $f(x)\to 0$ as $x \rightarrow\pm \infty$. 

\begin{definicao}\label{def:GBMR}
	A stochastic process $(W_t)_{t \geq 0}$ on $\R_\Delta$ is called \textit{a general Brownian motion on $\R_\Delta$ with boundary conditions at the origin} if it satisfies the following properties:
	\begin{itemize}
		\item $(W_t)_{t \geq 0}$  is a strong Markov process with values in $\bb R_{\Delta}$ and it has càdlàg trajectories.
		
		\item The sample paths of $(W_t)_{t \geq 0}$ are continuous on the set $\big\{t\geq 0: \lim_{s \to t^-} W_s \break\mbox{ or } W_t \notin\{0, \Delta\}\big\}$.
		
		\item The point $\Delta$, called the cemetery, is an absorbing state.
		
		\item Let $\tau_0 = \inf\{t\geq 0: W_t = 0\}$ be the hitting time  of $0$. For every initial point $x \in \R$, the law of the process $(W_{t\wedge \tau_0})_{t\geq 0}$ started at $x$ 
		coincides with the law of a standard Brownian motion on $\R$ absorbed at $0$.
	\end{itemize}
\end{definicao}

\begin{observacao}
The above definition ensures that $W$ behaves like a standard Brownian motion until it hits $0$, and has a Markovian behavior afterwards. From the properties of the standard Brownian motion, we know that $\tau_0< \infty$ a.s., and by right-continuity of the paths we have $W_{\tau_0} = 0$. Moreover, the second condition in the previous definition implies that the only allowed jump is from $0$ to $\Delta$, which is an absorbing state. Therefore, $W$ cannot reach $\Delta$ before $0$. 
\end{observacao}
Our first main result consists of the following characterization of the general Brownian motion on $\bb R_\Delta$.
Given a function $g$ we denote $g(0+) = \lim_{x \to 0^+} g(x)$ and $g(0-) = \lim_{x \to 0^-} g(x)$, whenever these limits exist.

\begin{teorema}\label{thm:general_R}
	A stochastic process  is a general Brownian motion $W$ on $\R_\Delta$ with boundary conditions at the origin if, and only if, its infinitesimal generator is given by
	\begin{equation*}
		\msf{L}f(x) = \frac{1}{2}f''(x) \text{ for } x \neq 0\,, \quad   \msf{L}f(0) = \frac{1}{2}f''(0-) = \frac{1}{2}f''(0+)\,,\quad \text{ and }\quad
		\msf{L}f(\Delta)=0\,,
	\end{equation*}
	with domain $\mf {D}(\msf{L})$ consisting of all functions $f$ belonging to $C(\R_{\Delta})$, which are twice differentiable in $\R \backslash  \{0\}$ with second derivative extending to an element of $C(\R_{\Delta})$, and the equation
	\begin{align}
		\label{firstdomain}
		& \nonumber c_1f(0) + c^-_2f'(0-) - c^+_2f'(0+) + \frac{c_3}{2}f''(0+) \;=\;0
	\end{align}
	holds for some constants $c_1, c^-_2, c^+_2, c_3 \geq 0$ with $c_1 + c^-_2 + c^+_2 + c_3 = 1$, and $c_1 \neq 1$.
\end{teorema}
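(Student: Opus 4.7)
My plan is to prove both directions of the equivalence separately. For necessity I would combine Dynkin's characterization of the generator with a careful asymptotic analysis of the exit measure from small neighbourhoods of $0$; for sufficiency I would apply the Hille--Yosida Theorem (Theorem~\ref{thm:Hille-Yosida}) to the candidate operator, thereby producing the required Feller semigroup.

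Assume first that $W$ is a general Brownian motion on $\bb R_\Delta$ with generator $\msf L$. Because $W$ agrees in law with standard Brownian motion until its first hitting of $0$, a standard Dynkin calculation on any interval $(a,b)\subset\bb R\setminus\{0\}$ yields $\msf L f(x)=\tfrac12 f''(x)$ for $x\neq 0,\Delta$; in particular every $f\in\mf D(\msf L)$ must have $f''$ extending continuously to $\bb R_\Delta$. The substantive content is the boundary condition at zero, which I would obtain from \cite[Theorem 3.2.29]{Knight} applied to the exit time $\tau_\eps=\inf\{t\ge 0:W_t\notin(-\eps,\eps)\}$ starting at $0$. The associated exit measure (to be introduced in \eqref{eq:exitmeas}) is concentrated on $\{-\eps,\eps,\Delta\}$; writing $p_\eps^\pm=\PP_0(W_{\tau_\eps}=\pm\eps)$, $p_\eps^\Delta=\PP_0(W_{\tau_\eps}=\Delta)$, and $h_\eps=\E_0[\tau_\eps]$, a second-order Taylor expansion of $f\in\mf D(\msf L)$ around $0$ from the left and from the right gives
\[
\E_0\bigl[f(W_{\tau_\eps})-f(0)\bigr] \;=\; \eps\bigl(p_\eps^+ f'(0+) - p_\eps^- f'(0-)\bigr) - p_\eps^\Delta f(0) + \tfrac{\eps^2}{2}\bigl(p_\eps^+ f''(0+) + p_\eps^- f''(0-)\bigr) + o(\eps^2).
\]
Dividing by $h_\eps$ and letting $\eps\downarrow 0$, the ratios $\eps p_\eps^\pm/h_\eps$, $p_\eps^\Delta/h_\eps$, $\eps^2/h_\eps$ tend to finite non-negative limits, and after a suitable normalization they play the role of $c_2^\pm$, $c_1$ and $c_3$ respectively, with $c_1+c_2^-+c_2^++c_3=1$. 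Since $\msf L f(0)$ must simultaneously equal $\tfrac12 f''(0+)$ and $\tfrac12 f''(0-)$, the independence of $\msf L f(0)$ from $f$ then forces precisely the linear relation stated in \eqref{firstdomain}. The case $c_1=1$ must be excluded since it makes $\mf D(\msf L)$ non-dense in $C(\bb R_\Delta)$, exactly as in Feller's half-line setting.

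For the converse, I would verify the hypotheses of Hille--Yosida for the candidate operator $\msf L$ with the prescribed domain. Density in $C(\bb R_\Delta)$ follows by mollification and truncation once $c_1\neq 1$. The positive maximum principle is checked pointwise: at an interior positive maximum $f''\le 0$; at a positive maximum attained at $0$ the slope signs $f'(0-)\ge 0$, $f'(0+)\le 0$ combined with the boundary relation give $(c_3/2)f''(0+)\le -c_1 f(0)\le 0$, so $\msf L f(0)\le 0$. The range condition reduces to the ODE $\lambda f-\tfrac12 f''=g$ on each half-line with decay at $\pm\infty$, continuity at $0$, and the linear boundary relation, which amounts to a $2\times 2$ linear system whose determinant is nonzero under the sign constraints on the $c_i$. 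The resulting Feller semigroup produces a càdlàg strong Markov process, and the form of $\msf L$ away from $0$ immediately matches the pre-$\tau_0$ law with that of standard Brownian motion, so Definition~\ref{def:GBMR} is satisfied.

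The main obstacle will be the asymptotic exit-measure analysis, namely the proof that the limits $\lim_{\eps\downarrow 0}\eps p_\eps^\pm/h_\eps$, $\lim p_\eps^\Delta/h_\eps$ and $\lim \eps^2/h_\eps$ actually exist, that they correctly classify the sticky ($c_3>0$), skew ($c_2^\pm>0$) and killed ($c_1>0$) regimes in a unified manner, and that no further degrees of freedom appear. The three-regime case split based on the law of the holding time at $0$, combined with the strong Markov property and the excursion-theoretic content of the standard Brownian behaviour before $\tau_0$, will be indispensable at this step; the asymmetry between the two sides of $0$ must also be tracked carefully so that the two distinct skew coefficients $c_2^-$ and $c_2^+$ emerge naturally from the analysis.
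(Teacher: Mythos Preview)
Your overall architecture---Dynkin's formula for the necessity direction and Hille--Yosida for the sufficiency---is exactly the paper's. The converse via Hille--Yosida (density, dissipativity, range condition solved through the resolvent ODE with a linear system for the matching constants) is essentially identical to what the authors do.

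The genuine gap is in your exit-measure step. Your assertion that the four ratios $\eps p_\eps^\pm/h_\eps$, $p_\eps^\Delta/h_\eps$, $\eps^2/h_\eps$ tend to \emph{finite} limits is false in general: for skew Brownian motion one has $p_\eps^\Delta=0$, $p_\eps^\pm$ constant, and $h_\eps\asymp\eps^2$, so $\eps p_\eps^\pm/h_\eps\to\infty$. You correctly flag this as the main obstacle, but your phrasing commits to the wrong statement. The paper's remedy is to normalise first: with $K_\eps=1+\nu_\eps(\Delta)+\int(1\wedge|x|)\,\nu_\eps(dx)$ all the relevant quantities divided by $K_\eps$ lie in $[0,1]$, and one then extracts \emph{subsequential} limits by compactness (their Proposition~\ref{weakconvergencemeasures}). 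Rather than Taylor-expanding $f(\pm\eps)-f(0)$ and chasing the remainder, the paper integrates the continuous function $x\mapsto (f(x)-f(0))/(1\wedge|x|)$ against the normalised exit measure, which sidesteps any second-order error control and directly produces the coefficients $c_1,c_2^\pm,c_3$ together with two a priori residual measures $\nu^\pm$ on $(0,\infty)$ and $(-\infty,0)$.

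One comparative remark in your favour: since here the exit measure from $(-\eps,\eps)$ is supported on $\{-\eps,\eps,\Delta\}$, the weak limit of its restriction to $(0,\infty)$ is a point mass at $0$, so the residual measures $\nu^\pm$ the paper obtains are in fact automatically zero. The paper instead appeals to a separate localisation lemma (Lemma~\ref{neighb0}) to kill $\nu^\pm$; your instinct that no integral terms should survive is therefore correct, and a cleaned-up version of your argument would bypass that lemma entirely. A second point you leave implicit: having shown that every $f\in\mf D(\msf L)$ satisfies the boundary relation, you still owe the reverse inclusion $\mf D'\subset\mf D(\msf L)$ for the \emph{given} process $W$; the paper handles this by a short resolvent-uniqueness argument inside Lemma~\ref{quaseFeller}, and you should insert the analogous step.
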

\begin{observacao}
Note that by Remark~\ref{rem:dif} below if $f$ is in the domain of $\msf{L}$, then its first derivative has side limits in zero. 
\end{observacao}
\begin{observacao}
	Observe that if $c_1 =1$, then since $c_1 + c^-_2 + c^+_2 + c_3 = 1$, one would have the boundary condition $f(0) = 0$. However, this set of functions is not dense in $C(\R_\Delta)$, and therefore it cannot be the domain of a generator (see Remark~\ref{obs:Feller} and the Hille-Yosida Theorem~\ref{thm:Hille-Yosida} in the appendix). This explains why the case $c_1= 1$ is excluded in the above result.
\end{observacao}
\begin{observacao}
We note that by~\cite[Lemma 3.2.28 ii]{Knight}, the identity $(\msf{L}f)(\Delta)=0$ for all $f\in \mf {D}(\msf{L})$ implies that $\Delta$ is an absorbing point.
\end{observacao}

Consider a general Brownian motion on $\R_\Delta$ such that its constants $c_1, c^-_2, c^+_2, c_3$ are positive and  assume that $c_2^- + c_2^+>0$. Dividing the domain's equation by $c_2^- + c_2^+$, we get that
\begin{equation*}
	\frac{c_1}{c_2^- + c_2^+}f(0) + \frac{c_2^-}{c_2^- + c_2^+}f'(0-) - \frac{c_2^+}{c_2^- + c_2^+}f'(0+) + \frac{c_3}{2(c_2^- + c_2^+)}f''(0+) \;=\; 0\,.
\end{equation*}
Setting $\gamma = \frac{c_1}{c_2^-+ c_2^+}$, $\beta = \frac{c_2^+}{c_2^-+ c_2^+}$, $c = \frac{c_3}{2(c_2^- + c_2^+)}$, we have that
\begin{equation*}
	cf''(0+) \;=\; \beta f'(0+) - (1 - \beta)f'(0-) - \gamma f(0)\,.
\end{equation*}
This corresponds to the boundary condition of the Brownian Motion which is skew at $0$, sticky at $0$ and killed elastically at $0$, see Borodin and Salminen's book \cite[page~127, Section~13, Appendix~1]{Borodin}. Hence, the above result implies that the general Brownian motion on $\R_\Delta$ with boundary conditions at the origin essentially coincides with the Skew Sticky Killed at Zero  Brownian Motion and no new Brownian-type process can be extracted in this context.
The case $c_2^-+c_2^+=0$ corresponds to the exponential holding Brownian motion. Once it reaches zero it stays there for an exponential amount of time until gets killed, i.e., it jumps to the cemetery $\Delta$ and stays there forever. Thus, if it starts on the positive (respectively negative) half-line it is rather a process on $\{\Delta\}\cup[0,\infty)$ (respectively $\{\Delta\}\cup(-\infty,0]$).

\subsection{The most general  BM on \texorpdfstring{$\bb G_{\Delta}$}{G Delta}}
Our second main result consists of a similar result on the state space $\bb G_\Delta = \bb G \cup \{\Delta\}$ which is the one-point compactification of the set $\bb G= (-\infty, 0-] \cup [0+, \infty)$  composed of two closed half-lines. Here $\Delta$ is the compactification point. We highlight  that $0-$ and $0+$ are distinct points.
Similarly to before, the set $C(\bb G_\Delta)$ will be the set of continuous functions $\function{f}{\bb G_\Delta}{\R}$ satisfying $f(\Delta)=0$, and having zero limit as $x \rightarrow\pm \infty$. 

\begin{definicao}
	\label{BMonE}
	A stochastic process $(W_t)_{t \geq 0}$ on $\bb G_\Delta$ is called a general Brownian motion on $\bb G_\Delta$ with boundary conditions at the origin if it satisfies the following properties:
	\begin{itemize}
		\item $(W_t)_{t \geq 0}$  is a strong Markov process with values in $\bb G_\Delta$ and has càdlàg trajectories.
		
		\item The sample paths of $(W_t)_{t \geq 0}$ are continuous on the set $\big\{t\geq 0: \lim_{s \to t^-} W_s\linebreak \mbox{ or } W_t \notin\{0+, 0-, \Delta\}\big\}$.
		
		\item The point $\Delta$, called the cemetery, is an absorbing state.
		
		\item Let $\tau_{0+} = \inf\{t\geq 0: W_t = 0+\}$ be the hitting time  of $0+$. For every initial point $x \in [0+, \infty)$, the law of the process $(W_{t\wedge \tau_{0+}})_t$  
		coincides with the law of a standard Brownian motion on $[0, \infty)$ absorbed at $0$. Here, we are identifying $0+$ with $0$. Similarly, for every starting point $x \in (-\infty, 0-]$, the law of $(W_{t\wedge \tau_{0-}})_t$  coincides with that of a Brownian motion on $(-\infty, 0]$ absorbed at $0$. Here, we are identifying $0-$ with $0$ and $\tau_{0-} = \inf\{t\geq 0: W_t = 0-\}$ is the hitting time of $0-$.
	\end{itemize}
\end{definicao}

\begin{teorema}\label{teo:5.5}
	A stochastic process on $\bb G_\Delta$ is a general Brownian motion on $\bb G_\Delta$ with boundary conditions at the origin if and only if its infinitesimal generator is given by
		\begin{align*}
		&\msf{L}f(x) = \frac{1}{2}f''(x)\, \text{ for } x \in \bb G \quad \text{ and }\quad \msf{L}f(\Delta)=0\,,
		\end{align*}
	and its domain $\mf {D}(\msf{L})$ coincides with the set of functions $f \in C(\bb G_\Delta)$ twice differentiable with $f''\in C(\bb G_\Delta)$ such that the following two equations hold
\begin{align*}
	c_1^+f(0+) + a^+\big(f(0+) - f(0-)\big) - c_2^+f'(0+) + \frac{c_3^+}{2}f''(0+)	& \;=\; 0\,, \quad \text{ and } \\ 
	c_1^-f(0-) + a^-\big(f(0-) - f(0+)\big) + c_2^-f'(0-) + \frac{c_3^-}{2}f''(0-) & \;=\; 0\,,
\end{align*}
for some nonnegative constants $a^+, a^-, c_i^+, c_i^-$, $i = 1, 2, 3$. Moreover, $\max\{c_2^+, c_3^+\}>0$ and $\max\{c_2^-, c_3^-\}>0$.
\end{teorema}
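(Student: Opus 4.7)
The plan is to mirror the strategy used for Theorem~\ref{thm:general_R}, splitting the argument into necessity (from a general Brownian motion on $\bb G_\Delta$ to the stated generator and domain) and sufficiency (construction of the process via Hille--Yosida from the given operator). The essential new ingredient with respect to the real-line setting is the presence of two distinct boundary points $0+$ and $0-$, between which the process may \emph{snap}; this is what produces the coupling terms $a^\pm(f(0+)-f(0-))$ in the two boundary conditions.

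\textbf{Necessity.} By Definition~\ref{BMonE} and the strong Markov property, the process evolves as standard Brownian motion on each closed half-line until it first hits $\{0+, 0-\}$, so all nontrivial behavior is concentrated at these two points. I would therefore apply Dynkin's characterization of generators, see \cite[Theorem~3.2.29]{Knight}, to the analogue of the exit measure \eqref{eq:exitmeas} at each of $0+$ and $0-$, in parallel with the argument of Section~\ref{s3}. At $0+$, the exit measure from a small neighborhood of $0+$ decomposes into four components, corresponding to: (i) continuous excursions into $[0+,\infty)$, parametrized by $c_2^+$; (ii) a jump to $0-$, parametrized by $a^+$; (iii) a jump to the cemetery $\Delta$, parametrized by $c_1^+$; and (iv) a holding (sticky) contribution, parametrized by $c_3^+$. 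Computing the limits of the corresponding integrals against a test function $f\in\mf{D}(\msf{L})$ exactly as in the real-line proof yields the first boundary condition. An analogous analysis at $0-$, in which the sign of $f'$ in the flux term flips since the accessible half-line lies on the left, produces the second condition.

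\textbf{Sufficiency.} I would verify the hypotheses of the Hille--Yosida Theorem~\ref{thm:Hille-Yosida} for the operator $(\msf{L}, \mf{D}(\msf{L}))$. The density of $\mf{D}(\msf{L})$ in $C(\bb G_\Delta)$ is where the conditions $\max\{c_2^+, c_3^+\}>0$ and $\max\{c_2^-, c_3^-\}>0$ enter: if both $c_2^+$ and $c_3^+$ vanished, the first equation would collapse into a rigid pointwise identity relating $f(0+)$ and $f(0-)$, cutting out a proper closed subspace of $C(\bb G_\Delta)$. Dissipativity follows from a maximum principle on each half-line together with a sign check at $0\pm$ and $\Delta$ using the nonnegativity of all coefficients. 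For the range condition, I would solve the resolvent equation $\lambda u - \tfrac{1}{2}u''=g$ on each half-line with the two decaying fundamental solutions $e^{\pm\sqrt{2\lambda}\,x}$, giving four free constants that are determined by the two boundary conditions at $0+$ and $0-$. Invertibility of the resulting $4\times 4$ linear system can be established by a sign/positivity argument exploiting $a^\pm, c_i^\pm\geq 0$ together with the $\max$ hypotheses.

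\textbf{Main obstacle.} The genuinely new difficulty with respect to Feller's Theorem~\ref{dgBM} and to Theorem~\ref{thm:general_R} is that the snapping coefficients $a^+$ and $a^-$ couple the two boundary conditions, so neither the density step nor the invertibility of the boundary system in the range condition can be carried out independently half-line by half-line, in contrast with the situation in \cite{Knight}. One must handle both conditions simultaneously and rule out degenerate combinations of the constants that would render the two equations compatible only on a non-dense subspace. This coupling is precisely what allows the process constructed here to encompass the snapping-out Brownian motion of~\cite{Lejay} as a special subfamily, and it is the technical heart of the argument.
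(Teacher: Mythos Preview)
Your proposal is essentially the paper's own approach, and the overall architecture (Dynkin exit-measure analysis at each of $0\pm$ for necessity, Hille--Yosida for sufficiency, with the $\max$ conditions entering through density) is correct. Two points deserve correction or sharpening.

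First, in the range condition you speak of ``four free constants'' and a ``$4\times 4$ linear system''. This is off by a factor of two: on each half-line the decay requirement at $\pm\infty$ already kills one of the two exponential fundamental solutions, so you are left with \emph{one} free constant per half-line and a $2\times 2$ system in $(A,B)$, exactly as the paper writes. Invertibility of this $2\times 2$ system (equivalently, uniqueness of the resolvent) is then handled by the sign argument you allude to: if $d^+ e^{-\sqrt{2\beta}x}$ and $d^- e^{\sqrt{2\beta}x}$ solve the homogeneous problem, one compares the sizes of $d^\pm$ and reaches a contradiction with the nonnegativity of the coefficients and the normalizations~\eqref{constants E +}--\eqref{constants E -}.

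Second, in the necessity step the paper does not pass directly from the exit measure to the four-term boundary condition. It first proves an intermediate version (Lemma~\ref{quaseFellerE}) in which the right-hand side carries residual measures $\nu_j^\pm$ on the open half-lines, and only afterwards eliminates these measures via the locality Lemma~\ref{neighb0E} (any function agreeing with a domain element near $0\pm$ is again in the domain). Your four-component description of the exit measure is morally right---since from $0+$ the process can only exit $[0+,\varepsilon)$ through $\varepsilon$, $0-$, or $\Delta$, the vague limit should place no mass on the open half-lines---but the paper routes the argument through the measure step and the locality lemma, so be prepared to justify why the limiting measures vanish rather than asserting the four-term form outright.
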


	The coefficients appearing in the definition of $\mf{D}(\msf{L})$ in the statement of Theorem~\ref{teo:5.5} admit the following interpretation.
	The coefficient $c_1^+$ (respectively $c_1^-$) is associated with the rate at which the process jumps from $0+$ (respectively $0-$) to the cemetery; that is, it represents a \emph{killing rate}.
	The coefficient $a^+$ (respectively $a^-$) corresponds to the rate at which the process jumps from $0+$ to $0-$ (respectively from $0-$ to $0+$); these are therefore the rates governing the transitions between the two half-lines and consequently determine the skewness of the process.
	The coefficient $c_2^+$ (respectively $c_2^-$) quantifies the reflection strength at $0+$ (respectively $0-$), while $c_3^+$ (respectively $c_3^-$) characterizes the degree of stickiness of the process at $0+$ (respectively $0-$).
	
	It is worth noting that the equations defining the domain of the generator in Theorem~\ref{teo:5.5} are homogeneous and may thus be normalized. This observation is significant, as it implies that stickiness at $0+$ (respectively $0-$) and reflection at $0+$ (respectively $0-$) are not independent phenomena but rather competing effects.
	
	According to \cite[Proposition~1]{Lejay}, the infinitesimal generator of the SNOB on $\bb G$ is given by $\msf{L}f = \frac{1}{2}f''$, with domain $\mf{D}(\msf{L})$ consisting of functions satisfying $f'' \in C(\bb G_\Delta)$  with
	\begin{align*}
		f'(0+) \;=\; f'(0-) \;=\; \frac{\kappa}{2}\big(f(0+) - f(0-)\big)
	\end{align*}
	where $\kappa$ is a positive constant.
	This case corresponds to a particular instance of the family of processes described in Theorem~\ref{teo:5.5}, obtained by setting $a^+ = a^- = \kappa/2$, $c_2^+ = c_2^- = 1$, and $c_1^+ = c_1^- = c_3^+ = c_3^- = 0$.
	In particular, the condition $a^+ = a^-$ implies that the switching between half-lines occurs symmetrically. Allowing instead $a^+ \neq a^-$ yields a Snapping Out Brownian Motion with \emph{skewness}.
	
	The above considerations motivate us to refer to the class of processes obtained in Theorem~\ref{teo:5.5} as the \emph{Skew Sticky Killed Snapping Out Brownian Motion}.
	This constitutes a novel Brownian-type process, which, as demonstrated herein, represents the most general form of Brownian motion on the state space $\bb G_\Delta$.

\begin{observacao}
		\label{obs:Feller}
	We note that the general Brownian motions on $\bb R_\Delta$ and on $\bb G_\Delta$ are defined as strong Markov processes with càdlàg sample paths. In the proofs of Theorems~\ref{thm:general_R} and~\ref{teo:5.5}, we show that, in the present setting, these assumptions imply the Feller property. Conversely, every Feller process admits a càdlàg modification (see \cite[Theorem~6.15]{leGall}). Therefore, in this context, the strong Markov property together with càdlàg paths is equivalent to the Feller property. 
\end{observacao}

\begin{observacao}
Note that if $c_2^+=c_3^+=0$, then the operator defined in Theorem~\ref{teo:5.5} would not define a generator of a Feller process. Indeed, if $c_2^+=c_3^+=0$, then $f(0+)= \tfrac{a^+}{c_1^++a^+}f(0-)$. Hence, the values of $f(0+)$ and $f(0-)$ would be coupled and therefore the domain of $\msf{L}$ would not be dense in $C(\bb G_\Delta)$. By the Hille-Yosida Theorem~\ref{thm:Hille-Yosida} in the appendix $\msf{L}$ would not be the generator of a Feller process. The same observation explains why one must demand that $\max\{c_2^-, c_3^-\}>0$.
\end{observacao}

\subsection{Functional CLT towards the most general BMs}
As a consequence of the precise characterization of the generator in Theorem~\ref{teo:5.5}, we can easily obtain  functional CLTs towards the  most general BMs on $\bb R_\Delta$ and $\bb G_\Delta$. 

\subsubsection{A functional CLT towards the most general BM on \texorpdfstring{$\bb R_\Delta$}{R Delta}}
\begin{figure}[!htb]
	\centering
	\begin{tikzpicture}[scale=1]
		\draw (-5,0)--(5,0);
		
		\draw (2.5,0.6) node[above]{$\displaystyle\frac{1}{2}$};
		\draw (3.5,0.6) node[above]{$\displaystyle\frac{1}{2}$};
		\draw (0.5,0.6) node[above]{$\displaystyle\frac{B_+}{n}$};
		\draw (-0.5,0.6) node[above]{$\displaystyle\frac{B_-}{n}$};

		\centerarc[thick,<-](0.5,0)(35:140:0.6);
		\centerarc[thick,<-](-0.5,0)(140:40:0.6);
		
		\centerarc[thick,->](2.5,0)(40:140:0.6);
		\centerarc[thick,->](3.5,0)(140:40:0.6);


		\filldraw[fill=white] (0,-2.2) circle (.15);
		\draw (0.1,-2.2) node[right]{$\Delta$};
		
		\draw (0,-0.2) node[below]{$\frac{0}{n}$};
		\draw (1,-0.2) node[below]{$\frac{1}{n}$};
		\draw (2,-0.2) node[below]{$\frac{2}{n}$};
		\draw (3,-0.2) node[below]{$\frac{3}{n}$};
		\draw (4,-0.2) node[below]{$\frac{4}{n}$};
		\draw (5.7,0) node[left]{...};

		\draw (-1,-0.2) node[below]{$\frac{-1}{n}$};
		\draw (-2,-0.2) node[below]{$\frac{-2}{n}$};
		\draw (-3,-0.2) node[below]{$\frac{-3}{n}$};
		\draw (-4,-0.2) node[below]{$\frac{-4}{n}$};
		\draw (-5.1,0) node[left]{...};

		\filldraw[fill=white] (0,0) circle (.15);
		\filldraw[fill=white] (1,0) circle (.15);
		\filldraw[fill=white] (2,0) circle (.15);
		\filldraw[ball color=black] (3,0) circle (.15);
		\filldraw[fill=white] (4,0) circle (.15);

		\filldraw[fill=white] (-1,0) circle (.15);
		\filldraw[fill=white] (-2,0) circle (.15);
		\filldraw[fill=white] (-3,0) circle (.15);
		\filldraw[fill=white] (-4,0) circle (.15);
		
		\draw[very thick,->] (0,-0.9) -- node [midway, anchor = west]{$\displaystyle\frac{A}{n^2}$} (0,-1.9);
		
	\end{tikzpicture}
	\caption{The jump rates for the \textit{boundary random walk} on $\bb R_{n, \Delta}$ are $n^2$ times the ones shown in the picture.}
	\label{fig:RW_1}
\end{figure}
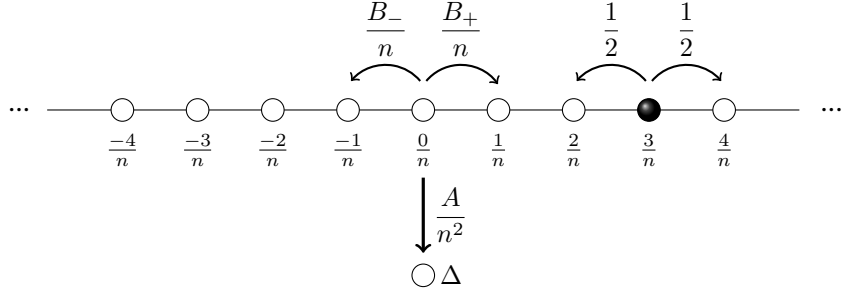

Consider the continuous time random walk on the state space
$\bb R_{n, \Delta} := \{\ldots, \frac{-2}{n}, \frac{-1}{n}, \frac{0}{n}, \frac{1}{n}, \frac{2}{n}, \ldots\} \cup \{\Delta\}$ whose jump rates are described in Figure~\ref{fig:RW_1}.
That is, its generator $\msf{L}_n$  is  given by
\begin{equation}
	\label{RWgm_R}
	\begin{split}
		&\msf{L}_nf(x)=\\
		&
		n^2\times \begin{cases}
			\!\dfrac{1}{2}\Big[f\big(x\!+\!\pfrac{1}{n}\big)\!-\!f(x)\Big]+\dfrac{1}{2}\Big[f\big(x\!-\!\pfrac{1}{n}\big)\!-\! f(x)\Big], \quad  x \notin \{\Delta, \frac{0}{n}\}  \vspace{5pt}\\
			\!\dfrac{A}{n^2}\Big[f\big(\Delta\big)\!-\!f(\pfrac{0}{n})\Big]\!+\!\dfrac{B_+}{n}\Big[f\big(\pfrac{1}{n}\big)\!-\!f\big(\pfrac{0}{n}\big)\Big]
			\!+\!\dfrac{B_-}{n^2}\Big[f\big(\pfrac{-1}{n}\big)\!-\!f\big(\pfrac{0}{n}\big)\Big],  x\!=\! \frac{0}{n}\vspace{5pt}\\
			0,\quad x =\Delta
		\end{cases}
	\end{split}
\end{equation}
with domain  $\mf D(\msf{L}_n) = \big\{f: \bb R_{n, \Delta} \to \bb R \text{ s.t. }\lim_{k\to \pm \infty} f(k) = 0 \text{ and } f(\Delta)=0\big\}$.
\begin{teorema}\label{TCL_R}
	Fix $u\in \bb R\backslash \{0\}$ and let $\{X_n(t): t\geq 0\}$ be the boundary random walk whose generator is $\msf L_n$ of parameters $A, B_+, B_-\geq 0$, starting from the point $\frac{\lfloor un \rfloor}{n}\in \bb R_{n,\Delta}\subset \bb R_{\Delta}$. Then the sequence of processes $\{X_n(t): t\geq 0\}$ converges weakly to  $\{X(t):t\geq 0\}$ in the Skorohod topology of $\msf{D}_{\bb{R}_{\Delta}}[0,\infty)$, where $X$ is the \textit{Skew Sticky Killed  Brownian Motion} on $\bb{R}_{\Delta}$ characterized in Theorem~\ref{thm:general_R}, starting from the point $u$ and whose  parameters are 
	\begin{align}\label{eq: gerador_R}
		 c_1= A\,, \quad c^+_2 = B^+\,, \quad  c^-_2 = B_-\,, \quad c_3 = 1\,. 
	\end{align}
\end{teorema}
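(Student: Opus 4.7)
The strategy is to apply a standard Trotter--Kurtz/Ethier--Kurtz semigroup convergence theorem for sequences of Markov processes (see, e.g., Ethier-Kurtz, Chapter~4, or Kallenberg, Theorem~17.25). This reduces the desired weak convergence $X_n\Rightarrow X$ in the Skorohod topology of $\msf D_{\bb R_\Delta}[0,\infty)$ to two ingredients: (i) convergence of initial states, which holds trivially because $\lfloor un\rfloor/n\to u$ in $\bb R_\Delta$; and (ii) generator convergence on a core of $\msf L$. Concretely, for every $f$ in a core of $\msf L$ we need to exhibit $f_n:\bb R_{n,\Delta}\to\bb R$ with $\|f_n-f|_{\bb R_{n,\Delta}}\|_\infty\to 0$ and $\|\msf L_n f_n-(\msf L f)|_{\bb R_{n,\Delta}}\|_\infty\to 0$. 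By Theorem~\ref{thm:general_R} together with Remark~\ref{obs:Feller}, the operator $\msf L$ generates a Feller semigroup, so this framework applies.

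The first step is to exhibit a convenient core $\mc C\subset\mf D(\msf L)$. Take $\mc C$ to consist of those $f\in\mf D(\msf L)$ that are $C^\infty$ on $\bb R\setminus\{0\}$, vanish outside some compact subset of $\bb R$, and admit finite one-sided derivatives of all orders at $0$, while still satisfying the boundary relation $c_1 f(0)+c^-_2 f'(0-)-c^+_2 f'(0+)+\tfrac{c_3}{2}f''(0+)=0$ with parameters proportional to those prescribed in \eqref{eq: gerador_R}. Density of $\mc C$ in $\mf D(\msf L)$ for the graph norm follows from a standard truncation and mollification procedure applied separately on $(-\infty,0)$ and $(0,\infty)$, combined with a one-parameter adjustment (for instance of the cubic Taylor coefficient at $0+$) to preserve the boundary identity; hence $\mc C$ is a core.

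The most delicate step is the generator convergence on this core, for which we set $f_n=f|_{\bb R_{n,\Delta}}$. For $x\in\bb R_{n,\Delta}\setminus\{0,\Delta\}$, a fourth-order Taylor expansion gives $\msf L_n f(x)=\tfrac12 f''(x)+O(n^{-2})$ uniformly in $x$ (at $x=\pm 1/n$ only one-sided expansion is needed, since $f$ is smooth on each side). At $\Delta$ both sides vanish. The critical point is $x=0$: a second-order Taylor expansion of $f(\pm 1/n)$ around $0\pm$, inserted into the explicit jump rates of \eqref{RWgm_R}, yields an expression of the form $\alpha f(0)+\beta_+ f'(0+)+\beta_- f'(0-)+O(n^{-1})$, and the leading coefficients $(\alpha,\beta_+,\beta_-)$ are such that the boundary relation satisfied by $f\in\mc C$ with parameters \eqref{eq: gerador_R} forces this leading part to equal $\tfrac12 f''(0+)=\msf Lf(0)$. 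Taking suprema over $\bb R_{n,\Delta}$ delivers the required uniform convergence.

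The principal obstacle is precisely this matching at $x=0$: pointwise convergence $\msf L_n f(0)\to\msf Lf(0)$ is not a purely discretisation fact but depends crucially on the alignment between the rates $(A,B_+,B_-)$ and the limiting boundary parameters $(c_1,c^+_2,c^-_2,c_3)\propto(A,B_+,B_-,1)$. This is exactly the reason the core $\mc C$ has to be chosen inside $\mf D(\msf L)$ rather than inside $C^2(\bb R_\Delta)$. Once uniform generator convergence on $\mc C$ is established, the cited semigroup convergence theorem yields the weak convergence of $\{X_n(t):t\geq 0\}$ to $\{X(t):t\geq 0\}$ in $\msf D_{\bb R_\Delta}[0,\infty)$.
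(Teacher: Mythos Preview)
Your argument is correct and follows the same overall strategy as the paper: invoke the Ethier--Kurtz semigroup convergence theorem, take $f_n$ to be the restriction of $f$, check the discrete Laplacian converges uniformly away from $0$, and use the boundary identity to match $\msf L_n f(0)\to \tfrac12 f''(0+)$.

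The one place where you work harder than necessary is the choice of core. You build a subclass $\mc C\subset\mf D(\msf L)$ of compactly supported, piecewise $C^\infty$ functions and then have to argue (via truncation, mollification, and a correction at $0$) that $\mc C$ is graph-dense. The paper simply takes $\mc C=\mf D(\msf L)$. This works because for any $f\in\mf D(\msf L)$ one already has $f''\in C(\bb R_\Delta)$, hence $f''$ is \emph{uniformly} continuous; a second-order Taylor expansion with integral remainder then gives
\[
\Big|\,n^2\big[f(x+\tfrac1n)+f(x-\tfrac1n)-2f(x)\big]-f''(x)\Big|\;\le\;\sup_{|s|\le 1/n}\big|f''(x+s)-f''(x)\big|\longrightarrow 0
\]
uniformly in $x$, without any need for fourth-order control. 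So the detour through a smooth core (and the accompanying density argument) can be dropped entirely. Your computation at $x=0$ is exactly the one the paper carries out.
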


\subsubsection{A functional CLT towards the most general BM on \texorpdfstring{$\bb G_\Delta$}{G Delta}}
 Consider the continuous time random walk on the state space
$\bb G_{n, \Delta} := \{\ldots, \frac{-2}{n}, \frac{-1}{n}, \frac{0-}{n}\} \cup \{\Delta\} \cup \{\frac{0+}{n}, \frac{1}{n}, \frac{1}{n}, \ldots\}$ whose jump rates are described in Figure~\ref{fig:RW}.
\begin{figure}[!htb]
	\centering
	\begin{tikzpicture}[scale=1]
		\draw (1,0)--(5,0);
		\draw (-1,0)--(-5,0);
		
		\draw (3.5,0.6) node[above]{$\displaystyle\frac{1}{2}$};
		\draw (4.5,0.6) node[above]{$\displaystyle\frac{1}{2}$};
		\draw (1.5,0.6) node[above]{$\displaystyle\frac{B_+}{n}$};
		\draw (-1.5,0.6) node[above]{$\displaystyle\frac{B_-}{n}$};
		\draw (0,0.5) node[above]{$\displaystyle\frac{C_+}{n^{2}}$};
		\draw (0,-0.5) node[below]{$\displaystyle\frac{C_-}{n^{2}}$};
		
		\centerarc[thick,<-](1.5,0)(30:140:0.6);
		\centerarc[thick,->](-1.5,0)(30:140:0.6);

		\centerarc[thick,<-](0,1)(-55:-125:1.5);	
		\centerarc[thick,->](0,-1)(55:125:1.5);
		
		\centerarc[thick,->](3.5,0)(40:150:0.6);
		\centerarc[thick,<-](4.5,0)(30:140:0.6);
		
		\filldraw[fill=white] (0,-2.2) circle (.15);
		\draw (0,-2.4) node[below]{$\Delta$};
		
		\draw (1,-0.2) node[below]{$\frac{0+}{n}$};
		\draw (2,-0.2) node[below]{$\frac{1}{n}$};
		\draw (3,-0.2) node[below]{$\frac{2}{n}$};
		\draw (4,-0.2) node[below]{$\frac{3}{n}$};
		\draw (5,-0.2) node[below]{$\frac{4}{n}$};
		\draw (5.7,0) node[left]{...};
		
		\draw (-1,-0.2) node[below]{$\frac{0-}{n}$};
		\draw (-2,-0.2) node[below]{$\frac{-1}{n}$};
		\draw (-3,-0.2) node[below]{$\frac{-2}{n}$};
		\draw (-4,-0.2) node[below]{$\frac{-3}{n}$};
		\draw (-5,-0.2) node[below]{$\frac{-4}{n}$};
		\draw (-5.1,0) node[left]{...};

		\filldraw[fill=white] (1,0) circle (.15);
		\filldraw[fill=white] (2,0) circle (.15);
		\filldraw[fill=white] (3,0) circle (.15);
		\filldraw[ball color=black] (4,0) circle (.15);
		\filldraw[fill=white] (5,0) circle (.15);
		
		\filldraw[fill=white] (-1,0) circle (.15);
		\filldraw[fill=white] (-2,0) circle (.15);
		\filldraw[fill=white] (-3,0) circle (.15);
		\filldraw[fill=white] (-4,0) circle (.15);
		\filldraw[fill=white] (-5,0) circle (.15);
		
		\draw[thick,->] (1,-0.9) -- node [midway, anchor = north west]{$\displaystyle\frac{A_+}{n^2}$} (0.1,-2);
		\draw[thick,->] (-1,-0.9) -- node [midway, anchor = north east]{$\displaystyle\frac{A_-}{n^2}$} (-0.1,-2);
	\end{tikzpicture}
	\caption{The jump rates for the \textit{boundary random walk} on $\bb G_{n, \Delta}$ are $n^2$ times the ones shown in the picture.}
	\label{fig:RW}
\end{figure}
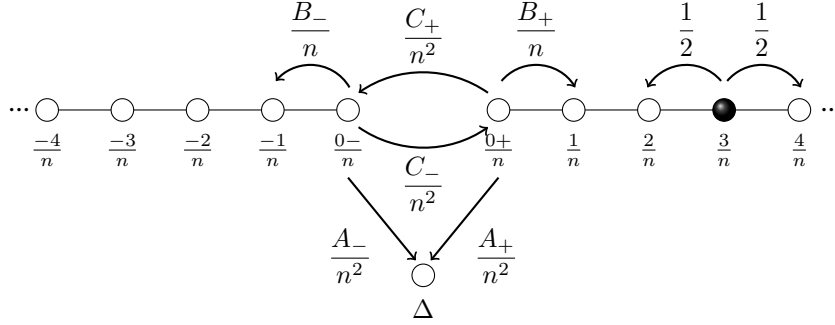

That is, its generator $\msf{L}_n$  is  given by
\begin{equation}
	\label{RWgm}
	\begin{split}
			&\msf{L}_nf(x)\;=\;\\
		&
		n^2\times \begin{cases}
			\!\dfrac{1}{2}\Big[f\big(x\!+\!\pfrac{1}{n}\big)\!-\!f(x)\Big]+\dfrac{1}{2}\Big[f\big(x\!-\!\pfrac{1}{n}\big)\!-\! f(x)\Big], \quad  x \notin \{\Delta, \frac{0+}{n}, \frac{0-}{n}\}  \vspace{5pt}\\
			\!\dfrac{A_+}{n^2}\Big[f\big(\Delta\big)\!-\!f(\pfrac{0+}{n})\Big]\!+\!\dfrac{B_+}{n}\Big[f\big(\pfrac{1}{n}\big)\!-\!f\big(\pfrac{0+}{n}\big)\Big]
			\!+\!\dfrac{C_+}{n^2}\Big[f\big(\pfrac{0-}{n}\big)\!-\!f\big(\pfrac{0+}{n}\big)\Big],  x\!=\! \frac{0+}{n}\vspace{5pt}\\
			\!\dfrac{A_-}{n^2}\Big[f\big(\Delta\big)\!-\!f(\pfrac{0-}{n})\Big]\!+\!\dfrac{B_-}{n}\Big[f\big(\pfrac{-1}{n}\big)\!-\!f\big(\pfrac{0-}{n}\big)\Big]
			\!+\!\dfrac{C_-}{n^2}\Big[f\big(\pfrac{0+}{n}\big)\!-\!f\big(\pfrac{0-}{n}\big)\Big],  x\!=\! \frac{0-}{n}\vspace{5pt}\\
			0,\quad x =\Delta
		\end{cases}
	\end{split}
\end{equation}
with domain  $\mf D(\msf{L}_n) = \big\{f: \bb G_{n, \Delta} \to \bb R \text{ s.t. }\lim_{k\to \pm \infty} f(k) = 0 \text{ and } f(\Delta)=0\big\}$.
\begin{teorema}\label{TCL}
	Fix $u\in \bb G\backslash \{0+, 0-, \Delta\}$ and let $\{X_n(t): t\geq 0\}$ be the boundary random walk whose generator is $\msf L_n$ of parameters $A_+, A_-, B_+, B_-, C_+, C_-\geq 0$, starting from the point $\frac{\lfloor un \rfloor}{n}\in \bb G_{n,\Delta}\subset \bb G_{\Delta}$. Then the sequence of processes $\{X_n(t): t\geq 0\}$ converges weakly to  $\{X(t):t\geq 0\}$ in the Skorohod topology of $\msf{D}_{\bb{G}_{\Delta}}[0,\infty)$, where $X$ is the \textit{Skew Sticky Killed Snapping Out Brownian Motion} on $\bb{G}_{\Delta}$ characterized in Theorem~\ref{teo:5.5}, starting from the point $u$ and whose  parameters are 
	\begin{align}
		&c_1^+ = A_+\,,\quad  c_2^+ = B_+\quad  c_3^+ = 1\,,\quad  a^+ = C_+ \label{eq: rel_1} \\
		&c_1^- = A_-\,,\quad  c_2^- = B_-\quad  c_3^- = 1\,,\quad  a^- = C_-\,. 
	\end{align}
\end{teorema}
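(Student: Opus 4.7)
The plan is to apply the semigroup-convergence machinery of Ethier--Kurtz (Theorem~1.6.1 for semigroups, followed by Theorem~4.2.5 for weak convergence of processes) and deduce the functional CLT from convergence of the generators on a suitable core. Since Theorem~\ref{teo:5.5} combined with the Hille--Yosida Theorem already furnishes the limiting Feller semigroup associated with $\msf{L}$, the argument reduces to two items: (i)~exhibit, for every $f$ in a core $\mc{C}\subset \mf{D}(\msf{L})$, a sequence of discretisations $f_n:\bb G_{n,\Delta}\to\bb R$ with $\|f_n-f\|_\infty\to 0$ and $\|\msf{L}_n f_n-\msf{L}f\|_\infty\to 0$; and (ii)~check that the initial points converge, $\lfloor un\rfloor/n\to u$ in $\bb G_\Delta$. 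The latter is immediate, so the heart of the proof is (i).

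I would take $\mc{C}$ to consist of those $f\in\mf{D}(\msf{L})$ which are in addition $C^4$ on each closed half-line with bounded derivatives, and define $f_n$ as the restriction of $f$ to $\bb G_{n,\Delta}$, with the conventions $f_n(0\pm/n):=f(0\pm)$ and $f_n(\Delta):=0$. Uniform convergence $f_n\to f$ is immediate from the uniform continuity of $f$ on the compact space $\bb G_\Delta$. The generator computation then splits into three regimes. At an interior site $x=k/n\notin\{0\pm/n,\Delta\}$, the centred-difference Taylor expansion gives
\begin{equation*}
	\msf{L}_n f_n(x) \;=\; \tfrac{n^2}{2}\bigl[f(x+\tfrac1n)-2f(x)+f(x-\tfrac1n)\bigr] \;=\; \tfrac12 f''(x) + O(n^{-2}\|f^{(4)}\|_\infty),
\end{equation*}
uniformly in $x$. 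At $x=0+/n$, using $f(\Delta)=0$ together with $f(1/n)=f(0+)+\tfrac1n f'(0+)+\tfrac{1}{2n^2}f''(0+)+o(n^{-2})$, one gets
\begin{equation*}
	\msf{L}_n f_n(0+/n) \;=\; -A_+ f(0+) + B_+ f'(0+) + \tfrac{B_+}{2n} f''(0+) + C_+\bigl(f(0-)-f(0+)\bigr) + o(1).
\end{equation*}
Invoking the first boundary equation of Theorem~\ref{teo:5.5} with the identifications~\eqref{eq: rel_1}, namely $c_1^+=A_+$, $c_2^+=B_+$, $c_3^+=1$, $a^+=C_+$, yields
\begin{equation*}
	-A_+ f(0+) + B_+ f'(0+) + C_+\bigl(f(0-)-f(0+)\bigr) \;=\; \tfrac12 f''(0+) \;=\; \msf{L}f(0+),
\end{equation*}
so $\msf{L}_n f_n(0+/n)\to\msf{L}f(0+)$. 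The analogous computation at $0-/n$ uses the second boundary equation and the identifications for $c_1^-,c_2^-,c_3^-,a^-$, and yields $\msf{L}_n f_n(0-/n)\to\msf{L}f(0-)$.

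Combining the three regimes gives $\|\msf{L}_n f_n-\msf{L}f\|_\infty\to 0$ for every $f\in\mc{C}$, and the Ethier--Kurtz theorems then deliver the weak convergence $X_n\Rightarrow X$ in the Skorohod space $\msf{D}_{\bb G_\Delta}[0,\infty)$; tightness is automatic for this approach since the limit is Feller. The main obstacle is not the Taylor bookkeeping at the boundary sites, which is forced to match by the very form of the two domain equations in Theorem~\ref{teo:5.5}, but rather verifying that the smoother subclass $\mc{C}$ is indeed a core for $\msf{L}$: the domain only guarantees $f''\in C(\bb G_\Delta)$, which alone does not give the uniform $O(n^{-2})$ remainder needed in the interior. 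The standard remedy is to approximate an arbitrary $f\in\mf{D}(\msf{L})$ in the graph norm by a smoother $\tilde f\in\mc{C}$ matching the boundary data $(f(0\pm),f'(0\pm),f''(0\pm))$, via a mollification modified near $0\pm$ so as to preserve the two homogeneous boundary relations. Alternatively, one may invoke the abstract fact that any subspace of $\mf{D}(\msf{L})$ dense in $C(\bb G_\Delta)$ and invariant under the resolvent of $\msf{L}$ is automatically a core. This is the one genuinely technical ingredient; the remainder of the argument is a routine diffusion approximation.
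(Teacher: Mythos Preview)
Your approach is the same as the paper's---Ethier--Kurtz semigroup convergence with $f_n$ the restriction of $f$---and the boundary computations at $0\pm/n$ match exactly. The only difference is that you introduce an unnecessary complication: the paper takes the core $\mc C$ to be the \emph{full} domain $\mf D(\msf L)$, not a $C^4$ subclass, so the question of whether a smaller $\mc C$ is a core never arises.

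Your worry that ``$f''\in C(\bb G_\Delta)$ alone does not give the uniform $O(n^{-2})$ remainder'' is misplaced, because no rate is required---only $o(1)$ uniformly. Since $\bb G_\Delta$ is compact, $f''$ is uniformly continuous, and for any such $f$ the integral form of the second difference gives
\[
\frac{n^2}{2}\Big[f\big(x+\tfrac1n\big)-2f(x)+f\big(x-\tfrac1n\big)\Big]-\tfrac12 f''(x)
\;=\;\frac{n^2}{2}\int_0^{1/n}\!\!\int_{-s}^{s}\big(f''(x+t)-f''(x)\big)\,dt\,ds,
\]
whose absolute value is bounded by the modulus of continuity of $f''$ at scale $1/n$, uniformly in $x$. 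This is all the paper uses. So you can drop the $C^4$ hypothesis, take $\mc C=\mf D(\msf L)$, and the ``one genuinely technical ingredient'' you flag simply disappears.
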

\begin{figure}[!htb]
	\centering
	\includegraphics[width=1\linewidth]{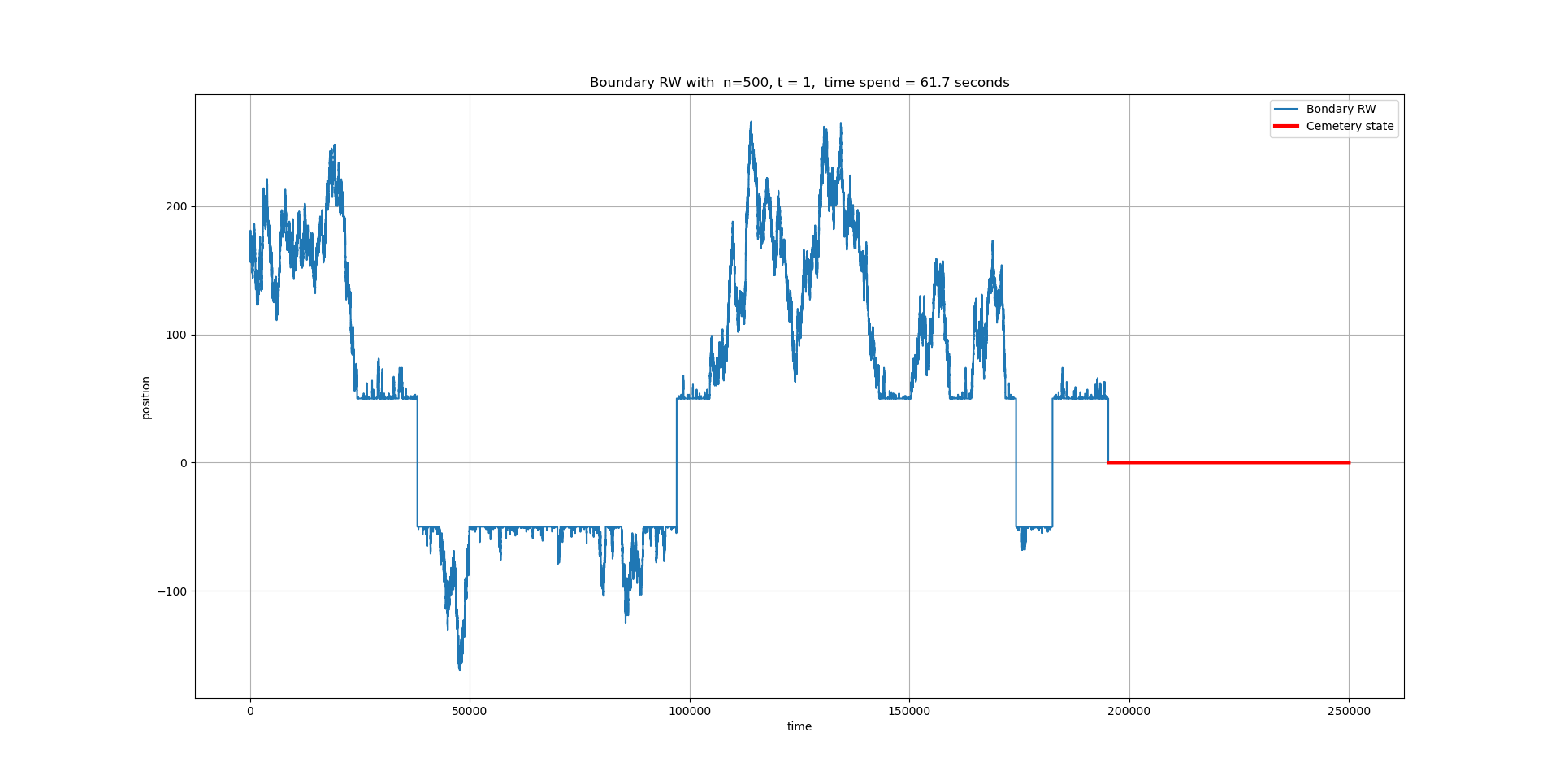}
	\caption{Simulation of the Boundary Random Walk with parameters $A_+ = 0.25$, $A_- = 0.25$, $B_+ = 2$, $B_- = 2$, $C_+ = 6$, and 
		$C_- = 4$. The macroscopic time is $t = 1$, the macroscopic initial position is $u = 1/3$ and the discrete parameter is $n = 500$. Note that the walk switches between half-lines and eventually goes to the cemetery. This is highlighted in red. Although not so evident from the picture, since $C_+> C_-$, the walk has a preference to stay in the negative half-line. The stickiness at the points $0+$ and $0-$ is visible.}
	\label{fig:figure1}
\end{figure}

Similar results to those of \cite{EFJP} can be achieved on Boundary Random Walk defined in \eqref{RWgm}, including Berry-Essen estimates and  more general jump rates. Since this is not the main topic of this paper, we do not enter into details. In Figure~\ref{fig:figure1} we exhibit  a computational simulation of the Boundary RW. The code written to produce this picture is available in  \cite{Two_Half}.
    
\section{The most general BM on \texorpdfstring{$\R_{\Delta}$}{R Delta} -- Proof of Theorem~\ref{thm:general_R}}\label{s3}

 We start by recalling a key property of strong Markov processes. To that end, we use the conventions that an exponentially distributed random variable $X$ with parameter zero is almost surely equal to infinity and an exponentially distributed random variable $Y$ with parameter infinity is almost surely equal to zero.
\begin{proposicao}[see Propositions~III.2.19 and III.3.13 of \cite{RevuzYor}]\label{prop:jump time}
	Let $\mc W$ be a strong Markov process on the Polish space $E$. Denote by $P_x$ the law of the process starting from $x\in E$
and let $\sigma_x = \inf  \{t > 0 : \mc W_t \neq x\}$ be the waiting time of $x$. Then there exists $\lambda(x) \in [0, \infty]$ such that $\sigma_x$ is exponentially distributed with parameter $\lambda(x)$ under $P_x$. Moreover, if $0 < \lambda(x) < \infty$, then $P_x(\mc W_{\sigma_x} \neq x) = 1$.
\end{proposicao}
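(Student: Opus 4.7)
The plan is to treat the two assertions separately: the exponential distribution of $\sigma_x$ follows from a classical memoryless argument based on the simple Markov property, whereas the statement $P_x(\mc W_{\sigma_x}\neq x)=1$ is established by contradiction using the strong Markov property at $\sigma_x$.

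For the first part, set $F(t) = P_x(\sigma_x > t)$. I would first note that $\sigma_x$ is a stopping time: since $E$ is Polish and hence Hausdorff, the singleton $\{x\}$ is closed and $E\setminus\{x\}$ is open, so for a càdlàg trajectory starting at $x$ the first exit time from $\{x\}$ is measurable with respect to the right-continuous natural filtration $(\mathcal F_t)$. By the infimum definition and the convention $\mc W_0 = x$ under $P_x$, the event $\{\sigma_x > t\}$ coincides a.s.\ with $\{\mc W_u = x \text{ for all } u \in [0,t]\}$, and in particular $\mc W_t = x$ on this event. Conditioning on $\mathcal F_t$ and applying the Markov property then yields
\[
F(t+s) \;=\; \E_x\bigl[\one_{\{\sigma_x > t\}}\cdot P_{\mc W_t}(\sigma_x > s)\bigr] \;=\; F(t)\,F(s).
\]
Combined with the monotonicity of $F$ and its right-continuity (inherited from the càdlàg property of $\mc W$), the standard Cauchy functional-equation argument forces $F(t) = e^{-\lambda(x) t}$ for some $\lambda(x) \in [0, \infty]$, with the endpoints $\lambda(x)=0$ and $\lambda(x)=\infty$ matching the conventions $\sigma_x \equiv \infty$ and $\sigma_x \equiv 0$.

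For the second assertion, assume $0 < \lambda(x) < \infty$, so that $\sigma_x \in (0,\infty)$ a.s.\ under $P_x$, and argue by contradiction: suppose $P_x(\mc W_{\sigma_x} = x) > 0$. Apply the strong Markov property at the stopping time $\sigma_x$: conditional on $\mathcal F_{\sigma_x}$, the shifted process $\tilde{\mc W}_u := \mc W_{\sigma_x + u}$ has law $P_{\mc W_{\sigma_x}}$. On the event $\{\mc W_{\sigma_x} = x\}$ this law is $P_x$, so the new waiting time $\tilde\sigma := \inf\{u > 0 : \tilde{\mc W}_u \neq x\}$ is again exponentially distributed with parameter $\lambda(x)$; in particular $\tilde\sigma > 0$ a.s. This forces $\mc W_{\sigma_x + u} = x$ for all $u \in [0, \tilde\sigma)$. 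Combined with $\mc W_u = x$ throughout $[0, \sigma_x)$ (from the infimum definition of $\sigma_x$) and $\mc W_{\sigma_x} = x$ (by assumption), one obtains $\mc W_u = x$ on all of $[0, \sigma_x + \tilde\sigma)$ with $\tilde\sigma > 0$, contradicting $\sigma_x = \inf\{u > 0 : \mc W_u \neq x\}$.

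The main technical point I expect to pay attention to is the legitimate application of the (strong) Markov property at $\sigma_x$: one must verify that $\sigma_x$ is a bona fide stopping time with respect to a suitable completion of the natural filtration, and that the strong Markov property of $\mc W$ is phrased with respect to this completed filtration so that $P_{\mc W_{\sigma_x}}$ makes sense on $\mathcal F_{\sigma_x}$-measurable events. In the Polish/càdlàg framework of the statement these checks are standard, and are precisely the content of the Revuz--Yor propositions cited; the argument above essentially repackages those two tools.
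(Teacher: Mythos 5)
Your argument is correct and is essentially the standard proof of Propositions~III.2.19 and III.3.13 of Revuz--Yor; the paper itself offers no proof of this proposition beyond that citation, so your write-up simply reconstructs the cited argument (multiplicativity of $t\mapsto P_x(\sigma_x>t)$ via the Markov property, then the strong Markov property at $\sigma_x$ to rule out $\mc W_{\sigma_x}=x$). The technical caveats you flag (that $\sigma_x$ is a stopping time for the right-continuous/completed filtration, and the use of Blumenthal's zero--one law to pin down $P_x(\sigma_x>0)\in\{0,1\}$) are exactly the points the reference handles, so nothing is missing in substance.
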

Note that in the case $0 < \lambda(x) < \infty$, the statement above implies that the process $\mc W$ must leave $x$ by a jump.\bigskip

In this section we will  denote by $W$ a general Brownian motion on $\R_{\Delta}$ with boundary conditions at the origin.
By Proposition~\ref{prop:jump time}, we have that $T = \inf \{t > 0 : W_t \neq 0\}$ is exponentially distributed with parameter $\lambda \in [0, \infty]$ under $P_0$. We distinguish between three cases.\bigskip

 \textbf{Case 1:} $0 < \lambda < \infty$. 	
	Due to Proposition~\ref{prop:jump time} the process leaves $0$ by a jump, so the continuity of paths on $\R$ forces $W$ to jump to $\Delta$. Hence, $W_t = \Delta$ for all $t \geq T$ in this case.
	
 \textbf{Case 2:} $\lambda = \infty$.	
	Here, $P_0(T = 0) = 1$, so the process leaves $0$ at once. Since $P_0(W_0 = 0) = 1$ and the paths are right-continuous, the process cannot immediately jump from $0$ to $\Delta$, otherwise the process would be left-continuous. We thus conclude that the process leaves zero immediately, but not to $\Delta$.
	
\textbf{Case 3:} $\lambda = 0$.	
	In this situation, we have $T = \infty$ a.s., which means that $0$ is a trap.\bigskip

We now prove in the next proposition that a general Brownian motion on $\R_\Delta$ with boundary conditions at the origin is a Feller process on $C(\R_{\Delta})$. In the proof, we will write $\tau^1$ for the first jump of the paths of $W_t$. In particular, $\tau^1 = \tau_{\Delta} = \inf \{s \geq 0 : W_s = \Delta\}$. Some arguments below could be slightly simplified noting that $f(W_t)$ vanishes for $f\in C(\R_{\Delta})$, if $\tau^1 \leq t$ (more precisely, the third term in Equation~\eqref{eq:example}). However, we do not do this in order to make the arguments easier to adapt to the setting of Section~\ref{section:BMG}. 
\begin{proposicao}
	\label{BMFeller}
	Every general Brownian motion $W$ on $\R_{\Delta}$ with boundary conditions at the origin has a Feller semigroup on $C(\R_{\Delta})$.
\end{proposicao}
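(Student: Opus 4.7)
The plan is to verify the two non-trivial defining properties of a Feller semigroup: that $P_t f \in C(\R_\Delta)$ for every $f \in C(\R_\Delta)$, and that $\|P_t f - f\|_\infty \to 0$ as $t \to 0^+$. Positivity, contractivity and the semigroup property come for free from the strong Markov property and the convention $f(\Delta)=0$. The central tool will be a decomposition via the first hitting time $\tau_0$: by strong Markov and the defining property that $(W_{t\wedge\tau_0})$ is a standard Brownian motion absorbed at $0$,
\[
P_t f(x) \;=\; Q_t f(x) \;+\; \int_0^t u(t-s)\,\mu_x(\dd s),
\]
where $Q_t$ is the semigroup of Brownian motion killed at zero (classically Feller), $u(s) := P_s f(0)$, and $\mu_x$ is the law of $\tau_0$ under $P_x$, with explicit density $|x|(2\pi s^3)^{-1/2} e^{-x^2/2s}$ for $x \ne 0$ and $\mu_0 = \delta_0$.

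Next I would analyse $u$ in each of the three cases for the waiting-rate $\lambda$ at $0$ given by Proposition~\ref{prop:jump time}. In Case~1 ($0<\lambda<\infty$), the only possible jump from $0$ is to $\Delta$, so $u(s) = f(0) e^{-\lambda s}$; in Case~3 ($\lambda=0$), zero is a trap and $u(s) = f(0)$. Both are manifestly continuous. In Case~2 ($\lambda = \infty$) the process leaves $0$ immediately but not to $\Delta$. Right-continuity of $u$ follows from right-continuity of the paths together with bounded convergence. For left-continuity I would apply the strong Markov property at the stopping times $\eta_\epsilon := \inf\{t > 0 : |W_t| > \epsilon\}$, which are a.s.\ strictly positive and decrease to $0$ as $\epsilon \to 0$: writing
\[
u(s) \;=\; \E_0\bigl[P_{s-\eta_\epsilon} f(W_{\eta_\epsilon})\,\one_{\eta_\epsilon \le s}\bigr] + \E_0\bigl[f(W_s)\,\one_{\eta_\epsilon > s}\bigr]
\]
reduces the local regularity of $u$ to the Feller regularity of $Q_t$ on $\R \setminus \{0\}$, plus a remainder that vanishes as $\epsilon \to 0$.

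With continuity of $u$ in hand, I would conclude: continuity of $P_t f$ on $\R \setminus \{0\}$ by dominated convergence using the smoothness of $x \mapsto p_x(s)$; continuity at $x=0$ because $Q_t f(x) \to 0$ and $\mu_x \Rightarrow \delta_0$ as $x \to 0$, so the integral converges to $u(t) = P_t f(0)$; and vanishing at infinity because $\PP_x(\tau_0 \le t) \to 0$ as $|x| \to \infty$ while $Q_t f$ already vanishes at infinity. For strong continuity $\|P_t f - f\|_\infty \to 0$, I would combine pointwise convergence $P_t f(x) \to f(x)$ (from $W_t \to W_0$ a.s.\ and bounded convergence) with the vanishing at infinity just proved to reduce the supremum to a compact set, and then use uniform continuity of $f$ together with the above decomposition to control the error uniformly.

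The main obstacle will be Case~2: proving the regularity of $u$, and hence of $P_t f$ near $x=0$, without yet having an explicit description of the generator, using only that $W$ performs Brownian excursions away from $0$. I expect the stopping-time argument sketched above to suffice, but one must be careful when passing to $\epsilon \to 0$ so that the remainder term $\E_0[f(W_s)\one_{\eta_\epsilon > s}]$ is controlled \emph{uniformly} in $s$ on a small interval, which is what eventually upgrades pointwise estimates to the uniform convergence required by the Feller property.
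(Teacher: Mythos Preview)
Your approach is genuinely different from the paper's. You use a first-passage decomposition at $\tau_0$ and reduce everything to regularity of the scalar function $u(s)=P_sf(0)$; the paper instead builds an explicit coupling of $(W^y)_{y\in\R}$ by running all of them with the \emph{same} Brownian increment until each hits $0$ and then attaching the \emph{same} copy of $W^0$, from which continuity of $x\mapsto P_tf(x)$ follows by a direct dominated-convergence comparison of $W^y$ and $W^x$. The coupling route never needs to analyse $u$ at all, and in particular completely sidesteps your Case~2, which is exactly the delicate point of your sketch.

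Your Case~2 argument has a gap as written. In the identity
\[
u(s)\;=\;\E_0\bigl[P_{s-\eta_\varepsilon}f(W_{\eta_\varepsilon})\,\one_{\eta_\varepsilon\le s}\bigr]+\E_0\bigl[f(W_s)\,\one_{\eta_\varepsilon>s}\bigr],
\]
the first expectation involves $P$, not the killed semigroup $Q$, so ``reducing to the Feller regularity of $Q_t$ on $\R\setminus\{0\}$'' is circular: you are invoking regularity in $t$ of the very object you are analysing. The repair is a bootstrap. From your own decomposition $P_tf(x)=Q_tf(x)+\int_0^t u(t-s)\,p_x(s)\,ds$ and the mere \emph{boundedness} of $u$, continuity of $t\mapsto P_tf(\pm\varepsilon)$ for each fixed $\varepsilon>0$ already follows (since $p_{\pm\varepsilon}$ is a smooth integrable density and $Q_t$ is classically Feller); feed \emph{that} back into the $\eta_\varepsilon$ identity to deduce left-continuity of $u$. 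This works, but it is not what your sketch says, and the continuity of $P_tf$ at $x=0$ genuinely needs left-continuity of $u$ at $t$, so you cannot skip this step. Two smaller points: you need not establish uniform convergence $\|P_tf-f\|_\infty\to 0$, since (as the paper invokes, via Revuz--Yor, Proposition~III.2.4) pointwise convergence together with $P_t(C(\R_\Delta))\subset C(\R_\Delta)$ already yields the Feller property; and in Case~2 you should justify that $W_{\eta_\varepsilon}\in\{-\varepsilon,\varepsilon\}$ (not $\Delta$), which uses that every visit to $0$ is governed by the same $\lambda=\infty$ via the strong Markov property.
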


\begin{proof}
	Since $W$ is a Markov process, its semigroup is given by $Q_tf(x) = E_x[f(W_t)]$, for  $f \in C(\R_{\Delta})$, $x \in \R_{\Delta}$, and $t \geq 0$. To see that it is Feller, it suffices to verify that $Q_tf\in C(\R_{\Delta})$ for all $f \in C(\R_{\Delta})$, and the pointwise convergence
	\begin{equation*}
		|E_x[f(W_t)] - f(x)| \longrightarrow 0 \mbox{ as } t \searrow 0,
	\end{equation*}
	for all $x \in \R_{\Delta}$ and $f \in C(\R_{\Delta})$, see for instance \cite[Propositions~III.2.4]{RevuzYor}. The latter follows directly by Dominated Convergence Theorem, due to the right-continuity of the paths and the fact that $f \in C(\R_{\Delta})$ is bounded. We show now that $Q_t f\in C(\R_{\Delta})$.
	First we prove that for $t \geq 0$ and $f \in C(\R_{\Delta})$,
	\begin{equation}
		\label{eq:decay}
		\lim_{x \to \infty} E_x[f(W_t)] \;=\; 0\,.
	\end{equation}
 Given $\eps > 0$ there exists $M > 0$ such that $|f(x)|< \eps$, for all $x > M$. Since we are interested in the limit as $x\to\infty$, assume without loss of generality that  $x > M$. Thus,
	\begin{align}
		\label{eq:example}
		\big|E_x[f(W_t)]\big|   \leq &\; E_x\big[|f(W_t)|\mathbf{1}_{\{W_t > M\} \cap \{\tau^1 > t\}}\big] + E_x\big[|f(W_t)|\mathbf{1}_{\{W_t \leq M\} \cap \{\tau^1 > t\}}\big]\\ \nonumber
		&\; + E_x\big[|f(W_t)|\mathbf{1}_{\{\tau^1 \leq t\}}|\big] \\ \nonumber
		\leq & \;\eps + \Vert f\Vert_\infty P_x(W_t \leq M, \tau^1 > t) + \Vert f\Vert_\infty P_x(\tau^1 \leq t).
	\end{align}
	Therefore, it is sufficient to show that for every $t \geq 0$ and $M > 0$,
	\begin{equation}\label{eq:limits}
		\lim_{x \rightarrow \infty} P_x(W_t \leq M,\tau^1 > t)  = 0 \quad \text{ and }\quad  \lim_{x \rightarrow \infty} P_x(\tau^1 \leq t)  = 0.
	\end{equation}
	To do so, let $(B_t)_{t \geq 0}$ be a standard Brownian motion and fix $t \geq 0$ and $M > 0$. Consider the hitting times  
	\begin{equation*}
		\tau_x = \inf \{s \geq 0 :  W_s = x\}\quad  \text{ and } \quad \tau_x^B = \inf \{s \geq 0 : B_s = x\}\,.
	\end{equation*}

By the continuity of the paths, since $x > M$, we have $P_x(\tau_0 > \tau_M) = 1$. In particular, Definition~\ref{def:GBMR} implies that $\tau_M \overset{d}{=} \tau_M^B$. Therefore
	\begin{align*}
		P_x(W_t \leq M, \tau^1 > t) &\leq P_x(\tau_M \leq t) = P_x^B(\tau_M^B \leq t) = P_0^B(\tau_{x-M}^B \leq t),
	\end{align*}
	where $P^B_x$ denotes the probability relative to a standard Brownian motion $B$ starting at $x$. From now on, for simplicity, we will use the same notation $P_x$ for both probabilities. 
	From the Reflection Principle (see \cite[Corollary 2.22]{leGall} for instance), we know that
	\begin{equation}
		\label{eq: reflection}
		P_0(\tau^B_{x-M} \leq t) = P_0(|B_t| \geq x-M) = 2P_0(B_t \geq x-M).
	\end{equation}
	Therefore,
	\begin{align*}
		P_x(W_t \leq M, \tau^1 > t) &\leq P_x(\tau_M^B \leq t) = 2P_0(B_t \geq x - M) \\
		& = 2\int_{x-M}^{\infty} \frac{e^{-\frac{y^2}{2t}}}{\sqrt{2\pi t}} \, dy \longrightarrow 0 \; \mbox{ as } x \rightarrow \infty,
	\end{align*}	
	so we get the first limit in \eqref{eq:limits}. For the  second limit in \eqref{eq:limits}, recall that a general Brownian motion starting from $x \neq 0$ does not jump to $\Delta$ without passing through $0$ and it is equal in distribution to a standard Brownian motion in the time interval $[0, \tau_0]$. Then, 
	\begin{equation*}
		P_x(\tau^1 \leq t) \leq P_x(\tau^B_0 \leq t) \longrightarrow 0 \;\mbox{ as } x \rightarrow \infty,
	\end{equation*}
	again by the Reflection Principle. By symmetry, the same argument shows that $E_x[f(W_t)]$ goes to $0$ as $x \rightarrow -\infty$.
	
	Finally, we will show that $E_x[f(W_t)]$ is continuous in any $x \in \R_{\Delta}$, for any $t \geq 0$. The limit in \eqref{eq:decay} already shows the continuity at $\Delta$, so we may assume $x \in \R$.
	
	We will construct a coupling of the family of processes $(W_t^y)_{t\geq 0}$ indexed by their starting points $y \in \R$. Let $(B_t)_{t \geq 0}$ be a Brownian motion starting from $0$ independent of $W^0_t$, a general Brownian motion with boundary conditions at the origin, also starting from zero. We define
	\begin{equation}
		\label{eq: coupling}
		W^y_t \;=\; \begin{cases}
			y + B_t\,, & \text{ if } 0 \leq t \leq \tau_{-y}^B \vspace{3pt}\\
			W^0_{t - \tau_{-y}^B} \,, & \text{ if } t \geq  \tau_{-y}^B\,.
		\end{cases}
	\end{equation}
	Note that when $B_t$ hits $-y$ for the first time, the process $W_t^y$  hits $0$ for the first time.
	In this way, up to  time $\tau_{-y}^B$, the process $W_t^y$ follow the same Brownian motion, but each one starting from its respective initial point $y$. From time $\tau_{-y}^B$ on, the processes $W_t^y$ follow  $W^0_t$, $t\geq 0$.
	
	By symmetry, we can assume without loss of generality that $x \in [0, \infty)$. We first focus on the case $x \in (0, \infty)$. For $t \geq 0$, and $y \in \R$ we have that
	\begin{align*}
		&\big|E[f(W_t^y)] - E[f(W_t^x)]\big| \\ &\leq E\big[|f(W_t^y) - f(W_t^x)|\mathbf{1}_{\{t < \tau_{-y}^B \wedge \tau_{-x}^B\}}\big] + E\big[|f(W_t^y) - f(W_t^x)|\mathbf{1}_{\{t \geq \tau_{-y}^B \wedge \tau_{-x}^B\}}\big] \\
		&\leq E\big[|f(y + B_t) - f(x + B_t)|\big] + E\big[|f(W_t^y) - f(W_t^x)|\mathbf{1}_{\{t \geq \tau_{-y}^B \wedge \tau_{-x}^B\}}\big]\,.
	\end{align*}
	By the uniform continuity of $f$, we have $E[|f(y + B_t) - f(x + B_t)|] \longrightarrow 0$ as $y \rightarrow x$. For the remaining term,  suppose without loss of generality  that $0 < x < y$. Thus, $\tau_{-x}^B < \tau_{-y}^B$, and
	\begin{align*}
		& E\big[|f(W_t^y) - f(W_t^x)|\mathbf{1}_{\{t \geq \tau_{-y}^B \wedge \tau_{-x}^B\}}\big]\\
		&= E\big[|f(W_t^y) - f(W_t^x)|\mathbf{1}_{\{\tau_{-x}^B \leq t \leq \tau_{-y}^B\}}\big] + E\big[|f(W_t^y) - f(W_t^x)|\mathbf{1}_{\{\tau_{-y}^B < t\}}\big] \\
		&\leq 2\Vert f\Vert_\infty P(\tau_{-x}^B \leq t \leq \tau_{-y}^B) + E\big[|f(W^0_{t - \tau_{-y}^B}) - f(W^0_{t - \tau_{-x}^B})|\mathbf{1}_{\{\tau_{-y}^B < t\}}\big]\,.
	\end{align*}
	Due to the fact that almost surely $\lim_{y \to x} \tau_{-y}^B = \tau_{-x}^B$, the sequence of sets $\{\tau_{-x}^B \leq t \leq \tau_{-y}^B\}$ decreases to $\{\tau_{-x}^B = t\}$ as $y$ decreases to $x$. Thus,
	\begin{equation*}
		\lim_{y \downarrow x} P(\tau_{-x}^B \leq t \leq \tau_{-y}^B) = P(\tau_{-x}^B = t) = 0,
	\end{equation*}
	where the last equality follows from the fact that $\tau_{-x}^B$ is a continuous random variable (one can see this from \eqref{eq: reflection}).
	
	Now, let $\tau^1 = \inf \{s \geq 0 : W^0_s = \Delta\}$. Note that
	\begin{align*}
		& E\big[|f(W^0_{t - \tau_{-y}^B}) - f(W^0_{t - \tau_{-x}^B})|\mathbf{1}_{\{\tau_{-y}^B < t\}}\big] \\
		&= E\big[|f(W^0_{t - \tau_{-y}^B}) - f(W^0_{t - \tau_{-x}^B})|\mathbf{1}_{\{\tau_{-x}^B + \tau^1 = t\}}\mathbf{1}_{\{\tau_{-y}^B < t\}}\big] \\ & \quad + E\big[|f(W^0_{t - \tau_{-y}^B}) - f(W^0_{t - \tau_{-x}^B})|\mathbf{1}_{\{\tau_{-x}^B + \tau^1 \neq t\}}\mathbf{1}_{\{\tau_{-y}^B < t\}}\big]\,.
	\end{align*}
	Since $P(\{\tau_{-x}^B + \tau^1 = t) = 0$ (see Remark \ref{continuousrv} below), the first term vanishes. For the second one, note that the only discontinuity allowed of the paths for a general Brownian motion on $\R_\Delta$ occurs exactly at $\tau^1$, when it jumps from $0$ to the cemetery. Thus, on the set $\{\tau_{-x}^B + \tau^1 \neq t\}$, we have
	\begin{equation*}
		W^0_{t - \tau_{-y}^B} \longrightarrow W^0_{t - \tau_{-x}^B}  \quad \mbox{ as } y \rightarrow x
	\end{equation*}
	almost surely. Since $f$ is continuous and bounded, the Dominated Convergence Theorem yields
	\begin{equation*}
		E\big[|f(W^0_{t - \tau_{-y}^B}) - f(W^0_{t - \tau_{-x}^B})|\mathbf{1}_{\{\tau_{-x}^B + \tau^1 \neq t\}}\mathbf{1}_{\{\tau_{-y}^B < t\}}\big] \longrightarrow 0 \quad \mbox{ as } y \rightarrow x.
	\end{equation*}
	Therefore, for $x > 0$, we conclude that
	\begin{equation*}
		\big|E[f(W_t^y)] - E[f(W_t^x)]\big| \longrightarrow 0 \quad \mbox{ as } y \rightarrow x.
	\end{equation*} 
	For $x = 0$, we observe that $W_t^x$ coincides with $W_t^0$, since $\tau_0^B = 0$ almost surely. Then, for any $y \in \R$, note that
	\begin{align*}
		&\big|E[f(W_t^y)] - E[f(W_t^0)]\big| \\ &\leq E\big[|f(y + B_t) - f(W_t^0)|\mathbf{1}_{\{\tau_{-y}^B > t\}}\big] + E\big[|f(W^0_{t - \tau_{-y}^B}) - f(W_t^0)|\mathbf{1}_{\{\tau_{-y}^B \leq t\}}\big] \\
		&\leq 2\Vert f\Vert_\infty P(\tau_{-y}^B > t) + E\big[|f(W^0_{t - \tau_{-y}^B}) - f(W^0_{t })|\mathbf{1}_{\{\tau_{-y}^B \leq t\}}\big]\,.
	\end{align*}
	For the first term on the right-hand side of above, we use that $\lim_{y \to 0} \tau_{-y}^B = \tau_0^B = 0$ almost surely to obtain that
	\begin{equation*}
		\lim_{y \to 0} P(\tau_{-y}^B > t)  = 0.
	\end{equation*}
	
	For the second term, we argue as before, using the continuity of the sample paths and the function $f$, and the Dominated Convergence Theorem. This completes the proof of the continuity of $Q_tf(x)$ in $x$, and the proposition follows.
\end{proof}

\begin{observacao}\rm
	\label{continuousrv}
	The claim $P(\tau_{-x}^B + \tau^1 = t) = 0$ follows from the fact that if $X, Y$ are independent random variables and $X$ is continuous (i.e.\ its distribution function $F_X$ is a continuous function), then $X + Y$ is a continuous random variable.  	
	Indeed, for $t \in \R$ and $\delta > 0$, note that
	\begin{align*}
		P(X + Y = t) &\leq E[\mathbf{1}_{\{t - \delta < X + Y  \leq t\}}] \\
		&= E\big[E[\mathbf{1}_{\{t - \delta - Y < X \leq t - Y\}} | Y]\big] = E[g(Y)]\,,
	\end{align*}
	where $g(y) = E[\mathbf{1}_{\{t - \delta - y < X \leq t - y\}}]$. This is a consequence of the substitution principle, using that $X, Y$ are independent (see \cite[Example~5.1.5]{Durrett}). On the other hand, for all $y \in \R$ and $\eps > 0$,
	\begin{equation*}
		E[\mathbf{1}_{\{t - \delta - y < X \leq t - y\}}] = F_X(t - y) - F_X(t - \delta - y) < \eps,
	\end{equation*}
	provided that $\delta = \delta(\eps) > 0$ is chosen small enough. Here, we used that, $F_X$ is uniformly continuous, since it is continuous, $\lim_{x \to \infty} F_X(x) = 1$ and $\lim_{x \to -\infty} F_X(x)\break = 0$. Since $\eps > 0$ was arbitrary, we can conclude.
\end{observacao}

The next result characterizes the action of the generator of the general Brownian motion $W_t$ on $\bb R_\Delta$ on the functions of its domain. 
\begin{proposicao}
	\label{generator}
	For $f$ in the domain $\mf {D}(\msf{L})$ of a general Brownian motion $W_t$ on $\R_\Delta$ with boundary conditions at the origin, the infinitesimal generator equals 
	\begin{equation*}
	\msf{L}f(x) = \frac{1}{2}f''(x)\; \text{ for } x \neq 0\,, \quad   \msf{L}f(0) = \frac{1}{2}f''(0-) = \frac{1}{2}f''(0+)\,,
\end{equation*}
	 and $\msf{L}f(\Delta) = 0$. In particular,  $\mf {D}(\msf{L})$ is contained in the space of functions whose second derivative on $\R\backslash\{0\}$ extends to an element of $C(\R_{\Delta})$.
\end{proposicao}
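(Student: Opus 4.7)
The strategy is to exploit the fact that, away from the origin, $W$ coincides in law with a standard Brownian motion absorbed at $0$, so the generator ought to act locally as $\tfrac{1}{2} d^2/dx^2$. The value at $\Delta$ is immediate: since $\Delta$ is absorbing, $Q_t f(\Delta) = f(\Delta) = 0$ for every $t \geq 0$, whence $\msf L f(\Delta) = 0$.

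For $x \in \R \setminus \{0\}$, I would localize to a small open interval $I = (x - \eps, x + \eps)$ chosen so that $0 \notin \bar I$, and introduce the exit time $\sigma$ of $W$ from $I$. Because $\sigma \leq \tau_0$ almost surely, the fourth bullet of Definition~\ref{def:GBMR} guarantees that the stopped process $(W_{t \wedge \sigma})_{t \geq 0}$ has the law of a standard Brownian motion stopped at the corresponding exit time. Applying Dynkin's formula to $f \in \mf D(\msf L)$ (justified since $E_y[\sigma] < \infty$ on a bounded interval) yields
\begin{equation*}
f(y) \;=\; E_y[f(W_\sigma)] \;-\; E_y\!\left[\int_0^{\sigma}\msf L f(W_s)\, ds\right] \;=:\; u(y) - v(y), \qquad y \in I.
\end{equation*}
Transferring both expectations to a standard Brownian motion, $u$ becomes the harmonic extension of $f|_{\partial I}$, which on a one-dimensional interval is affine in $y$, while $v$ is the Brownian potential on $I$ of $\msf L f \in C(\R_\Delta)$ with zero boundary values. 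Classical one-dimensional potential theory then gives $v \in C^2(I)$ with $\tfrac{1}{2} v'' = -\msf L f$; therefore $f \in C^2(I)$ and $\tfrac{1}{2} f''(y) = \msf L f(y)$ for all $y \in I$, establishing both the identity and the local $C^2$-regularity at $x$.

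For $x = 0$, the plan is to invoke the Feller property granted by Proposition~\ref{BMFeller}, which forces $\msf L f \in C(\R_\Delta)$. Combined with the previous step, the continuity of $\msf L f$ at $0$ implies that the one-sided limits
\begin{equation*}
\lim_{y \to 0^\pm} \tfrac{1}{2} f''(y) \;=\; \lim_{y \to 0^\pm} \msf L f(y) \;=\; \msf L f(0)
\end{equation*}
exist and coincide, so $\tfrac{1}{2} f''(0-) = \tfrac{1}{2} f''(0+) = \msf L f(0)$. The same relation $f''(y) = 2 \msf L f(y)$ together with $\msf L f \in C(\R_\Delta)$ gives the decay $f''(y) \to 0$ as $y \to \pm \infty$, so $f''$ extends to an element of $C(\R_\Delta)$, yielding the final assertion of the proposition. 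The main subtlety, though not a genuine obstacle, is the identification of the law of $(W_{t \wedge \sigma})_{t \geq 0}$ with that of a stopped Brownian motion on $I$, which relies on $\sigma \leq \tau_0$; the remainder is a localization of classical Dynkin/Poisson-equation arguments for BM combined with the continuity of the generator supplied by the Feller property.
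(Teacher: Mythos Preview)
Your proposal is correct and takes a genuinely different route from the paper. The paper uses the \emph{limit form} of Dynkin's characterization of the generator (as in \cite[Theorem~3.2.29]{Knight}): for $x>0$ it takes $A=(x-h_1,x+h_2)$, computes explicitly the Brownian exit distribution and expected exit time, and obtains
\[
\msf Lf(x)=\lim_{h_1,h_2\searrow 0}\frac{1}{h_1+h_2}\Big(\frac{f(x-h_1)-f(x)}{h_1}+\frac{f(x+h_2)-f(x)}{h_2}\Big).
\]
From this formula it first argues, by letting $h_1,h_2$ go to $0$ in different orders, that the one-sided derivatives exist and agree, and then recovers $f''$ by an integration trick (integrating the discrete second difference over $[\eps,x]$, swapping limit and integral by dominated convergence, and recognising the result as $\tfrac12(f'(x)-f'(\eps))$). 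Your approach instead uses the \emph{integral form} of Dynkin's formula together with the one-dimensional Poisson equation: the decomposition $f=u-v$ with $u$ affine and $v$ the Green potential of $\msf Lf$ on $I$ gives $C^2$-regularity and $\tfrac12 f''=\msf Lf$ in one stroke. Your argument is shorter and more conceptual, leaning on the explicit Green's function on an interval; the paper's is more hands-on and self-contained, extracting the existence of $f'$ and $f''$ directly from the generator limit without invoking potential theory. The treatment of $x=0$ and $x=\Delta$ is identical in both.

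One minor remark: you should invoke the Feller property (Proposition~\ref{BMFeller}) already when applying Dynkin's formula with the stopping time $\sigma$, not only at the end, since that is what guarantees the martingale property of $f(W_t)-\int_0^t\msf Lf(W_s)\,ds$ for $f\in\mf D(\msf L)$.
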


\begin{proof}
	Let $x \neq 0$. Then $x$ is not a trap, which allows us to apply Dynkin's formula (see  \cite[Theorem 3.2.29]{Knight} for a reference), which is 
	\begin{equation}\label{teo: Dykin}
			\msf{L}f(x) \;=\; \lim_{|A| \to 0, A \ni x} \frac{E_x[f(W_{\tau_{A^\complement}})] - f(x)}{E_x[\tau_{A^\complement}]},
	\end{equation}
where $|A|$ denotes the diameter of a Borel set $A$, and as before $\tau_{A^\complement}$ denotes the first time that the process enters into the set $A^\complement$. 

Suppose that $x > 0$ and consider $A = (x - h_1, x + h_2)$ with $h_1, h_2 \searrow 0$. For $W_t$ to reach $0$ it must first exit the set $A$, provided that $h_1$ is sufficiently small. Note that until the process exits $A$, it behaves as a standard Brownian motion started at $x$ by Definition~\ref{def:GBMR}. Hence, in~\eqref{teo: Dykin} we can replace $W_t$ by a standard Brownian motion $B_t$. It is important to note that this does \emph{not} imply that they have the same generators~\cite[Chapter 7, Exercise 3.17]{RevuzYor}, since we do not know if $f$ belongs to the domain of a Brownian Motion. We do not even know at that point if $f$ is twice differentiable. Thus, let us explicitly compute the right-hand side of \eqref{teo: Dykin} for a Brownian motion:
\begin{align*}
	&E_x[f(B_{\tau_{A^\complement}})] \\ &= f(x - h_1)P_x(B_{\tau_{\{x - h_1, x + h_2\}}} = x - h_1) + f(x + h_2)P_x(B_{\tau_{\{x - h_1, x + h_2\}}} = x + h_2) \\
	&= f(x - h_1)P_0(B_{\tau_{\{- h_1, h_2\}}} = - h_1) + f(x + h_2)P_0(B_{\tau_{\{- h_1, h_2\}}} = h_2) \\
	&= \frac{f(x - h_1)h_2 + f(x + h_2)h_1}{h_2 + h_1}.
\end{align*}

Moreover, $E_x[\tau_{\{x - h_1, x + h_2\}}] = E_0[\tau_{\{- h_1, h_2\}}] = h_1h_2$, for every $x \in \R$ (see \cite[Example 3.2.30]{Knight} for the second equality).
Thus, we obtain
	\begin{align}
		\label{DynkinformulaWt}
		\msf{L}f(x) &= \lim_{h_1 \searrow 0, h_2 \searrow 0} \left(\frac{f(x - h_1)h_2 + f(x + h_2)h_1}{h_2 + h_1} - f(x)\right)(h_1h_2)^{-1} \nonumber \\
		&= \lim_{h_1 \searrow 0, h_2 \searrow 0}  \left(\frac{f(x - h_1) - f(x)}{h_1} + \frac{f(x + h_2) - f(x)}{h_2}\right)(h_1 + h_2)^{-1}.
	\end{align}
	
	Since $h_1$ and $h_2$ go to $0$ independently, we can take first the limit on $h_2$ and we get
	\begin{equation}
		\label{derivativeexists}
		\msf{L}f(x) = \lim_{h_1 \searrow 0}  \left(\frac{f(x - h_1) - f(x)}{h_1} + \lim_{h_2 \searrow 0} \frac{f(x + h_2) - f(x)}{h_2}\right)(h_1)^{-1},
	\end{equation}
	so $\lim_{h_2 \searrow 0} \frac{f(x + h_2) - f(x)}{h_2}$ exists, which is precisely $f'_+(x)$ the right derivative of $f$ at $x$. Similarly, the limit $\lim_{h_1 \searrow 0} \frac{f(x - h_1) - f(x)}{h_1}$ exists and it is equal to $- f'_-(x)$, where $f'_-(x)$ denotes the left derivative of $f$. Moreover, for the limit in \eqref{derivativeexists} to be finite, the numerator must necessarily approach zero, implying that $f'_-(x) = f'_+(x)$. This ensures the existence of $f'(x)$ for $x > 0$. A completely analogous argument shows that $f'(x)$ also exists for $x < 0$.
	
	Now, we compute the second derivative. Fix $0 < \eps < x$, we apply \eqref{DynkinformulaWt} to $\eps < y < x$ with $h_1 = h_2 = h_n = \frac{x - \eps}{n}$, for each $n \in \N$, so 
	\begin{equation*}
		\msf{L}f(y) =	\lim_{n \to \infty} \frac{1}{2h_n}\left(\frac{f(y - h_n) - f(y)}{h_n} + \frac{f(y + h_n) - f(y)}{h_n}\right).
	\end{equation*}
	Since $Lf$ is continuous,  it is bounded on $[\eps, x]$, so by Dominated Convergence Theorem we may exchange the limit and the integral below: 
	\begin{equation*}
		\int_{\eps}^x \msf{L}f(y) \, dy = \lim_{n \to \infty} \int_{\eps}^x \frac{1}{2h_n}\left(\frac{f(y - h_n) - f(y)}{h_n} + \frac{f(y + h_n) - f(y)}{h_n}\right) \, dy.
	\end{equation*} 
	Observe that 
	\begin{align*}
		& \int_{\eps}^x \frac{1}{2h_n}\left(\frac{f(y - h_n) - f(y)}{h_n} + \frac{f(y + h_n) - f(y)}{h_n}\right) \, dy \\
		&= \frac{1}{2h_n^2} \left(\int_{\eps - h_n}^{x - h_n} f(y) \, dy - \int_{\eps}^x f(y) \, dy\right) + \frac{1}{2h_n^2}\left(\int_{\eps + h_n}^{x + h_n} f(y) \, dy - \int_{\eps}^x f(y) \, dy \right) \\
		&= \frac{1}{2h_n^2} \left(\int_{\eps - h_n}^{\eps}\!\!\! f(y) \, dy - \int_{x - h_n}^x\!\!\! f(y) \, dy \right) + \frac{1}{2h_n^2} \left(\int_{x}^{x + h_n}\!\!\! f(y) \, dy - \int_{\eps}^{\eps + h_n}\!\!\! f(y) \, dy \right).
	\end{align*}
	
	Define $g(x) = \int_{0}^x f(y) \, dy$ for $x > 0$. Since $f$ is differentiable on $(0, \infty)$, we have that $g$ is twice differentiable and $g'' = f'$ for $x > 0$. We can rewrite the above expression in terms of $g$ as follows:
	\begin{align*}
		& \frac{1}{2h_n^2} \left(\int_{x}^{x + h_n} f(y) \, dy - \int_{x - h_n}^x f(y) \, dy \right) - \frac{1}{2h_n^2} \left(\int_{\eps}^{\eps + h_n} f(y) \, dy - \int_{\eps - h_n}^{\eps} f(y) \, dy \right) \\
		&= \frac{1}{2h_n}\left(\frac{g(x + h_n) - g(x)}{h_n} + \frac{g(x - h_n) - g(x)}{h_n}\right) \\ & \quad \quad - \frac{1}{2h_n}\left(\frac{g(\eps + h_n) - g(\eps)}{h_n} + \frac{g(\eps - h_n) - g(\eps)}{h_n}\right),
	\end{align*}
	and an application of Taylor's theorem yields
	\begin{align*}
		\lim_{n \to \infty} \frac{1}{2h_n}\left(\frac{g(x + h_n) - g(x)}{h_n} + \frac{g(x - h_n) - g(x)}{h_n}\right) = \frac{1}{2}g''(x) = \frac{1}{2}f'(x),
	\end{align*}
	and
	\begin{align*}
		\lim_{n \to \infty} \frac{1}{2h_n}\left(\frac{g(\eps + h_n) - g(\eps)}{h_n} + \frac{g(\eps - h_n) - g(\eps)}{h_n}\right) = \frac{1}{2}f'(\eps).
	\end{align*}
	
	Thus, collecting all these expressions, we have
	\begin{equation*}
		\int_{\eps}^x \msf{L}f(y) \, dy = \frac{1}{2}(f'(x) - f'(\eps)),
	\end{equation*} 
	and the Fundamental Theorem of Calculus implies that $f'$ is differentiable and
	\begin{equation*}
		\msf{L}f(x) = \frac{1}{2}f''(x),
	\end{equation*}
	for $x > \eps > 0$. Since $\eps \in (0, x)$ was arbitrary, this identity extends to all $x > 0$. The proof for $x < 0$ is analogous.	
The generator at $\Delta$ is trivially derived by noticing that $\Delta$ is an absorbing state:
	\begin{equation*}
		\msf{L}f(\Delta) = \lim_{t \to 0^+} \frac{E_{\Delta}[f(W_t)] - f(\Delta)}{t} = 0.
	\end{equation*}
Therefore, for $f \in \mf {D}(\msf{L})$, we have
	\begin{equation*}
		\msf{L}f(x) = \frac12f''(x)\; \mbox{ for } x \neq 0 \quad \mbox{ and } \quad \msf{L}f(\Delta) = 0.
	\end{equation*}
Since $\msf{L}f$ is continuous, there exists a natural extension of $f''$ to $0$ 
	\begin{align*}
		2\msf{L}f(0) = \lim_{x \to 0^+} f''(x) = \lim_{x \to 0^-} f''(x)\,.
	\end{align*}
This extension  belongs to $C(\R_{\Delta})$, which completes the proof.
\end{proof}

\begin{observacao}\rm
	\label{rem:dif}
Note that for $f \in \mf {D}(\msf{L})$, one has that
	\begin{equation*}\label{eq:firstderivative}
		f'(0+) = f'_+(0)\quad \text{ and }\quad  f'(0-) = f'_-(0),
	\end{equation*}
	where $f'_+$ ($f'_-$) denotes the derivative from the right (left).
	Indeed, let $f \in \mf {D}(\msf{L})$. We have seen in Proposition~\ref{generator} that $f$ is twice differentiable on $\R\backslash \{0\}$, and the second derivative $f''$ extends to an element of $C(\R_{\Delta})$, in particular $f''$ is bounded. Thus, for any sequence $c_n \downarrow 0$, we have
	\begin{equation*}
		|f'(c_n) - f'(c_m)| = \left|\int_{c_m}^{c_n} f''(x) \, dx\right| \leq \Vert f''\Vert_\infty \cdot |c_n - c_m| \longrightarrow 0,
	\end{equation*}
	for $n, m$ large enough. This proves that the limit $\lim_{c \to 0^+} f'(c)$ exists.
	
	Now, for any $h > 0$, we have that $f$ is continuous on $[0, h]$ and differentiable on $(0, h)$, so using the Mean Value Theorem, we can find a $c = c(h) \in (0, h)$ such that
	\begin{equation*}
		f'(c) = \frac{f(h) - f(0)}{h}.
	\end{equation*}
	In particular, since $c(h) \rightarrow 0$ as $h \rightarrow 0$, it follows that
	\begin{equation}
		\label{meanvaluethm}
		f'_+(0) = \lim_{h \to 0^+} \frac{f(h) - f(0)}{h} = \lim_{c \to 0^+} f'(c) = f'(0+),
	\end{equation}
	as desired. The other limit can be computed similarly.
\end{observacao}

Now, in order to fully characterize the general Brownian motion on $\R_\Delta$ with boundary conditions at the origin, we must determine the domain of its infinitesimal generator. Naturally, we expect that the domain of our general Brownian motion has certain restrictions at zero. The next lemma confirms our expectation by showing that the domain $\mf {D}(\msf{L})$ is determined by a condition in the neighborhood of $x = 0$. Moreover, we will see later that this boundary condition depends on the behavior of the process when it hits zero.

\begin{lema}
	\label{neighb0}
	Let $f_1 \in \mf {D}(\msf{L})$. If  $f_2 \in C(\R_{\Delta})$ is such that $f_2''$ exists in $\R \backslash  \{0\}$, and $f_2''$ admits an extension to an element of $C(\R_{\Delta})$ and if moreover $f_1 = f_2$ in $[-\eps, \eps]$ for some $\eps > 0$, then $f_2 \in \mf {D}(\msf{L})$.
\end{lema}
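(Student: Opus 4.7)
The plan is to reduce to the case of a function vanishing near $0$. Setting $g := f_2 - f_1$, it suffices to show $g \in \mf{D}(\msf{L})$, since then $f_2 = f_1 + g \in \mf{D}(\msf{L})$ by linearity. Observe that $g \in C(\R_\Delta)$ with $g \equiv 0$ on $[-\eps, \eps]$, and that $g''$ (which exists on $\R \setminus \{0\}$ and extends to $C(\R_\Delta)$ by hypothesis and Proposition~\ref{generator}) takes value $0$ at the origin, since $g$ is identically zero in a neighborhood of $0$. By Proposition~\ref{BMFeller}, the semigroup $(Q_t)_{t\geq 0}$ of $W$ is Feller, so $g \in \mf{D}(\msf{L})$ is equivalent to the uniform convergence $\|t^{-1}(Q_t g - g) - \phi\|_\infty \to 0$ as $t \to 0^+$ for some $\phi \in C(\R_\Delta)$. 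Motivated by Proposition~\ref{generator}, I take $\phi(x) := \tfrac12 g''(x)$ on $\R$ and $\phi(\Delta) := 0$; note $\phi$ also vanishes on $[-\eps, \eps]$.

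I would then verify the uniform convergence by splitting $\R_\Delta$ into three regions. At $x = \Delta$ both terms are zero. For $x$ with $|x| \geq \eps/2$, let $\tau := \inf\{s \geq 0 : |W_s| \leq \eps/4\}$; by Definition~\ref{def:GBMR}, $W$ coincides in law with a standard Brownian motion up to $\tau$, so Itô's formula applied to $g(W_{t\wedge\tau})$ (legitimate since $g$ is $C^2$ on $\{|y|\geq \eps/4\}$) gives $E_x[g(W_{t\wedge\tau})]-g(x) = \tfrac12 E_x[\int_0^{t\wedge\tau} g''(W_s)\,ds]$. Combined with the bound $|E_x[(g(W_t) - g(W_{t\wedge\tau}))\mathbf{1}_{\{\tau\leq t\}}]| \leq 2\|g\|_\infty P_x(\tau \leq t)$ and the reflection-principle estimate $\sup_{|x|\geq \eps/2}P_x(\tau \leq t) = O(e^{-c/t})$, dividing by $t$ and using the uniform continuity of $g''$ yields $(Q_t g(x) - g(x))/t \to \tfrac12 g''(x) = \phi(x)$ uniformly in this region.

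The remaining region, $x \in [-\eps/2, \eps/2]$, is where the main obstacle lies. Here $g(x) = \phi(x) = 0$, so one only needs $Q_t g(x)/t \to 0$ uniformly; via $|Q_t g(x)| \leq \|g\|_\infty P_x(\sigma \leq t)$ with $\sigma := \inf\{s\geq 0 : |W_s| > \eps\}$, this reduces to $\sup_{|x|\leq \eps/2}P_x(\sigma \leq t) = o(t)$. The difficulty is that the dynamics of $W$ at $0$ may be arbitrary — sticky, killed, skew, or a mixture — and thus cannot be directly coupled with a Brownian motion. The key observation is that Definition~\ref{def:GBMR} forces $W$ to agree in law with a standard Brownian motion between visits to $0$, so any trajectory exiting $[-\eps,\eps]$ must contain a Brownian segment traversing a distance of at least $\eps/2$, which by the reflection principle occurs with probability $O(e^{-c/t})$ uniformly in the starting point. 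Piecing the three regions together yields the required uniform convergence, and hence $g \in \mf{D}(\msf{L})$.
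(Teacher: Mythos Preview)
Your proof is correct and shares the paper's essential structure: split into regions far from and near zero, use that $W$ is a standard Brownian motion away from the origin, and for the near-zero region bound the exit probability via the strong Markov property plus a Brownian hitting-time estimate. The differences are organizational rather than substantive. You reduce to $g=f_2-f_1$ vanishing on $[-\eps,\eps]$, which slightly streamlines the bookkeeping; the paper instead compares $f_2$ directly to $f_1$ in the inner region and to the absorbed Brownian motion in the outer region. For $|x|\geq \eps/2$ you invoke It\^o's formula on the stopped process, whereas the paper compares $W$ to the absorbed Brownian motion $B^0$ and controls the difference by $P_x(\tau_0<t)/t$; both routes work and yield the same uniform estimate.

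One point where your write-up is looser than the paper's: in the region $|x|\leq \eps/2$ you assert that ``any trajectory exiting $[-\eps,\eps]$ must contain a Brownian segment traversing a distance of at least $\eps/2$''. This is the right intuition, but it should be made precise, since after hitting $0$ the process could cross to the other side. The paper carries this out by applying the strong Markov property at the hitting time $T_{\eps/2}$ of $\{-\eps/2,\eps/2\}$, then using the inclusion $\{T_\eps<t\}\subset\{T_{\{0,\eps\}}<t\}$ together with the identity $T_{\{0,\eps\}}=T^B_{\{0,\eps\}}$ under $P_{\pm\eps/2}$ (valid because $W$ coincides with Brownian motion up to $\tau_0$). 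With that step spelled out, your argument is complete.
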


\begin{proof}
	We need to show that the limit 
	\begin{equation*}
	\lim_{t \to 0^+} \frac{E_x[f_2(W_t)] - f_2(x)}{t}
	\end{equation*}
	exists and that it is uniform in $x$. We split the analysis into two cases: $|x| > \eps/2$ and $|x| < \eps/2$.
	
	Let $|x| > \eps/2$. We may assume that $x>0$. Since the process $W_t$ is equal in distribution to $B_t$ until the hitting time of zero, we compare the above limit to the one where $W$ is replaced by $B^0$, an absorbed Brownian motion at $0$ starting from $x$, then
	\begin{align*}
		& \left|\frac{E_x\big[f_2(W_t)\big] - f_2(x)}{t} - \frac{E_x\big[f_2(B^0_t)\big] - f_2(x)}{t}\right|\\
		&= \left|\frac{E_x\big[f_2(B^0_t)\mathbf{1}_{\{\tau_0 > t\}}\big] + E_x\big[f_2(W_t)\mathbf{1}_{\{\tau_0 < t\}}] - f_2(x)}{t} - \frac{E_x[f_2(B^0_t)\big] - f_2(x)}{t}\right| \\
		&= \left|\frac{E_x\big[f_2(W_t)\mathbf{1}_{\{\tau_0 < t\}}\big] - E_x\big[f_2(B^0_t)\mathbf{1}_{\{\tau_0 < t\}}\big]}{t}\right| \\
		&\leq 2\Vert f_2\Vert_\infty \frac{P_x(\tau_0 < t)}{t}.
	\end{align*} 
		By the Reflection Principle,  we have that
					\begin{align}
			\label{estimativa hitting time 0}
			P_x(\tau_0 < t) & = P_0(\tau_x < t) = P_0(|B_t| \geq x) \leq \frac{E_0[B_t^{2n}]}{x^{2n}}
			\lesssim \frac{t^n}{\varepsilon^{2n}}\,.
		\end{align}
	for  all $n \in \N$ and $|x| > \eps$. Therefore, we conclude that the  limits 
	\begin{equation*}
		\lim_{t \to 0^+} \frac{E_x[f_2(W_t)] - f_2(x)}{t} \quad \text{ and }\quad  \lim_{t \to 0^+} \frac{E_x[f_2(B^0_t)] - f_2(x)}{t}
	\end{equation*}
	must  be equal for $|x| > \eps/2$ whenever they exist, and that they are uniform in $x$ (provided that $\varepsilon$ is fixed).
	Note that $f_2 \in C(\R_\Delta)$, is twice differentiable in $\R \backslash  \{0\}$ and $f_2''(0+)$ exists. This however, does not imply that $f_2$ is in the domain of the absorbed Brownian motion since this would require that $f_2''(0+)=0$. However, a small computation using~\eqref{estimativa hitting time 0} shows that it still holds that
	\begin{equation*}
		\lim_{t \to 0^+} \frac{E_x[f_2(B^0_t)] - f_2(x)}{t} \;=\;\frac{1}{2}f_2''(x)\,.
	\end{equation*} 
	It remains to prove that the same limit extends uniformly near to $x = 0$. Since $f_1 = f_2$ on $[-\eps, \eps]$, we have for $-\eps < x < \eps$ that
	\begin{align*}
		&\left|\frac{E_x[f_2(W_t)] - f_2(x)}{t} - \frac{E_x[f_1(W_t)] - f_1(x)}{t}\right|\\
		&	= \left|\frac{E_x[f_2(W_t)\mathbf{1}_{\{|W_t| > \eps\}}] - E_x[f_1(W_t)\mathbf{1}_{\{|W_t| > \eps\}}}{t}\right|.
	\end{align*}
	
	Set $T_{\eps} = \tau_{\{-\eps, \eps\}}$. If $|W_t| > \eps$, then $W$ has already hit $\eps$ or $-\eps$ by time $t$, and $T_{\eps} < t$. Observing also that $|f_2(x) - f_1(x)| \leq \max_{|x| \geq \eps} |f_2(x) - f_1(x)|$, which is finite, we get
	\begin{equation*}
		\left|\frac{E_x[f_2(W_t)] - f_2(x)}{t} - \frac{E_x[f_1(W_t)] - f_1(x)}{t}\right| \leq \max_{|x| \geq \eps} |f_2(x) - f_1(x)|\frac{P_x(T_{\eps} < t)}{t}.
	\end{equation*}
	
	We now estimate this last probability and aim to find a uniform upper bound. Considering $-\eps/2 \leq x \leq \eps/2$, we have $P_x(T_{\eps/2} \leq T_{\eps}) = 1$ by continuity of the paths (note that this also includes the case when $T_{\eps} = \infty$ when the process is killed before hitting $-\eps$ or $\eps$). Now, for $s\geq 0$ consider the shift operators $\theta_s$ defined for any c\`adl\`ag function $\omega$ via $((\theta_s\omega)_t)_{t\geq 0}= (\omega_{t+s})_{t\geq 0}$. Thus, if $W$ reaches $-\eps$ or $\eps$ before time $t$, then the shifted process $W \circ \theta_{T_{\eps/2}}$ also reaches $-\eps$ or $\eps$ before time $t$. Hence,
	\begin{equation*}
		\mathbf{1}_{\{T_{\eps} < t\}} \leq \mathbf{1}_{\{T_{\eps} < t\}} \circ \theta_{T_{\eps / 2}}. 
	\end{equation*}
	An application of the strong Markov property yields
	\begin{align*}
		P_x(T_{\eps} < t) &\leq E_x[\mathbf{1}_{\{T_{\eps} < t\}} \circ \theta_{T_{\eps / 2}}] \\
		&= E_x[E_{W_{T_{\eps / 2}}}[\mathbf{1}_{\{T_{\eps} < t\}}]] \\
		&= E_x[\mathbf{1}_{\{W_{T_{\eps/2} = -\eps/2}\}}E_{-\eps/2}[\mathbf{1}_{\{T_{\eps} < t\}}]] + E_x[\mathbf{1}_{\{W_{T_{\eps/2} = \eps/2}\}}E_{\eps/2}[\mathbf{1}_{\{T_{\eps} < t\}}]] \\
		&= P_x(W_{T_{\eps/2}} = -\eps/2)P_{-\eps/2}(T_{\eps} < t) + P_x(W_{T_{\eps/2}} = \eps/2)P_{\eps/2}(T_{\eps} < t) \\
		&\leq P_{-\eps/2}(T_{\eps} < t) + P_{\eps/2}(T_{\eps} < t).
	\end{align*}
	
	Now, we compare this to Brownian motion hitting times. Let $T_{\{0, \eps\}} = \inf \{s \geq 0 : W_s \in \{-\eps, 0, \eps\}\}$ and $T_{\{0, \eps\}}^B = \inf \{s \geq 0 : B_s \in \{-\eps, 0, \eps\}\}$, where $(B_s)_{s \geq 0}$ is a standard Brownian motion. Observe that
	\begin{align*}
		P_{\eps/2}(T_{\eps} < t) &\leq P_{\eps/2}(T_{\{0, \eps\}} < t) = P_{\eps/2}(T^B_{\{0, \eps\}} < t)\,,
	\end{align*}
	where we have used that $W_t$ is equal in distribution to a Brownian motion in time interval $[0, T_{\{0, \eps\}}]$. Similarly, $P_{-\eps/2}(T_{\eps} < t) \leq P_{-\eps/2}(T^B_{\{0, \eps\}} < t)$. Also, as we did in~\eqref{estimativa hitting time 0}, we have
	\begin{equation}\label{eq:smallt}
		P_{-\eps/2}(T^B_{\{0, \eps\}} < t) = P_{\eps / 2}(T_{\{0, \eps\}}^B < t) = o(t^n) \quad \mbox{ as } t \rightarrow 0^{+}\,
	\end{equation}
	for all $n > 0$.	
	Putting all  together, we get
	\begin{align*}
		& \left|\frac{E_x[f_2(W_t)] - f_2(x)}{t} - \frac{E_x[f_1(W_t)] - f_1(x)}{t}\right| \\ &\leq\; \max_{x \geq \eps} |f_2(x) - f_1(x)|\frac{P_{-\eps/2}(T^B_{\{0, \eps\}} < t) + P_{\eps/2}(T^B_{\{0, \eps\}} < t)}{t}
	\end{align*} 
	which goes to zero independently of $x$ as $t \rightarrow 0^{+}$. Therefore, both expressions have the same limit. Recalling that $f_1 \in \mf {D}(\msf{L})$, we obtain that
	\begin{equation*}
		\lim_{t \to 0^+} \frac{E_x[f_2(W_t)] - f_2(x)}{t} = \lim_{t \to 0^+} \frac{E_x[f_1(W_t)] - f_1(x)}{t}
	\end{equation*}
	uniformly in $|x| \leq \eps/2$. This completes the proof.
\end{proof}

The next result will bring us one step closer in characterising the domain of the general Brownian motion on $\R_\Delta$. 

\begin{lema}
	\label{quaseFeller}
	There exist constants $c_1, c^-_2, c^+_2, c_3 \geq 0$ and measures $\nu^-(dx)$ and $\nu^+(dx)$ on $(-\infty, 0)$ and  $(0, \infty)$, respectively, satisfying
	\begin{equation}
		\label{constants normalization}
		c_1 + c^-_2 + c^+_2 + c_3 + \int_{-\infty}^{0} (1 \wedge - x) \, \nu^-(dx) + \int_{0}^{\infty} (1 \wedge x) \, \nu^+(dx) \;=\; 1\,,
	\end{equation}
	so that $f$ belongs to $\mf {D}(\msf{L})$ if and only if: $f$ belongs to $C(\R_{\Delta})$, is twice differentiable in $\R \backslash  \{0\}$ with second derivative extending to an element of $C(\R_{\Delta})$, and the equation
	\begin{align}
		\label{firstdomain}
		& \nonumber c_1f(0) + c^-_2f'(0-) - c^+_2f'(0+) + \frac{c_3}{2}f''(0+) \\ &= \int_{(-\infty, 0)} (f(x) - f(0)) \, \nu^-(dx) + \int_{(0, \infty)} (
		f(x) - f(0)) \, \nu^+(dx)
	\end{align}
	holds.
\end{lema}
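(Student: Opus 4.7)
The aim is to identify the single linear condition at $0$ that cuts $\mf{D}(\msf{L})$ out of the space $V$ of $f\in C(\R_\Delta)$ which are $C^2$ on $\R\setminus\{0\}$ with $f''$ extending to an element of $C(\R_\Delta)$. Proposition~\ref{generator} already gives $\mf{D}(\msf{L})\subseteq V$ and $\msf{L}f(0)=\tfrac{1}{2}f''(0\pm)$, and Lemma~\ref{neighb0} reduces the characterization to a condition on the germ of $f$ at $0$. So it suffices to produce constants $c_1,c_2^\pm,c_3\ge 0$ and measures $\nu^\pm$ in the required Ventsel--Feller form \eqref{firstdomain} satisfying \eqref{constants normalization}.

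The key tool is Dynkin's formula (Theorem~3.2.29 of \cite{Knight}) applied at $0$. In the non-trap cases (Cases~1 and 2 of the case-split preceding Proposition~\ref{BMFeller}) set $\tau_\epsilon=\inf\{t\ge 0 : W_t\notin(-\epsilon,\epsilon)\}$ and introduce the exit measure $\mu_\epsilon(\cdot)=P_0(W_{\tau_\epsilon}\in\cdot)$ on $\R_\Delta$. For every $f\in\mf{D}(\msf{L})$, Dynkin's formula yields
\begin{equation*}
\tfrac{1}{2}f''(0)\;=\;\lim_{\epsilon\downarrow 0}\frac{\int f\,d\mu_\epsilon - f(0)}{E_0[\tau_\epsilon]}.
\end{equation*}
Decompose $\mu_\epsilon = p_+(\epsilon)\delta_{\epsilon}+p_-(\epsilon)\delta_{-\epsilon}+q(\epsilon)\delta_\Delta+\mu_\epsilon^{\mathrm{rem}}$, where $\mu_\epsilon^{\mathrm{rem}}$ collects any residual mass on $\R\setminus[-\epsilon,\epsilon]$, and rescale by
\[
M(\epsilon)\;:=\;q(\epsilon)+\epsilon p_+(\epsilon)+\epsilon p_-(\epsilon)+E_0[\tau_\epsilon]+\int(1\wedge|x|)\,d\mu_\epsilon^{\mathrm{rem}}(x).
\]
A weak-compactness argument extracts a subsequence $\epsilon_k\downarrow 0$ along which $q/M,\, \epsilon p_\pm/M,\, E_0[\tau_\epsilon]/M$ and $\mu_\epsilon^{\mathrm{rem}}/M$ converge to $c_1,\,c_2^\pm,\,c_3$ and to Radon measures $\nu^-$ on $(-\infty,0)$ and $\nu^+$ on $(0,\infty)$ respectively, automatically satisfying \eqref{constants normalization}. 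Plugging the Taylor expansions $f(\pm\epsilon)=f(0)\pm\epsilon f'(0\pm)+\tfrac{\epsilon^2}{2}f''(0\pm)+o(\epsilon^2)$ into the Dynkin identity, multiplied by $M(\epsilon)^{-1}$, and passing to the limit yields exactly \eqref{firstdomain}. The converse direction uses Lemma~\ref{neighb0}: given $f\in V$ satisfying \eqref{firstdomain}, one exhibits $\tilde f\in\mf{D}(\msf{L})$ with $f=\tilde f$ on a neighbourhood of $0$; such $\tilde f$ exists because \eqref{firstdomain} imposes a single linear condition on the germ and $\mf{D}(\msf{L})$ is dense in $C(\R_\Delta)$ by the Hille--Yosida Theorem~\ref{thm:Hille-Yosida}. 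Case~3 ($0$ is a trap) is handled directly: $\msf{L}f(0)=0$ for every $f\in\mf{D}(\msf{L})$, so \eqref{firstdomain} holds trivially with $c_1=c_2^\pm=c_3=0$ and $\nu^\pm\equiv 0$.

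The principal technical obstacle is the extraction of the Lévy-type measures $\nu^\pm$ and verification of their integrability: one must establish tightness of the rescaled residual exit masses on compact subsets of $\R\setminus\{0\}$, preclude escape of mass to $\pm\infty$ (for which comparison with standard Brownian motion via the Reflection Principle---as used in Proposition~\ref{BMFeller}---supplies the required tail bounds), check the integrability $\int(1\wedge|x|)\,d\nu^\pm<\infty$, and show that the extracted limits are independent of the chosen subsequence. Uniqueness of the decomposition is obtained by testing \eqref{firstdomain} against a family of functions in $V$ whose jets at $0$ span the relevant finite-dimensional space and whose restrictions to $\R\setminus\{0\}$ are rich enough to separate finite Borel measures.
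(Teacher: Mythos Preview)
Your forward direction follows the paper's Dynkin-formula/exit-measure strategy closely, but there are two genuine gaps.

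First, in the trap case you set $c_1=c_2^\pm=c_3=0$ and $\nu^\pm\equiv 0$; this violates the normalization \eqref{constants normalization}. The correct choice is $c_3=1$ (with all other data zero): since $0$ is absorbing, $\msf{L}f(0)=0$, hence $f''(0+)=0$ for every $f\in\mf{D}(\msf{L})$, and the condition $\tfrac{c_3}{2}f''(0+)=0$ is then automatically satisfied.

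Second, and more seriously, your converse argument does not work. You propose: given $f\in V$ satisfying \eqref{firstdomain}, find $\tilde f\in\mf{D}(\msf{L})$ with $\tilde f=f$ near $0$, then invoke Lemma~\ref{neighb0}. But density of $\mf{D}(\msf{L})$ in $C(\R_\Delta)$ gives only uniform approximation, not coincidence on a neighbourhood of $0$; and you have no independent way of producing elements of $\mf{D}(\msf{L})$ with a prescribed germ---that is precisely what you are trying to establish. Moreover, when $\nu^\pm\not\equiv 0$ the condition \eqref{firstdomain} is \emph{not} a condition on the germ of $f$ at $0$ (it involves $f$ globally through the integrals), so Lemma~\ref{neighb0} is not even the right tool here. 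The paper instead argues via the resolvent: for $g\in C(\R_\Delta)$ the equation $\beta f-\tfrac12 f''=g$ has a solution $R_\beta g\in\mf{D}(\msf{L})\subseteq\mc{D}'$; if some $f_1\in\mc{D}'\setminus\mf{D}(\msf{L})$ existed, the difference $f_1-R_\beta(\beta f_1-\tfrac12 f_1'')$ would be a nonzero element of $\mc D'$ solving $\beta h-\tfrac12 h''=0$, hence of the form $d\,e^{-\sqrt{2\beta}|x|}$, and substituting this into \eqref{firstdomain} yields a sign contradiction with the normalization \eqref{constants normalization}. Finally, the lemma does not assert uniqueness of the constants and measures, nor subsequence-independence, so those ``obstacles'' are not part of the task.
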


To prove the previous lemma we need the next result concerning weak convergence of measures.
\begin{proposicao}
	\label{weakconvergencemeasures}
	Let $(\mu_{\eps})_{\eps > 0}$ be a family of measures on $(0, \infty)$ such that\linebreak $\mu_{\eps}((0, \infty)) \leq 1$ for all $\eps > 0$, and let $C[0, \infty]$ be the space of continuous functions on $[0, \infty]$. Then, there exist a subsequence $\eps_n \searrow 0$ and a measure $\mu$ on $[0, \infty]$ such that 
	\begin{equation}\label{eq:f-vague}
		\lim_{n \to \infty} \int_{(0, \infty)} f(x) \, \mu_{\eps_n}(dx) = \int_{[0, \infty]} f(x) \, \mu(dx)
	\end{equation}
	for every $f \in C[0, \infty]$. 
\end{proposicao}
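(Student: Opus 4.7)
The plan is to reduce the statement to a standard weak-compactness result on a compact metric space. Note that $[0,\infty]$, the one-point compactification of $[0,\infty)$, is a compact metric space (for instance via the homeomorphism $[0,\infty] \to [0,1]$ given by $x \mapsto x/(1+x)$ with $\infty \mapsto 1$). Correspondingly, $C[0,\infty]$ is a separable Banach space.

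First I would extend each $\mu_\eps$ to a Borel measure $\tilde\mu_\eps$ on $[0,\infty]$ by declaring $\tilde\mu_\eps(\{0\}) = \tilde\mu_\eps(\{\infty\}) = 0$. Because $\mu_\eps((0,\infty))\le 1$, each $\tilde\mu_\eps$ is a sub-probability measure on the compact space $[0,\infty]$, so for any $f\in C[0,\infty]$
\begin{equation*}
\int_{(0,\infty)} f(x)\,\mu_\eps(dx) \;=\; \int_{[0,\infty]} f(x)\,\tilde\mu_\eps(dx)\,,
\end{equation*}
and $\bigl|\int f\,d\tilde\mu_\eps\bigr|\le \|f\|_\infty$. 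Hence each $\tilde\mu_\eps$ defines a positive linear functional $\Lambda_\eps \in C[0,\infty]^*$ with $\|\Lambda_\eps\|\le 1$.

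Next I would apply the Banach--Alaoglu theorem: the closed unit ball of $C[0,\infty]^*$ is weak-$*$ compact, and since $C[0,\infty]$ is separable, this ball is metrizable and hence sequentially compact in the weak-$*$ topology. Extract a subsequence $\eps_n \searrow 0$ along which $\Lambda_{\eps_n} \to \Lambda$ in the weak-$*$ sense, i.e.\ $\Lambda_{\eps_n}(f) \to \Lambda(f)$ for every $f\in C[0,\infty]$. The limit functional $\Lambda$ is positive (take pointwise limits with $f\ge 0$) and satisfies $\|\Lambda\|\le 1$. The Riesz representation theorem on the compact Hausdorff space $[0,\infty]$ then furnishes a unique finite positive Borel measure $\mu$ on $[0,\infty]$ with
\begin{equation*}
\Lambda(f) \;=\; \int_{[0,\infty]} f(x)\,\mu(dx)\,, \qquad f\in C[0,\infty]\,,
\end{equation*}
which yields the claimed convergence~\eqref{eq:f-vague}.

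The only conceptual subtlety — and the step I would flag — is that $\mu$ may charge the boundary points $\{0\}$ and $\{\infty\}$ even though each $\mu_{\eps_n}$ does not; this is harmless for the statement since the limit is taken against test functions in $C[0,\infty]$, but it will matter later when \eqref{eq:f-vague} is used to derive the boundary condition at the origin. Apart from that, the argument is a direct application of Banach--Alaoglu plus Riesz representation on the compact metric space $[0,\infty]$, with no serious technical obstacle.
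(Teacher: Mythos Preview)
Your argument is correct. The key move --- extending each $\mu_\eps$ trivially to a sub-probability on the compact space $[0,\infty]$ and then invoking Banach--Alaoglu together with separability of $C[0,\infty]$ to get sequential weak-$*$ compactness, followed by Riesz --- is exactly right, and your remark that the limit $\mu$ may charge $\{0\}$ and $\{\infty\}$ is the essential point for the later application.

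The paper reaches the same conclusion but unpacks the Banach--Alaoglu step by hand: it fixes a countable dense set $\{f_k\}\subset C[0,\infty]$, runs a diagonal argument to extract a subsequence along which $\int f_k\,d\mu_{\eps_n}$ converges for every $k$, extends the limit to all of $C[0,\infty]$ by density, and only then applies Riesz. In fact the authors include a remark suggesting that Banach--Alaoglu does not apply directly because the domains of integration differ ($(0,\infty)$ on the left versus $[0,\infty]$ on the right); your extension-by-zero $\tilde\mu_\eps$ shows that this objection is easily circumvented, so your route is both shorter and conceptually cleaner. The paper's explicit diagonal argument, on the other hand, has the pedagogical virtue of being self-contained and of making the mechanism of mass escaping to $0$ or $\infty$ completely transparent.
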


\begin{observacao}\rm
	The space $C[0, \infty]$ corresponds to the space  of continuous functions $f:\bb R_{\Delta,+}\to \bb R$ without the convention that $f(\Delta)=0$. After all,  $\bb R_{\Delta,+} = [0,\infty)\cup \{\Delta\}$ is the one-point compactification of $[0,\infty)$, where $\Delta$ is the ``point at infinity''. 
\end{observacao}

\begin{observacao}\rm
	Since $\bb R_{\Delta,+}$ is  a compact space, the reader may wonder if the Proposition~\ref{weakconvergencemeasures} could be a direct consequence of Banach-Alaoglu's Theorem from Functional Analysis (together with  Riesz Representation Theorem). That is not the case because   the domain of integration in the left hand side of \eqref{eq:f-vague} is $(0,\infty)$ while the domain on the right hand side is $[0,\infty]$. 
\end{observacao}

\begin{proof}[Proof of Proposition~\ref{weakconvergencemeasures}]
	The space $C[0, \infty]$ has a countable dense subset in the supremum norm, which we denote by $\{f_k\}_{k = 1}^{\infty}$. We will show first that there exists a subsequence $\eps_n \searrow 0$ such that the limit
	\begin{equation*}
		\lim_{n \to \infty} \int_{(0, \infty)} f_k(x) \, \mu_{\eps_n}(dx)
	\end{equation*}
	exists, for all $k \in \N$.  We  will proceed via a  diagonal argument. Letting $k = 1$, we have
	\begin{equation*}
		\int_{(0, \infty)} f_1(x) \, \mu_{\eps}(dx) \leq \Vert f_1\Vert_\infty\mu_{\eps}((0, \infty)) \leq \|f_1\|,
	\end{equation*}
	for all $\eps > 0$. Thus, we can find a sequence $\eps^1_n \searrow 0$ such that $\int_{(0, \infty)} f_1(x) \, \mu_{\eps^1_n}(dx)$ converges as $n \rightarrow \infty$. Also, for $k = 2$, we have 
	\begin{equation*}
		\int_{(0, \infty)} f_2(x) \, \mu_{\eps^1_n}(dx) \leq \Vert f_2\Vert_\infty,
	\end{equation*}
	for all $\eps^1_n > 0$, then we can extract a subsequence $(\eps^2_n)$ of $(\eps^1_n)$ such that\linebreak $\int_{0}^{\infty} f_2(x) \, \mu_{\eps^2_n}(dx)$ converges as $n \rightarrow \infty$ with $\eps^2_n \searrow 0$. Applying this successively, we will have found for each $k$ a sequence $(\eps^k_n) \subseteq (\eps^{k-1}_n) \searrow 0$ such that $\int_{0}^{\infty} f_k(x) \, \mu_{\eps^k_n}(dx)$ converges as $n \rightarrow \infty$. 
		Thus, fixing $k$ and taking the diagonal sequence $\eps_n = \eps^n_n$, we obtain
	\begin{equation*}
		\left|\int_{(0, \infty)} f_k(x) \, \mu_{\eps^n_n}(dx) - \int_{(0, \infty)} f_k(x) \, \mu_{\eps^k_n}(dx) \right| \longrightarrow 0,
	\end{equation*}
	as $n \rightarrow \infty$, since for $n$ sufficiently large, we have $(\eps^n_n) \subseteq (\eps^k_n)$ and the second integral converges. This completes the diagonal argument.
	
	Next, we can extend this limit for all $f \in C[0, \infty]$ using the density of $\{f_k\}_{k = 1}^{\infty}$. In fact, for every function $f \in C[0, \infty]$, there exists a subsequence $(f_{k_n})_n$ of $\{f_k\}_{k = 1}^{\infty}$ such that $f_{k_n} \longrightarrow f$ as $n \rightarrow \infty$ in the supremum norm. Then, for all $\eps > 0$, we can choose $n$ large enough such that $\|f_{k_n} - f \|_{\infty} < \eps$. Hence,
	\begin{equation*}
		\left|\int_{(0, \infty)} f(x) \, \mu_{\eps_n}(dx) - \int_{(0, \infty)} f_{k_n}(x) \, \mu_{\eps_n}(dx) \right| \leq \|f_{k_n} - f \|_{\infty} < \eps\,.
	\end{equation*}
	Thus the limit $\lim_{n \to \infty} \int_{0}^{\infty} f(x) \, \mu_{\eps_n}(dx)$ exists for every $f \in C[0, \infty]$, for the same sequence $\eps_n \searrow 0$.
	
	Now, we may define the operator $C[0, \infty] \ni f \mapsto \lim_{n \to \infty} \int_{0}^{\infty} f(x) \, \mu_{\eps_n}(dx)$, which is positive and linear. Also, since $C[0, \infty]$ consists of bounded continuous functions on a compact space, Riesz Representation Theorem ensures that there exists a measure $\mu(dx)$ on $[0, \infty]$ such that
	\begin{equation*}
		\lim_{n \to \infty} \int_{(0, \infty)} f(x) \, \mu_{\eps_n}(dx) = \int_{[0, \infty]} f(x) \, \mu(dx),
	\end{equation*}
	for all $f \in C[0, \infty]$, which completes the proof.
\end{proof}

\begin{observacao}
	\label{weakconv2}
	Note that the same result holds when considering a family $(\mu_{\eps})_{\eps > 0}$ of measures on $(-\infty, 0)$ with $\mu_{\eps}((-\infty, 0)) \leq 1$, for all $\eps > 0$, and the space $C[-\infty, 0]$. 
\end{observacao}

We now come back to the proof of Lemma \ref{quaseFeller}.
\begin{proof}[Proof of Lemma \ref{quaseFeller}]
	First, due to Proposition \ref{prop:jump time}, $T = \inf \{t > 0 : W_t \neq 0\}$ is an exponential random variable with parameter $\lambda \in [0, \infty]$ under $P_0$. We will split the proof into three cases: $\lambda = 0, 0 < \lambda < \infty$, and $\lambda = \infty$.

	\textbf{Case}  $\lambda = 0$. 	Here, $T = \infty$ a.s. and $0$ is a trap, so the process $W_t$ must coincide with the absorbed Brownian motion at $0$. Also, since $0$ is absorbing, we have
	\begin{equation*}
		\msf{L}f(0) = \lim_{t \to 0^+} \frac{E_0[f(W_t)] - f(0)}{t} = 0,
	\end{equation*}
	for any $f \in \mf {D}(\msf{L})$. Thus, by Proposition~\ref{generator} we have that $f''(0-) = f''(0+) = 0$, and choosing $c_1 = c^-_2 = c^+_2 = 0$,  $c_3 = 1$, and $\nu^- = \nu^+ = 0$, the lemma holds trivially. 
	
	\textbf{Case} $0 < \lambda < \infty$. 	We will show that $\frac{1}{2}f''(0+) = -\lambda f(0)$ is the boundary condition for $f \in \mf {D}(\msf{L})$. Again by Proposition \ref{prop:jump time}, the process leaves $0$ jumping to the cemetery. Then, for $f \in C(\R_{\Delta})$, we get
	\begin{align*}
		E_0[f(W_t)] &= E_0[f(W_t)\mathbf{1}_{\{T < t\}}] + E_0[f(W_t)\mathbf{1}_{\{T > t\}}] \\
		&= E_0[f(\Delta)\mathbf{1}_{\{T < t\}}] + E_0[f(0)\mathbf{1}_{\{T > t\}}] \\
		&= f(0)P_0(T > t) \\
		&= f(0)\exp(-\lambda t).
	\end{align*}
	Thus, for $f \in \mf {D}(\msf{L})$,
	\begin{equation*}
		Lf(0) = \lim_{t \to 0^+} \frac{E_0[f(W_t)] - f(0)}{t} =  f(0)\lim_{t \to 0^+} \frac{\exp(-\lambda t) - 1}{t} = -\lambda f(0),
	\end{equation*}
	which implies that $\frac{1}{2}f''(0+) = -\lambda f(0)$, as desired. With this boundary condition, it is enough to choose $c_1 = \frac{\lambda}{1 + \lambda}, c_3 = \frac{1}{1 + \lambda}$, $c^-_2 = c^+_2 = 0$, and $\nu^- = \nu^+ = 0$ to obtain the result.\bigskip

	\textbf{Case}  $\lambda = \infty$. 	In this situation, $T = 0$ a.s., and $0$ is not a trap, so we may compute the generator at $x = 0$ via Dynkin's formula \eqref{teo: Dykin}. Let $A = (-\eps, \eps)$ with $\eps \searrow 0$. Starting from $0$, $W$ exits $A$ at $-\eps, \eps$ or at $\Delta$. Setting $T_{\eps} = \tau_{-\eps} \wedge \tau_{\eps} \wedge \tau_{\Delta}$, Dynkin's formula \eqref{teo: Dykin} becomes		
	\begin{equation}
		\label{eq: gen 0}
		\msf{L}f(0) = \lim_{\eps \searrow 0} \frac{E_0[f(W_{T_{\eps}})] - f(0)}{E_0[T_{\eps}]},
	\end{equation}
	for $f \in \mf {D}(\msf{L})$. Note that
	\begin{align*}
		\frac{E_0[f(W_{T_{\eps}})]}{E_0[T_{\eps}]} &= \int \frac{1}{E_0[T_{\eps}]}f(W_{T_{\eps}}) \, dP_0 \\
		&= \int_{\R_{\Delta}} f(x) \, \frac{1}{E_0[T_{\eps}]}P_0(W_{T_{\eps}} \in dx) = \int_{\R} f(x) \, \frac{1}{E_0[T_{\eps}]}P_0(W_{T_{\eps}} \in dx),
	\end{align*}
	where in the last equality we used that $f$ vanishes at $\Delta$. 	Thus, considering the \emph{exit measure} 
	\begin{equation}
		\label{eq:exitmeas}
		\nu_{\eps}(dx) = \frac{1}{E_0[T_{\eps}]}P_0(W_{T_{\eps}} \in dx)\, \quad \text{on }\R_\Delta\,,
	\end{equation} 
	we can write 
	\begin{equation}
		\label{eqgen}
		\msf{L}f(0) = \lim_{\eps \searrow 0} \,\bigg\{ \bigg[\int_{\R} (f(x) - f(0)) \, \nu_{\eps}(dx)\bigg] -f(0)\nu_{\eps}(\Delta)\bigg\}.
	\end{equation}
	In order to obtain normalized constants as in \eqref{constants normalization}, let
	\begin{equation*}
		K_{\eps} = 1 + \nu_{\eps}(\Delta) + \int_{-\infty}^{0} (1 \wedge -x) \, \nu_{\eps}(dx) + \int_{0}^{\infty} (1 \wedge x) \, \nu_{\eps}(dx).
	\end{equation*}
	To continue, consider the measures
	\begin{align*}
		\mu^-_{\eps}(A) &= \int_A (1 \wedge - x) \,  \frac{\nu_{\eps}(dx)}{K_{\eps}} \text{ on } (-\infty, 0)\qquad \text{ and} \\
		\mu^+_{\eps}(A) &= \int_A (1 \wedge x) \, \frac{\nu_{\eps}(dx)}{K_{\eps}} \text{ on } (0, \infty).
	\end{align*}
	Observe that $\mu^+_{\eps}(dx) \leq 1$ for all $\eps > 0$, hence Proposition \ref{weakconvergencemeasures} assures that there is a measure $\mu^+(dx)$ on $[0, \infty]$ and a subsequence $\eps_n \searrow 0$ such that
	\begin{equation}
		\label{eqweakconv}
		\lim_{n \to \infty} \int_{(0, \infty)} f(x) \, \mu^+_{\eps_n}(dx) = \int_{[0, \infty]} f(x) \, \mu^+(dx),
	\end{equation}
	for all $f \in C[0, \infty]$. In view of Remark \ref{weakconv2}, applying the same argument to $\mu^-_{\eps_n}$ and passing to a further subsequence if necessary, we get a measure $\mu^-(dx)$ on $[-\infty, 0]$ such that
	\begin{equation*}
		\lim_{n \to \infty} \int_{(-\infty, 0)} f(x) \, \mu^-_{\eps_n}(dx) = \int_{[-\infty, 0]} f(x) \, \mu^-(dx),
	\end{equation*} 
	for every $f \in C[-\infty, 0]$.
	
	Let $f \in \mf {D}(\msf{L})$. Since $f \in C(\R_{\Delta})$, we have that the function $\frac{f(x) - f(0)}{1 \wedge x}$ is continuous on $(0, \infty)$ and $\lim_{x \to \infty} \frac{f(x) - f(0)}{1 \wedge x} = -f(0) < \infty$. Also, we know that $f$ is differentiable, so $\lim_{x \to 0^+} \frac{f(x) - f(0)}{1 \wedge x} = \lim_{x \to 0^+} \frac{f(x) - f(0)}{x} = f'(0+) < \infty$. Thus, we can extend $\frac{f(x) - f(0)}{1 \wedge x}$ continuously to $[0, \infty]$.
	
	Using Equation \eqref{eqweakconv} with the function $\frac{f(x) - f(0)}{1 \wedge x} \in C[0, \infty]$ and observing that $1 \wedge x$ is the Radon-Nikodym derivative of $\mu^+_{\eps_n}(dx)$ with respect to $\frac{1}{K_{\eps_n}}\nu_{\eps_n}(dx)$, we get
	\begin{align*}
		\int_{[0, \infty]} \frac{f(x) - f(0)}{1 \wedge x} \, \mu^+(dx) &= \lim_{n \to \infty} \int_{(0, \infty)} \frac{f(x) - f(0)}{1 \wedge x} \, \mu^+_{\eps_n}(dx) \\ 
		&= \lim_{n \to \infty} \int_{(0, \infty)} \frac{f(x) - f(0)}{1 \wedge x} (1 \wedge x) \, \frac{1}{K_{\eps_n}}\nu_{\eps_n}(dx) \\
		&= \lim_{n \to \infty} \int_{(0, \infty)} \big\{f(x) - f(0)\big\} \, \frac{1}{K_{\eps_n}}\nu_{\eps_n}(dx).
	\end{align*}
	Similarly, $\frac{f(x) - f(0)}{1 \wedge -x}$ is continuous on $(-\infty, 0)$, $\lim_{x \to -\infty} \frac{f(x) - f(0)}{1 \wedge -x} = -f(0) < \infty$, and $\lim_{x \to 0^-} \frac{f(x) - f(0)}{1 \wedge -x} = \lim_{x \to 0^-} \frac{f(x) - f(0)}{-x} = -f'(0-) < \infty$. Thus, 
	\begin{equation*}
		\int_{[-\infty, 0]} \frac{f(x) - f(0)}{1 \wedge -x} \, \mu^-(dx) =  \lim_{n \to \infty} \int_{(-\infty, 0)} \big\{f(x) - f(0)\big\} \, \frac{1}{K_{\eps_n}}\nu_{\eps_n}(dx).
	\end{equation*}
Note that since $0 \leq \frac{\nu_{\eps_n}(\Delta)}{K_{\eps_n}} \leq 1$ and $0 < \frac{1}{K_{\eps_n}} \leq 1$, for all $n \in \N$, we may assume
\begin{equation*}
	\lim_{n \to \infty} \frac{\nu_{\eps_n}(\Delta)}{K_{\eps_n}} = p_1 \quad \text{ and }\quad \lim_{n \to \infty} \frac{1}{K_{\eps_n}} = p_2\,,
\end{equation*}
passing to a subsequence if necessary. Therefore, from Equation~\eqref{eqgen} we obtain
	\begin{equation*}
		p_1 f(0) + p_2\msf{L}f(0) - \int_{[-\infty, 0]} \frac{f(x) - f(0)}{1 \wedge -x} \, \mu^-(dx) - \int_{[0, \infty]} \frac{f(x) - f(0)}{1 \wedge x} \, \mu^+(dx) = 0.
	\end{equation*}
	Considering the values of $\frac{f(x) - f(0)}{1 \wedge -x}$ and $\frac{f(x) - f(0)}{1 \wedge x}$ at $0, \pm \infty$, we get
	\begin{align*}
		& (p_1 + \mu^-(-\infty) + \mu^+(\infty))f(0) + \mu^-(0)f'(0-) - \mu^+(0)f'(0+) + p_2\msf{L}f(0) \\ &= \int_{(-\infty, 0)} \frac{f(x) - f(0)}{1 \wedge -x} \, \mu^-(dx) + \int_{(0, \infty)} \frac{f(x) - f(0)}{1 \wedge x} \, \mu^+(dx).
	\end{align*}
	
	Defining the measures $\nu^-(A) = \int_{A} \frac{1}{1 \wedge -x} \mu^-(dx)$ on $(-\infty, 0)$,\linebreak $\nu^+(A) = \int_{A} \frac{1}{1 \wedge x} \mu^+(dx)$ on $(0, \infty)$, and setting $c_1 = p_1 + \mu^-(-\infty) + \mu^+(\infty)$, $c^-_2 = \mu^-(0)$, $c^+_2 = \mu^+(0)$, $c_3 = p_2$ yields
	\begin{align*}
		& c_1f(0) + c^-_2f'(0-) - c^+_2f'(0+) + \frac{c_3}{2}f''(0+) \\&= \int_{(-\infty, 0)} (f(x) - f(0)) \, \nu^-(dx) + \int_{(0, \infty)} (
		f(x) - f(0)) \, \nu^+(dx),
	\end{align*}
	which is \eqref{firstdomain}. It remains to verify Equation~\eqref{constants normalization}.  By integrating against the function that is constant equal to one, we have that $\mu^-([- \infty, 0]) =\break \lim_{n \to \infty} \mu^-_{\eps_n}((-\infty, 0))$ and $\mu^+([0, \infty]) = \lim_{n \to \infty} \mu^+_{\eps_n}((0, \infty))$. Thus,
	\begin{align*}
		& p_1 + p_2 + \mu^-([-\infty, 0]) + \mu^+([0, \infty]) \\ &= \lim_{n \to \infty} \Big(\frac{\nu_{\eps_n}(\Delta)}{K_{\eps_n}} + \frac{1}{K_{\eps_n}} + \mu^-_{\eps_n}((-\infty, 0)) + \mu^+_{\eps_n}((0, \infty))\Big) \\
		&= \lim_{n \to \infty} \frac{\nu_{\eps_n}(\Delta) + 1 + \int_{-\infty}^{0} (1 \wedge -x) \, \nu_{\eps_n}(dx) + \int_{0}^{\infty} (1 \wedge x) \, \nu_{\eps_n}(dx)}{K_{\eps_n}} = \lim_{n \to \infty} \frac{K_{\eps_n}}{K_{\eps_n}} = 1,			
	\end{align*}
	and we finally get
	\begin{align*}
		& c_1 + c^-_2 + c^+_2 + c_3 + \int_{-\infty}^{0} (1 \wedge -x) \, \nu^-(dx) \int_{0}^{\infty} (1 \wedge x) \, \nu^+(dx) \\ &= c_1 + c^-_2 + c^+_2 + c_3 + \mu^-((-\infty, 0)) + \mu^+((0, \infty)) \\
		&= p_1 + p_2 + \mu^-([-\infty, 0]) + \mu^+([0, \infty]) = 1.		
	\end{align*}

	Now, we will show the converse. Let $\mc{D}'$ be the set of functions satisfying Equation \eqref{firstdomain} for $f$ belonging to $C(\R_{\Delta})$ and having second derivatives in $\R \backslash  \{0\}$ which admit an extension in $C(\R_{\Delta})$. We will show that $\mc{D}' \subseteq \mf {D}(\msf{L})$.
	To that end, consider the equation on $\R\backslash \{0\}$, given by
	\begin{equation}
		\label{eqdom}
		\beta f - \frac{1}{2}f'' = g,
	\end{equation}
	for $\beta > 0$ and $g \in C(\R_{\Delta})$. We claim that this equation has a solution in $\mf {D}(\msf{L})$. In fact, by~\cite[Proposition 6.12]{leGall}, using the Feller property established in Proposition~\ref{BMFeller}, we have that $\msf{R}_{\beta}g \in \mf {D}(\msf{L})$ for all $g \in C(\R_{\Delta})$ and the operator $\msf{R}_{\beta}$ is the inverse of $\beta - \msf{L}$. Thus, taking $f = \msf{R}_{\beta}g$, we have
	\begin{equation*}
		\beta f - \frac{1}{2}f'' = (\beta - \msf{L})f = g
	\end{equation*}
	on $\R\backslash \{0\}$, as desired.	
	Now, let $f \in \mc{D}'$, then $\beta f-\frac{1}{2}f''$ is an element of $C(\R_{\Delta})$. Assume that $f$ does not belong to $\mf {D}(\msf{L})$. By the above assertion and the inclusion $\mf {D}(\msf{L}) \subseteq \mc{D}'$ proved before, there are two functions $f_1, f_2 \in \mc{D}'$ satisfying the same equation, that is
	\begin{equation}
		\label{eqdom1}
		\beta (f_1 - f_2) - \frac{1}{2}(f_1'' - f_2'') = 0,
	\end{equation}
	for $x \neq 0$. But we know that the solution for \eqref{eqdom1} is given by
	\begin{equation*}
		f_1(x) - f_2(x) = d_1 \exp(-\sqrt{2\beta}x) + d_2 \exp(\sqrt{2\beta}x),
	\end{equation*}
	for some constants $d_1, d_2 \in \R$. However, $f_1 - f_2$ also goes to zero at infinity. Thus,
	\begin{equation*}
		f_1(x) - f_2(x)	= \begin{cases}
			d_2 \exp(-\sqrt{2\beta}x), \; &\text{ if } x > 0, \\
			d_1 \exp(\sqrt{2\beta}x), \; & \text{ if } x < 0.
		\end{cases}
	\end{equation*}
	
	Moreover, the continuity of $f_1, f_2$ forces $d_1 = d_2 = d$. Also, $d \neq 0$ because $f_1, f_2$ are supposed to be different. Since $f_1, f_2 \in \mc{D}'$, we get
	\begin{align*}
		& c_1\big(f_1(0) - f_2(0)\big) + c^-_2\big(f'_1(0-) - f'_2(0-)\big)\\
		& - c^+_2\big(f'_1(0+) - f'_2(0+)\big) + \frac{c_3}{2}(f''_1(0+) - f''_2(0+)) \\ 
		&= \int_{-\infty}^{0} \Big\{(f_1(x) - f_2(x)) - (f_1(0) - f_2(0)) \Big\}\, \nu^-(dx) \\ 
		&  + \int_{0}^{\infty} \Big\{(f_1(x) - f_2(x)) - (f_1(0) - f_2(0)) \Big\}\, \nu^+(dx).
	\end{align*}
	
	Computing the limits at $0$ for the lateral derivatives of $f_1 - f_2$ and dividing both sides by $d$, we obtain 
	\begin{align*}
		& c_1 + \sqrt{2\beta}(c^-_2 + c^+_2) + c_3\beta \\ &= \int_{-\infty}^{0} \Big\{ \exp(\sqrt{2\beta}x) - 1 \Big\} \, \nu^-(dx) + \int_{0}^{\infty} \Big\{ \exp(-\sqrt{2\beta}x) - 1\Big\} \, \nu^+(dx),
	\end{align*}
	which is a contradiction, since both integrands are non-positive whereas\linebreak $c_1, c^-_2, c^+_2, c_3 \geq 0$, and these constants and the measures $\nu^-(dx), \nu^+(dx)$ cannot be $0$ at the same time, otherwise \eqref{constants normalization} would not be satisfied. Therefore, $f \in \mf {D}(\msf{L})$ and we can conclude.
\end{proof}

We are now ready to prove Theorem~\ref{thm:general_R}. First, we will determine the domain of the infinitesimal generator for a general Brownian motion on $\R_\Delta$. Then, we will show that the operator  defined in Theorem~\ref{thm:general_R} gives us a process which is in fact a general Brownian motion on $\R_\Delta$ with boundary conditions at the origin.

\begin{proof}[Proof of Theorem~\ref{thm:general_R}]
	Let $W$ be a general Brownian motion on $\R_\Delta$ with boundary conditions at the origin. By Lemma \ref{quaseFeller}, it suffices to prove that $\nu^- = \nu^+ = 0$ which we already obtained for $0 \leq \lambda < \infty$. Arguing by contradiction, suppose that $\nu^-(-\infty, -\eps)>0$ and $\nu^+(\eps, \infty) > 0$ for some $\eps > 0$. Then, choosing any $f_1 \in \mf {D}(\msf{L})$, we can slightly modify it to obtain $f_2 = f_1$ on $[-\eps, \eps]$, but $f_2(x) < f_1(x)$ for $|x| > \eps$ so that $f_2$ still belongs to $C(\R_{\Delta})$, $f_2''$ exists in $\R \backslash  \{0\}$ and can be extended to an element of $C(\R_{\Delta})$ (the fact that $f''_2$ has an extension to 0 is obvious since it coincides with $f_1$ near $0$). Thus, according to Lemma \ref{neighb0}, we have $f_2 \in \mf {D}(\msf{L})$. However, since $f_1, f_2$ and its derivatives coincide in $0$, Lemma \ref{quaseFeller} yields
	\begin{align*}
		&\int_{(-\infty, 0)} \big(f_1(x) - f_2(x)\big) \, \nu^-(dx) + \int_{(0,\infty)} \big(f_1(x) - f_2(x)\big) \, \nu^+(dx) \;=\; 0\,.
	\end{align*}
	Hence,
	\begin{align*}
	 \int_{(-\infty, -\eps)} \big(f_1(x) - f_2(x)\big) \, \nu^-(dx) \;=\; - \int_{(\eps,\infty)} \big(f_1(x) - f_2(x)\big) \, \nu^+(dx)\,.
	\end{align*}
Since both integrands are positive, we have that
	\begin{align*}
		&\int_{(-\infty, -\eps)} \big(f_1(x) - f_2(x)\big) \, \nu^-(dx) \;=\; \int_{(\eps,\infty)} \big(f_1(x) - f_2(x)\big) \, \nu^+(dx) \;=\; 0 
	\end{align*}
	implying that $(f_1- f_2)\mathbf{1}_{\{(-\infty, \eps)\}} = 0$ 
	$ \nu^-$-a.e.\  and  $(f_1- f_2)\mathbf{1}_{\{(\eps, \infty)\}} = 0$  $\nu^+$-a.e.,  	which is not possible since both $f_1 - f_2 $ and $\nu^-$ are positive on $(- \infty, \eps)$, and both $f_1 - f_2 $ and $\nu^+$ are positive on $(\eps, \infty)$.
	
	Therefore, $\nu^-(-\infty, -\eps) = \nu^+(\eps, \infty) = 0$ for all $\eps > 0$. Letting $\eps \searrow 0$ we get $\nu^-(-\infty, 0) = \nu^+(0, \infty) = 0$, as desired.
	The expression of the generator was proven in Proposition~\ref{generator}.
For the converse, we will show that the operator $\msf{L}$ on $C(\R_{\Delta})$ with domain $\mf {D}(\msf{L})$ satisfies the hypothesis of the Hille-Yosida Theorem \ref{thm:Hille-Yosida}. This will imply that it will be the generator of a strongly continuous contraction semigroup on $C(\R_{\Delta})$, which is therefore a Feller process. A computation will then show that the semigroup constructed in that way is the semigroup of a general Brownian motion. First, we claim that $\mc{R}(\lambda - \msf{L}) = C(\R_{\Delta})$, for all $\lambda > 0$. Indeed, let $\lambda > 0$ and $g \in C(\R_{\Delta})$. We know that the equation $\lambda f - \frac{1}{2}f'' = g$ has a solution given by
		\begin{equation*}
		f_p(x) = \int \frac{1}{\sqrt{2\lambda}} \exp(-\sqrt{2\lambda}|x - y|) g(y) \, dy,
	\end{equation*}
	which is the resolvent operator of a standard Brownian motion.
	
	Define 
	\begin{equation*}
		f(x) = \begin{cases}
			f_p(x) + A\exp(-\sqrt{2\lambda}x)\,, & \text{ if } x \geq 0,\vspace{3pt} \\
			f_p(x) + A\exp(\sqrt{2\lambda}x) \,, & \text{ if } x < 0.
		\end{cases}
	\end{equation*}
	In that way, $f \in C(\R_{\Delta})$ and $f''$ exists in $\R \backslash  \{0\}$ extending to an element of $C(\R_{\Delta})$. Moreover, for $f$ to satisfy the domain's equation 
	\begin{equation}
		\label{domain's equation R}
		c_1f(0) + c^-_2f'(0-) - c^+_2f'(0+) + \frac{1}{2}c_3f''(0+) = 0
	\end{equation} 
	it suffices to choose $A$ via
	\begin{equation*}
		A = - \frac{c_1f_p(0) + (c^-_2 - c^+_2)f_p'(0) + \frac{1}{2}c_3f_p''(0)}{c_1 + \sqrt{2\lambda}(c^-_2 + c^+_2) + \lambda c_3}.
	\end{equation*}
	
	Thus, $f \in \mf {D}(\msf{L})$. Furthermore, for $x > 0$ or $x < 0$, we have that $\lambda f - \msf{L}f = \lambda f - \frac{1}{2}f'' = g$. Now, for $x = 0$, we have $(\lambda f - \msf{L}f)(0) = \lambda f(0) - \frac{1}{2}f''(0+) = \lambda f_p(0) + \lambda A  - \frac{1}{2}(f_p''(0) + 2\lambda A) = g(0)$. Hence, $g$ belongs to $\mc{R}(\lambda - \msf{L})$. The other inclusion is clear, so the claim is proved. 
	
	Now, we will prove that $\msf{L}$ is dissipative. Let $f \in \mf {D}(\msf{L})$. We may assume that $f \not\equiv 0$. Since $f$ belongs to $C(\R_{\Delta})$, there is $x_1\in\bb R$ such that $|f|$ attains a global maximum in $x_1$. We consider the following cases.
	
	Suppose $x_1 \neq 0$. If $f(x_1) > 0$, then $f(x_1)$ is also a maximum of $f$, and since $f$ is twice differentiable in a neighbourhood of $x_1$, we have that $f''(x_1) \leq 0$. Thus,
		\begin{equation*}
		\Vert \lambda f - \msf{L}f \Vert_\infty \geq \lambda f(x_1) - \frac{1}{2}f''(x_1) \geq \lambda f(x_1) \geq \lambda|f(x)|, \forall \, x \in \R,
	\end{equation*} 
	which implies that $\Vert  \lambda f - \msf{L}f \Vert_\infty \geq \lambda \Vert f \Vert_\infty$. If $f(x_1) < 0$, then $f(x_1)$ is actually a minimum of $f$, and the argument follows similarly noting that $f''(x_1) \geq 0$.
	
For $x_1 = 0$ and $f(0) > 0$, we again have that $0$ is a point of maximum of $f$, and even $f$ might not be differentiable at $0$, we still have that $f'(0-) \geq  0$ and $f'(0+) \leq  0$. Then, the domain's equation \eqref{domain's equation R} forces $f''(0+) \leq 0$, and we can proceed as above. The argument for the case $f(0) < 0$ is analogous, noting that $f(0)$ will be a minimum of $f$.
	
	Finally, we will show that $\mf {D}(\msf{L})$ is dense in $C(\R_{\Delta})$. Let $f \in C(\R_{\Delta})$ and consider the following shift of $f$:
		\begin{equation*}
			f_{\eps} = f(x - \eps)\mathbf{1}_{\{x \geq \eps\}} + f(x + \eps)\mathbf{1}_{\{x \leq -\eps\}} + f(0)\mathbf{1}_{\{|x| < \eps\}}.
		\end{equation*}
Take now $\phi_{\eps}$ a standard approximation of the identity which has a support centred at the origin with radius $\eps$, and consider $f_{\eps} \ast  \phi_{\eps/2}$, which we define to be zero in $\Delta$ and belongs to $C^{\infty}(\R) \cap C(\R_{\Delta})$. By the uniform continuity of $f$, $f_{\eps} \ast  \phi_{\eps/2} \to f$ uniformly as $\eps \rightarrow 0$. Moreover, since $f_\eps$ is constant equal to $f(0)$ in a neighbourhood of zero, we have that $f_{\eps} \ast  \phi_{\eps/2}$ is also constant equals to $f(0)$ around zero. In particular, the derivatives of $f_{\eps} \ast  \phi_{\eps/2}$ vanish at  zero.

Now, we want to correct $f_{\eps} \ast  \phi_{\eps/2}$ around $0$ so that the new corrected function belongs to $\mf {D}(\msf{L})$, while still approximating $f$. To do so, we choose $P$ a polynomial such that 
\begin{equation*}
	P(x) = \begin{cases}
		ax + bx^2\,,& \text{if } x \geq 0, \\
		\tilde{a}x + bx^2\,, &\text{if } x \leq 0,
	\end{cases}
\end{equation*}
for some $\tilde{a}, a, b \in \R$ to be properly chosen. Taking a bump function $\varphi_{\eps} \in C^{\infty}(\R)$ so that $\varphi_{\eps} \equiv 1$ in $(-\eps/2, \eps/2)$ and $\varphi_{\eps}$ vanishes on $\R \backslash (-\eps, \eps)$, we get that
$P\varphi_{\eps} \longrightarrow 0$ uniformly as $\eps \rightarrow 0$, since $P(0) = 0$. Note also that $P\varphi_{\eps} \in C(\R_{\Delta})$ is twice differentiable in $\R \backslash  \{0\}$, and its second derivative admits an extension to an element of $C(\R_{\Delta})$. In particular, the sum $g_{\eps} = f_{\eps} \ast  \phi_{\eps/2} + P\varphi_{\eps}$ keeps this property. Also, a small computation yields that
\begin{equation*}
c_1g_\eps(0) + c^-_2g_\eps'(0-) - c^+_2g_\eps'(0+) + \frac{1}{2}c_3g_\eps''(0+) =c_1f(0) + c^-_2\tilde{a} - c^+_2a + c_3b\,.
\end{equation*}
We can always choose $\tilde{a}, a, b$ so that the right hand side of the above equation is equal to zero, since $c_1, c^-_2, c^+_2, c_3, f(0)$ are fixed, and the case $c^-_2 = c^+_2 = c_3 = 0$ is excluded due to $c_1 \neq 1$. Hence, $g_{\eps} \in \mf {D}(\msf{L})$ and $g_{\eps} \longrightarrow f$ uniformly as $\eps \rightarrow 0$. An application of Proposition~\ref{Equal_BM} completes the proof.
\end{proof}

\section{The most general BM on \texorpdfstring{$\bb G_{\Delta}$}{G Delta}-- Proof of Theorem~\ref{teo:5.5}}\label{s4}
\label{section:BMG}

Since each half-line $(-\infty, 0-]$ and $[0+, \infty)$ can be identified with the standard negative and positive half-lines some results from the previous section  remain in force in this setting. We will state them now.

First, the hitting time of $0$ for Brownian motion is finite almost surely. Moreover, since Brownian motions do not explode in finite time, $W_t$ cannot reach $\Delta$ before~$0-$ or $0+$, which by Definition~\ref{BMonE} are the only points of discontinuity. Also, by Proposition~\ref{prop:jump time} we have that $T_+ = \inf \{t > 0 : W_t \neq 0+\}$ and $T_- = \inf \{t > 0 : W_t \neq 0+\}$ are exponentially distributed under the probabilities $P_{0+}, P_{0-}$, with parameters $\lambda_+$ and $\lambda_-$, respectively. We distinguish the following cases depending on the value of $\lambda_+$. The situation for $\lambda_-$ is analogous.\bigskip

 \textbf{Case 1:} $0 < \lambda_+ < \infty$. 	In this case, by Proposition \ref{prop:jump time}, the process leaves $0+$ via a jump, and the second condition in Definition \ref{BMonE} enables $W$ only to jump to $0-$ or to $\Delta$. Hence, at time $T_+$, the process is killed or arises on the opposite half-line.
	
 \textbf{Case 2:} $\lambda_+ = \infty$.	Here, $P_{0+}(T_+ = 0) = 1$ and the process leaves $0+$ at once. Since the process starts at $0+$ under $P_{0+}$ and the process is càdlàg,  thus right continuous,  the process cannot go immediately from $0+$ to $\Delta$ or $0-$, since this would be a jump and the process would be left-continuous but not right-continuous. 
	
\textbf{Case 3:} $\lambda_+ = 0$.	
	Here, we have $T_+ = \infty$ a.s. and $0+$ is an absorbing point.

Moreover, we can prove that a general Brownian motion on $\bb G_\Delta$ is a Feller process by following the same proof as in Proposition \ref{BMFeller}. One step that is more delicate is to show the continuity of $E_x[f(W_t)]$ in $x \in \bb G_{\Delta}$, for fixed $t \geq 0$ and $f \in C(\bb G_{\Delta})$. Indeed, we can use the same coupling as in \eqref{eq: coupling}, but now we need to be more careful about the discontinuities in the sample paths of $W_t$. To illustrate the argument, consider $W^x_t$ as in \eqref{eq: coupling} for $x \in [0+, \infty)$. In particular, $W^0_t$ refers to a general Brownian on $\bb G_\Delta$ starting from $0+$. Since the sample paths of $W^0_t$ are càdlàg, their discontinuities are at most countable and occur at the stopping times $\tau^n, n \in \N$ given by
\begin{equation*}
\tau^1 = \inf \{s \geq 0 : W^0_s \in \{0-, \Delta\}\}, \tau^2 = \inf \{s \geq \tau^1 : W^0_s \in \{0+, \Delta\}\}, 
\end{equation*}
and so on with the usual convention that the infimum over the empty set is infinity and that $\tau^{n+1}=\infty$ if $\tau^n=\infty$. With this setting and Remark~\ref{continuousrv}, one can apply the same techniques as in the proof of Proposition \ref{BMFeller}.

The reader should also notice that Proposition \ref{generator} was proved considering $x \in [0, \infty)$ or $x \in (-\infty, 0]$ separately and was based mostly on analytic arguments, so it adapts easily to Brownian motion on the state space $\bb G_\Delta$. In fact, in this case, we can say even more.

\begin{proposicao}
	Let $f \in \mf {D}(\msf{L})$, the domain of a general Brownian motion on $\bb G_\Delta$ with boundary conditions at the origin. Then, $f$ is twice differentiable with $f'' \in C(\bb G_\Delta)$. Moreover,
	\begin{equation}
		\label{genBME}
		\msf{L}f(x) = \frac{1}{2}f''(x),
	\end{equation}
for $x \in \bb G$ and $\msf{L}f(\Delta) = 0$.
\end{proposicao}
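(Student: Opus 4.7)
The plan is to mirror the argument of Proposition~\ref{generator} almost verbatim, splitting the state space $\bb G_\Delta$ into the three regions where the proof of Proposition~\ref{generator} applies: the interiors of the two half-lines $(-\infty,0-)$ and $(0+,\infty)$, the boundary points $\{0+,0-\}$, and the cemetery $\{\Delta\}$. The key observation is that for $x$ in the interior of either half-line, there is a neighborhood of $x$ contained entirely in $\bb G\setminus\{0+,0-\}$ on which, by the fourth bullet of Definition~\ref{BMonE}, the process $W$ started at $x$ agrees in law with a standard Brownian motion until it exits this neighborhood. This is exactly the setup needed to run the Dynkin-formula computation used in Proposition~\ref{generator}.

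First, fix $x \in \bb G$ with $x \notin \{0+, 0-\}$, and without loss of generality assume $x \in (0+, \infty)$ (the case $x \in (-\infty, 0-)$ is symmetric, identifying the half-line with $(-\infty,0]$ as in Definition~\ref{BMonE}). Choose $h_1, h_2 > 0$ so small that the interval $(x-h_1, x+h_2)$ lies in $(0+,\infty)$. Since $W$ behaves as a standard Brownian motion until it exits such an interval, Dynkin's formula~\eqref{teo: Dykin} can be evaluated exactly as in~\eqref{DynkinformulaWt}, and the same limit computation shows first that $f'_-(x) = f'_+(x)$ exists and then, via the integration and Taylor argument of Proposition~\ref{generator}, that $f$ is twice differentiable at $x$ with $\msf{L}f(x) = \tfrac{1}{2}f''(x)$.

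Next, treat the boundary points $0+$ and $0-$ by continuity rather than by a direct Dynkin computation. Since $f \in \mf{D}(\msf{L})$, the function $\msf{L}f$ lies in $C(\bb G_\Delta)$, hence is continuous at $0+$ along $(0+,\infty)$ and continuous at $0-$ along $(-\infty,0-)$. Combining this with the identity $\msf{L}f(y) = \tfrac{1}{2}f''(y)$ just established for $y$ in the interiors, the limits
\begin{equation*}
    f''(0+) \;:=\; \lim_{y \downarrow 0+} f''(y) \;=\; 2\msf{L}f(0+), \qquad f''(0-) \;:=\; \lim_{y \uparrow 0-} f''(y) \;=\; 2\msf{L}f(0-)
\end{equation*}
exist and define continuous extensions of $f''$ to the whole of $\bb G$. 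Thus $f'' \in C(\bb G_\Delta)$ (with the convention $f''(\Delta) = 0$), and the identity $\msf{L}f(x) = \tfrac{1}{2}f''(x)$ extends by continuity to all $x \in \bb G$.

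Finally, $\Delta$ is an absorbing point by Definition~\ref{BMonE}, so $P_\Delta(W_t = \Delta) = 1$ for every $t \geq 0$ and $E_\Delta[f(W_t)] = f(\Delta)$, whence $\msf{L}f(\Delta) = 0$ trivially. The main technical point is ensuring, at the step that passes from $\msf{L}f(x) = \tfrac{1}{2}f''(x)$ on the interior to a statement about $f''(0\pm)$, that the one-sided limits really do produce an element of $C(\bb G_\Delta)$; but this is automatic from the Feller property inherited from the analogue of Proposition~\ref{BMFeller}, which guarantees $\msf{L}f \in C(\bb G_\Delta)$ and, in particular, that $\msf{L}f$ has distinct, well-defined values at $0+$ and $0-$.
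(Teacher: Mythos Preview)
Your treatment of the interior points $x \in \bb G \setminus \{0+,0-\}$ and of the cemetery $\Delta$ is correct and matches the paper. At the boundary points $0+$ and $0-$, however, there is a gap. You \emph{define} $f''(0+) := \lim_{y \downarrow 0+} f''(y)$ as a continuous extension; this shows that $f''$ admits an extension in $C(\bb G_\Delta)$, but it does not show that $f$ is actually twice differentiable at $0+$ in the sense that the one-sided second derivative $\lim_{h\searrow 0}\frac{f'(0+\!+h)-f'(0+)}{h}$ exists and equals that limit. The statement explicitly claims twice differentiability on all of $\bb G$, and the paper stresses that in the $\bb G_\Delta$ setting ``we can say even more'' than in Proposition~\ref{generator}, where only the extension was obtained.

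The missing step is a two-fold application of the mean value theorem, which is precisely what the paper supplies. First, boundedness of $f''$ near $0+$ (which you do establish) forces $\lim_{c\searrow 0+} f'(c)$ to exist; since $f$ is continuous on $[0+,h]$ and differentiable on $(0+,h)$, the mean value theorem identifies this limit with the one-sided derivative $f'(0+)$, so $f'$ is continuous at $0+$. Second, repeat the argument with $f'$ in place of $f$: now $f'$ is continuous on $[0+,h]$ and differentiable on $(0+,h)$ with $\lim_{c\searrow 0+} f''(c) = 2\msf{L}f(0+)$, so the mean value theorem gives that the one-sided second derivative $f''(0+)$ exists and equals $2\msf{L}f(0+)$. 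Your ``extends by continuity'' sentence does not by itself carry this content.
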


\begin{proof}
	In view of our previous observations, it remains only to prove that
	\begin{equation*}
		\msf{L}f(0+) = \frac{1}{2}f''(0+), \quad \msf{L}f(0-) = \frac{1}{2}f''(0-)
	\end{equation*}
	for $f \in \mf {D}(\msf{L})$, where the second and first derivatives at $0+$ and $0-$ are interpreted as the right-hand derivative and left-hand derivatives, respectively.
	
	To prove that, we give a completely analytical argument. First, we identify $[0+, \infty]$ with the interval $[0, \infty]$, and apply the mean value theorem to obtain
	\begin{equation*}
		f'_{+}(0) = \lim_{h \searrow 0} \frac{f(h) - f(0)}{h} = \lim_{c \searrow 0} f'(c),
	\end{equation*}
	as was done in Remark~\ref{rem:dif}. 
	Returning to the notation $[0+, \infty]$, we thus have
	\begin{equation*}
		f'(0+) = \lim_{h \searrow 0+} \frac{f(h) - f(0+)}{h} = \lim_{c \searrow 0+} f'(c),
	\end{equation*}
	which shows in particular that $f'$ is continuous in $0+$.
		We now apply the same reasoning again. We have that $f'$ is continuous on $[0, h]$ and differentiable on $(0, h)$, so we can find $c = c(h) \in (0, h)$ such that
	\begin{equation*}
		f''(c) = \frac{f'(h) - f'(0)}{h}.
	\end{equation*}
	We already know that $\lim_{c \searrow 0} f''(c) = 2\msf{L}f(0)$, so we deduce that
	\begin{equation*}
		f''_{+}(0) = \lim_{h \searrow 0} \frac{f'(h) - f'(0)}{h} = 2\msf{L}f(0), 
	\end{equation*}
	i.e.,
	\begin{equation*}
		f''(0+) = \lim_{h \searrow 0+} \frac{f'(h) - f'(0+)}{h} = 2\msf{L}f(0+).
	\end{equation*}
	An identical argument applied to the interval $(-\infty, 0-]$ shows that $f''(0-) = 2\msf{L}f(0-)$.
\end{proof}

The next result states that to show that a function is an element of $\mf {D}(\msf{L})$, it is sufficient to know its behavior in a neighborhood of $0+$ and  $0-$. The proof is almost identical to that of Lemma \ref{neighb0}, and is therefore omitted.

\begin{lema}
	\label{neighb0E}
	If $f_1 \in \mf {D}(\msf{L})$, $f_2, f_2'' \in C(\bb G_\Delta)$, and $f_1 = f_2$ in $(-\eps, 0-] \cup [0+, \eps)$, for some $\eps > 0$, then $f_2 \in \mf {D}(\msf{L})$.
\end{lema}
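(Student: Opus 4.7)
The plan is to verify, as in the proof of Lemma~\ref{neighb0}, that
\begin{equation*}
\lim_{t \to 0^+} \frac{E_x[f_2(W_t)] - f_2(x)}{t}
\end{equation*}
exists and that the convergence is uniform in $x \in \bb G_\Delta$. Once this is established, $f_2$ belongs to $\mf D(\msf L)$ and the value of the generator on $f_2$ coincides with $\tfrac12 f_2''$ on $\bb G$ and with $0$ at $\Delta$, which lies in $C(\bb G_\Delta)$ by hypothesis.

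First I would split $\bb G_\Delta$ into a \emph{far} region $R_{\rm far}=\{x\in\bb G: \operatorname{dist}(x,\{0+,0-\})>\eps/2\}\cup\{\Delta\}$ and a \emph{near} region $R_{\rm near}=(-\eps/2,0-]\cup[0+,\eps/2)$. For $x\in R_{\rm far}$ the point $x$ lies in a single half-line, and until $W$ hits $\{0+,0-\}$ it coincides in law with a standard Brownian motion $B^0$ on that half-line absorbed at the origin (by the last item of Definition~\ref{BMonE}). As in Lemma~\ref{neighb0},
\begin{equation*}
\Big|\tfrac{E_x[f_2(W_t)]-f_2(x)}{t}-\tfrac{E_x[f_2(B^0_t)]-f_2(x)}{t}\Big|\;\le\;2\|f_2\|_\infty\,\tfrac{P_x(\tau_{0\pm}<t)}{t},
\end{equation*}
and the reflection-principle estimate in the spirit of \eqref{estimativa hitting time 0} gives $P_x(\tau_{0\pm}<t)=o(t^n)$ uniformly in $\{|x|>\eps/2\}$. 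Although $f_2''(0\pm)$ need not vanish (so $f_2$ is not in the domain of the absorbed generator), the same short computation used in Lemma~\ref{neighb0} still yields that $(E_x[f_2(B^0_t)]-f_2(x))/t$ converges uniformly to $\tfrac12 f_2''(x)$ on $R_{\rm far}$.

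For $x\in R_{\rm near}$ I would use that $f_1=f_2$ on $(-\eps,0-]\cup[0+,\eps)$: letting $T_\eps$ be the first exit time of $W$ from this neighbourhood,
\begin{equation*}
\Big|\tfrac{E_x[f_2(W_t)]-f_2(x)}{t}-\tfrac{E_x[f_1(W_t)]-f_1(x)}{t}\Big|\;\le\;2\|f_1-f_2\|_\infty\,\tfrac{P_x(T_\eps\le t)}{t}.
\end{equation*}
To bound $P_x(T_\eps\le t)$ uniformly in $x\in R_{\rm near}$, introduce the intermediate stopping time $T_{\eps/2}=\inf\{s\ge 0:W_s\in\{-\eps/2,\eps/2,\Delta\}\}$. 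By continuity of paths away from $\{0+,0-,\Delta\}$ one has $\mathbf 1_{\{T_\eps<t\}}\le \mathbf 1_{\{T_\eps<t\}}\circ\theta_{T_{\eps/2}}$, so the strong Markov property reduces the estimate to $P_{\pm\eps/2}(T_\eps<t)$, which in turn is dominated by the hitting time of $\{0,\eps\}$ (respectively $\{-\eps,0\}$) by a standard Brownian motion. The latter is $o(t^n)$ for every $n$ as in~\eqref{eq:smallt}. Since $f_1\in\mf D(\msf L)$, the ratio with $f_1$ converges uniformly to $\tfrac12 f_1''(x)=\tfrac12 f_2''(x)$ on $R_{\rm near}$, and the two estimates glue on the overlap.

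The main obstacle is that here $W$ may jump between $0+$ and $0-$ or to $\Delta$, so the uniformity of the exit estimate near the origin is the delicate point; inserting the intermediate stopping time at $\{-\eps/2,\eps/2,\Delta\}$ and using the strong Markov property is precisely what localises the argument back to the standard Brownian hitting-time bounds of Lemma~\ref{neighb0}. Apart from this step, every remaining argument is a verbatim adaptation of the proof of Lemma~\ref{neighb0}, which is why the authors omit the details.
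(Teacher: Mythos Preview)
Your proposal follows exactly the route the paper intends: the authors omit the proof precisely because it is a straightforward adaptation of Lemma~\ref{neighb0}, and your sketch carries out that adaptation, correctly isolating the one new feature (possible jumps of $W$ among $0+$, $0-$, and $\Delta$) and handling it via the intermediate stopping time and the strong Markov property.

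There is one small bookkeeping slip. You take $T_\eps$ to be the \emph{exit} time from $(-\eps,0-]\cup[0+,\eps)$, which includes hitting $\Delta$; after the strong Markov step at $T_{\eps/2}$ the case $W_{T_{\eps/2}}=\Delta$ then contributes $P_\Delta(T_\eps<t)=1$, which is not small in $t$. The repair is immediate: since $(f_1-f_2)(\Delta)=0$, the quantity you actually need to control is $P_x(\tau_{\{-\eps,\eps\}}\le t)$, the hitting time of $\{-\eps,\eps\}$ \emph{excluding} $\Delta$. On the event $\{\tau_{\{-\eps,\eps\}}<t\}$ the process survives long enough to reach $\pm\eps$, so necessarily $W_{T_{\eps/2}}\in\{-\eps/2,\eps/2\}$ and the reduction to $P_{\pm\eps/2}(\tau_{\{-\eps,\eps\}}<t)$, followed by comparison with the Brownian hitting time of $\{0,\eps\}$ (respectively $\{-\eps,0\}$), goes through exactly as in Lemma~\ref{neighb0}.
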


To obtain the domain's boundary conditions at $0+$ and $0-$, we proceed similarly as we did for the general Brownian Motion on $\R_\Delta$, expressing them first in terms of measures on $(-\infty, 0-]$ and $[0+, \infty)$. The proof is close to the one for general Brownian motion on $\R_\Delta$. However, since it is very technical and adaptations are needed we give a complete proof. 
\begin{lema}
	\label{quaseFellerE}
	There exists non-negative constants $a^+, a^-, c_i^+, c_i^-$, $i = 1, 2, 3$ and measures $\nu_j^+$ on $(0+, \infty)$, $\nu_j^-$ on $(-\infty, 0-)$, $j = 1, 2$ for which
	\begin{align}
		\label{constants E +}
		c_1^+ + a^+ + c_2^+ + c_3^+ + \int_{(0+, \infty)} (1 \wedge x) \, \nu_1^+(dx) + \nu_1^-((-\infty, 0-)) &= 1\,, \\
		c_1^- + a^- + c_2^- + c_3^- + \int_{(-\infty, 0-)} (1 \wedge -x) \, \nu_2^-(dx) + \nu_2^+((0+, \infty)) &= 1\,, \label{constants E -}
	\end{align}
	such that $f$ belongs to $\mf {D}(\msf{L})$ if and only if $f \in C(\bb G_\Delta)$ is twice differentiable with $f''\in C(\bb G_\Delta)$, and it satisfies
	\begin{align}
		\label{firstdomainE}
		& \nonumber c_1^+f(0+) + a^+(f(0+) - f(0-)) - c_2^+f'(0+) + \frac{c_3^+}{2}f''(0+) \\
		&=  \int_{(0+, \infty)} (f(x) - f(0+)) \, \nu_1^+(dx) + \int_{(-\infty, 0-)} (f(x) - f(0+)) \, \nu_1^-(dx), \\ \nonumber
		\\ \label{firstdomainE2}
		& \qquad\text{and}\nonumber \\
		& \nonumber c_1^-f(0-) + a^-(f(0-) - f(0+)) + c_2^-f'(0-) + \frac{c_3^-}{2}f''(0-) \\
		&= \int_{(0+, \infty)} (f(x) - f(0-)) \, \nu_2^+(dx) + \int_{(-\infty, 0-)} (f(x) - f(0-)) \, \nu_2^-(dx).
	\end{align}
\end{lema}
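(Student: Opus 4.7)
The plan is to adapt the strategy of Lemma~\ref{quaseFeller} to the two-boundary setting, producing equation~\eqref{firstdomainE} at $0+$ and, by a completely symmetric argument, equation~\eqref{firstdomainE2} at $0-$. By Proposition~\ref{prop:jump time}, the holding time $T_+$ at $0+$ is exponential with parameter $\lambda_+\in[0,\infty]$, so I first dispose of the two easy cases: if $\lambda_+=0$ then $0+$ is a trap, so $\msf{L}f(0+)=\tfrac12 f''(0+)=0$ and~\eqref{firstdomainE} holds with $c_3^+=1$ and every other constant and measure equal to zero; if $0<\lambda_+<\infty$, then $W$ leaves $0+$ by jumping to either $0-$ (with some probability $p_{0-}^+$) or $\Delta$ (with probability $p_\Delta^+$), yielding $\msf{L}f(0+)=-\lambda_+ p_\Delta^+ f(0+)+\lambda_+ p_{0-}^+(f(0-)-f(0+))$ and hence~\eqref{firstdomainE} with $c_1^+=\lambda_+p_\Delta^+$, $a^+=\lambda_+p_{0-}^+$, $c_3^+=1$, and all other parameters zero.

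The main case is $\lambda_+=\infty$. Here I apply Dynkin's formula~\eqref{teo: Dykin} at $0+$ with the shrinking neighbourhood $A_\eps=(-\eps,0-]\cup[0+,\eps)$ and exit time $T_\eps^+=\tau_{-\eps}\wedge\tau_\eps\wedge\tau_\Delta$. Setting $\nu_\eps^+(dx)= \frac{1}{E_{0+}[T_\eps^+]}P_{0+}(W_{T_\eps^+}\in dx)$ on $\bb G_\Delta$ and arguing as in~\eqref{eqgen} gives
\[
\msf{L}f(0+) \;=\; \lim_{\eps\to 0}\Big[\int_{\bb G}(f(x)-f(0+))\,\nu_\eps^+(dx)\;-\;f(0+)\,\nu_\eps^+(\{\Delta\})\Big].
\]
The key new feature, absent from Lemma~\ref{quaseFeller}, is that in the limit the exit measure on the opposite half-line will contribute both a continuous part (the future $\nu_1^-$) and an atom at $0-$, the latter producing the non-local cross-coupling $a^+(f(0+)-f(0-))$. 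I therefore normalize by
\[
K_\eps^+ \;=\; 1+\nu_\eps^+(\{\Delta\})+\int_{(0+,\infty)}(1\wedge x)\,\nu_\eps^+(dx)+\nu_\eps^+((-\infty,0-]),
\]
weighting the opposite half-line by $1$ rather than $1\wedge(-x)$: since transitions from $0+$ into the interior of the other half-line are genuine jumps and not diffusive motion, no rescaling near $0-$ is needed. Then I apply Proposition~\ref{weakconvergencemeasures} and Remark~\ref{weakconv2} to the normalized restrictions on $[0+,\infty]$ and $[-\infty,0-]$, extract a subsequence, and test against $\frac{f(x)-f(0+)}{1\wedge x}$ (respectively against $f(x)-f(0+)$, which already extends continuously to $[-\infty,0-]$ with boundary values $-f(0+)$ at $-\infty$ and $f(0-)-f(0+)$ at $0-$). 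Reading off the atoms of the limit measures at $0+$, $0-$, $\pm\infty$ and $\Delta$ yields the constants $c_2^+$, $a^+$, $c_1^+$ and the integrals against $\nu_1^\pm$; the surviving $1/K_\eps^+$ prefactor of $\msf{L}f(0+)$ becomes $c_3^+$, and~\eqref{constants E +} is automatic. Running the same procedure starting from $0-$ yields~\eqref{firstdomainE2}--\eqref{constants E -}.

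For the converse inclusion $\mc{D}'\subseteq\mf{D}(\msf{L})$, I follow the resolvent argument from the end of the proof of Lemma~\ref{quaseFeller}. Given $\beta>0$ and $g\in C(\bb G_\Delta)$, the classical Brownian resolvent $f_p$ solves $\beta f_p-\tfrac12 f_p''=g$ on $\bb G$; adding $A_+ e^{-\sqrt{2\beta}x}$ on $[0+,\infty)$ and $A_- e^{\sqrt{2\beta}x}$ on $(-\infty,0-]$ yields a two-parameter family of solutions in $C(\bb G_\Delta)$, and the two boundary conditions~\eqref{firstdomainE}--\eqref{firstdomainE2} impose a $2\times 2$ linear system on $(A_+,A_-)$ whose solvability I verify. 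If some $f\in\mc{D}'$ failed to lie in $\mf{D}(\msf{L})$, the difference $h=f-\msf{R}_\beta((\beta-\msf{L})f)$ would be a nonzero element of $\mc{D}'$ solving the homogeneous ODE, hence of the form $d_+ e^{-\sqrt{2\beta}x}$ on $[0+,\infty)$ and $d_- e^{\sqrt{2\beta}x}$ on $(-\infty,0-]$ and satisfying the homogeneous versions of~\eqref{firstdomainE}--\eqref{firstdomainE2}. Substitution produces a homogeneous $2\times 2$ system in $(d_+,d_-)$ whose off-diagonal entries are $-(a^++\int e^{\sqrt{2\beta}x}\nu_1^-(dx))$ and $-(a^-+\int e^{-\sqrt{2\beta}x}\nu_2^+(dx))$, and whose diagonal entries exceed the respective off-diagonals by $c_1^\pm+\sqrt{2\beta}\,c_2^\pm+\beta c_3^\pm+\int(1-e^{\mp\sqrt{2\beta}x})\nu_j^\pm$. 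The normalizations~\eqref{constants E +}--\eqref{constants E -} rule out simultaneous vanishing of all these terms, yielding strict diagonal dominance and forcing $d_+=d_-=0$.

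I expect the main obstacle to be the bookkeeping in the case $\lambda_+=\infty$. Unlike on $\bb R_\Delta$, the limiting exit measure may place an atom at the \emph{opposite} boundary point $0-$, producing the non-local cross term $a^+(f(0+)-f(0-))$; choosing the correct weight on the opposite half-line in $K_\eps^+$ is crucial to capture this atom with finite limiting mass, and the asymmetric normalization in~\eqref{constants E +}--\eqref{constants E -} reflects this asymmetry between diffusive and jump contributions. The converse is also more delicate than on $\bb R_\Delta$ because the homogeneous kernel of $\beta-\tfrac12\partial^2$ on $\bb G$ is two-dimensional, so the positivity argument on the resulting $2\times 2$ system must be carried out jointly rather than reducing to a single scalar inequality. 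Modulo these two points, the proof is a faithful two-boundary adaptation of Lemma~\ref{quaseFeller}.
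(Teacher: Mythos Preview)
Your overall strategy matches the paper's, but there is one genuine gap in the case $\lambda_+=\infty$. You take $A_\eps=(-\eps,0-]\cup[0+,\eps)$ as the shrinking neighbourhood in Dynkin's formula. In the topology of $\bb G$, the two half-lines are separate connected components, so this set does \emph{not} shrink to $\{0+\}$ --- it shrinks to $\{0+,0-\}$ --- and Dynkin's formula~\eqref{teo: Dykin} as stated requires $|A|\to 0$. Worse, with your choice the exit time $T_\eps^+$ can be infinite in expectation: after a jump to $0-$, the process is still inside $A_\eps$ and then behaves according to $\lambda_-$; if $\lambda_-=0$ the process is trapped and $E_{0+}[T_\eps^+]=\infty$. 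The paper instead takes $A=[0+,\eps)$, which \emph{is} an open neighbourhood of $0+$ in $\bb G$ that shrinks to $\{0+\}$. The exit then occurs at $\eps$, at $0-$, or at $\Delta$, and the mass the exit measure places directly on the single point $\{0-\}$ (not on the whole other half-line) is what survives as the atom $a^+$. Once you make this correction, your normalization $K_\eps^+$ with weight $1$ on $(-\infty,0-]$ is exactly the paper's, and the rest of your compactness argument goes through unchanged.

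Two minor points. In the case $0<\lambda_+<\infty$ your constants $c_1^+=\lambda_+p_\Delta^+$, $a^+=\lambda_+p_{0-}^+$, $c_3^+=1$ do not satisfy~\eqref{constants E +}; divide through by $1+\lambda_+$ as the paper does. For the converse, your diagonal-dominance formulation is equivalent to the paper's case analysis (which splits on $d^+=0$, $d^-=0$, and without loss of generality $d^-\le d^+$), but note that strict dominance can fail in the degenerate situation $a^+=1$, $c_i^+=0$, $\nu_1^\pm=0$; this is harmless because in the forward direction such constants cannot arise from an actual Feller process, but you should say so explicitly rather than asserting strict dominance in full generality.
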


\begin{proof}
	Recall from Proposition~\ref{prop:jump time} that $T_+ = \inf \{t > 0 : W_t \neq 0+\}$ and $T_- = \inf \{t > 0 : W_t \neq 0-\}$ are exponentially distributed with parameters $\lambda_+$ and $\lambda_-$, under the probabilities $P_{0+}, P_{0-}$, respectively. We examine the following cases, supposing first that $f \in \mf {D}(\msf{L})$. \smallskip

	\textbf{Case} $\lambda_+ = 0$ or $\lambda_- = 0$.
	
	If $\lambda_+ = 0$, then $0+$ is an absorbing point, so $f''(0+) = \msf{L}f(0+) = 0$, and we may choose $c^+_1 = a^+ = c^+_2 = 0$,  $c^+_3 = 1$, and $\nu_1^- = \nu_1^+ = 0$. The case $\lambda_- = 0$ is analogous.

	\textbf{Case} $0 < \lambda_+ < \infty$ or $0 < \lambda_- < \infty$:		
	
	Again we will consider only $0 < \lambda_+ < \infty$. By case 1 above, the process waits an exponential time at $0+$, then it jumps to $0-$ or $\Delta$. In particular, $0+$ is not a trap under $P_{0+}$, so we may apply Dynkin's formula~\eqref{teo: Dykin}. 
	
	Let $A = [0+, \eps)$ with $\eps \searrow 0$. Starting from $0+$, the process exits $A$ precisely when it jumps from $0+$ to the points $0-$ or $\Delta$. Thus,
	\begin{align*}
		\msf{L}f(0+) &= \lim_{\eps \searrow 0} \frac{E_{0+}[f(W_{T_+})] - f(0+)}{E_{0+}[T_+]} \\
		&= \lambda_+ [f(0-)P_{0+}(W_{T_+} = 0-) + f(\Delta)P_{0+}(W_{T_+} = \Delta) - f(0+)].
	\end{align*}
	
	Recalling that $f$ vanishes at $\Delta$, we rewrite the equation as follows
	\begin{align*}					
		\msf{L}f(0+)  &= \lambda_+P_{0+}(W_{T_+} = 0-)(f(0-) - f(0+)) - \lambda_+(1-P_{0+}(W_{T_+} = 0-))f(0+) \\
		&= \lambda_+P_{0+}(W_{T_+} = 0-)(f(0-) - f(0+)) - \lambda_+P_{0+}(W_{T_+} = \Delta)f(0+).
	\end{align*}
	
	Since $\msf{L}f(0+) = \frac{1}{2}f''(0+)$, we get
	\begin{equation}
		\label{holdpoint}
		\begin{split}
		\frac{1}{2}f''(0+) &+ \lambda_+P_{0+}(W_{T_+} = \Delta)f(0+)\\ &+ \lambda_+P_{0+}(W_{T_+} = 0-)\big(f(0+) - f(0-)\big) \;=\; 0\,.
		\end{split}
			\end{equation}
	
	Setting $c_1^+ = \frac{\lambda_+P_{0+}(W_{T_+} = \Delta)}{1 + \lambda_+}$, $a^+ = \frac{\lambda_+P_{0+}(W_{T_+} = 0-)}{1 + \lambda_+}$, $c^+_2 = 0$, $c^+_3 = \frac{1}{1 + \lambda_+}$ and $\nu_1^- = \nu_1^+ = 0$, the lemma follows.

	\textbf{Case} $\lambda_+ = \infty$ or $\lambda_- = \infty$.
		If $\lambda_+ = \infty$, then the process leaves $0+$ immediately, so we use Dynkin's formula \eqref{teo: Dykin} again with $A = [0+, \eps)$ with $\eps \searrow 0$. Starting from $0+$, the process can leave $A$ only via $\eps, \Delta, 0-$. Then considering $T_{\eps} = \tau_{\eps} \wedge \tau_{\Delta} \wedge \tau_{0-}$, by Dynkin's Formula, we have
	\begin{equation*}
		\msf{L}f(0+) \;=\; \lim_{\eps \searrow 0} \frac{E_{0+}[f(W_{T_{\eps}})] - f(0+)}{E_{0+}[T_{\eps}]}\,.
	\end{equation*}
	
	We rewrite this equation in terms of the pull-back measure $\nu_{\eps} (dx) = $\break $ \frac{1}{E_{0+}[T_{\eps}]}P_{0+}(W_{T_\eps} \in dx)$ on $\bb G_\Delta$. Thus,
	\begin{equation}
		\label{eqgenE}
		\msf{L}f(0+) \;=\; \lim_{\eps \searrow 0} \, \int_{\bb G} (f(x) - f(0+)) \, \nu_{\eps}(dx) -f(0+)\nu_{\eps}(\Delta)\,.
	\end{equation}
	We consider $K_{\eps} = 1 + \nu_{\eps}(\{\Delta\} \cup (-\infty, 0-]) + \int_{(0+, \infty)} (1 \wedge x) \, \nu_{\eps}(dx)$ and the measures
	\begin{align*}
		\mu^+_{\eps}(A) &= \int_A (1 \wedge x) \, \frac{\nu_{\eps}(dx)}{K_{\eps}} \text{ on } (0+, \infty)\quad \text{ and } \\
		\mu^-_{\eps} (A) &= \frac{\nu_{\eps}(A)}{K_{\eps}} \text{ on } (-\infty, 0-].
	\end{align*}
	
	Identifying $[0+, \infty)$ with $[0, \infty)$ and noticing that $\mu^+_{\eps}((0+, \infty)) \leq 1$, we use Proposition \ref{weakconvergencemeasures} to obtain a sequence $(\eps_n)_n$ and a measure $\mu^+(dx)$ on $[0+, \infty]$ such that
	\begin{equation*}
		\lim_{n \to \infty} \int_{(0+, \infty)} f(x) \, \mu^+_{\eps_n}(dx) \;=\; \int_{[0+, \infty]} f(x) \, \mu^+(dx)\,,
	\end{equation*}
	for all $f \in C[0+, \infty]$. 	Applying the same reasoning and Remark \ref{weakconv2}, we also obtain a measure $\mu^-(dx)$ on $[-\infty, 0-]$ such that, after possibly going over to a subsequence, we have
	\begin{equation*}
		\lim_{n \to \infty} \int_{(-\infty, 0-]} f(x) \, \mu^-_{\eps_n}(dx) \;=\; \int_{[-\infty, 0-]} f(x) \, \mu^-(dx)\,,
	\end{equation*}
	for all $f \in C[-\infty, 0-]$.
	
	Now, we use these limits with appropriate functions. For $f \in \mf {D}(\msf{L})$, the function $\frac{f(x) - f(0+)}{1 \wedge x}$ is continuous on $(0+, \infty)$ and $\lim_{x \to \infty} \frac{f(x) - f(0+)}{1 \wedge x} = -f(0+) < \infty$. Also, $\lim_{x \downarrow 0+} \frac{f(x) - f(0+)}{1 \wedge x} = \lim_{x \downarrow 0+} \frac{f(x) - f(0+)}{x} = f'(0+) < \infty$. Thus, $\frac{f(x) - f(0+)}{1 \wedge x}$ admits an extension in $C[0+, \infty]$, so
	\begin{align*}
		\int_{[0+, \infty]} \frac{f(x) - f(0+)}{1 \wedge x} \, \mu^+(dx) &= \lim_{n \to \infty} \int_{(0+, \infty)} \frac{f(x) - f(0+)}{1 \wedge x} \, \mu^+_{\eps_n}(dx) \\ 
		&= \lim_{n \to \infty} \int_{(0+, \infty)} \frac{f(x) - f(0+)}{1 \wedge x} (1 \wedge x) \, \frac{1}{K_{\eps_n}}\nu_{\eps_n}(dx) \\
		&= \lim_{n \to \infty} \int_{(0+, \infty)} (f(x) - f(0+)) \, \frac{1}{K_{\eps_n}}\nu_{\eps_n}(dx) \\
		&= \lim_{n \to \infty} \int_{[0+, \infty)} (f(x) - f(0+)) \, \frac{1}{K_{\eps_n}}\nu_{\eps_n}(dx)\,,
	\end{align*}		
	where we have used that $1 \wedge x$ is the Radon-Nikodym derivative of $\mu^+_{\eps_n}(dx)$ with respect to $\frac{1}{K_{\eps_n}}\nu_{\eps_n}(dx)$.
	Similarly, one can see that
	\begin{equation*}
		\int_{[-\infty, 0-]} (f(x) - f(0+)) \, \mu^-(dx) =  \lim_{n \to \infty} \int_{(-\infty, 0-]} (f(x) - f(0+)) \, \frac{1}{K_{\eps_n}}\nu_{\eps_n}(dx)
	\end{equation*}
	for all $f \in \mf {D}(\msf{L})$. 	Moreover, the sequences $\frac{\nu_{\eps_n}(\Delta)}{K_{\eps_n}}$ and $\frac{1}{K_{\eps_n}}$ are bounded, so we may assume that there are $p_1, p_2\in \bb R$ such that
	\begin{equation*}
		\lim_{n \to \infty} \frac{\nu_{\eps_n}(\Delta)}{K_{\eps_n}} = p_1 \quad \mbox{ and }\quad  \lim_{n \to \infty} \frac{1}{K_{\eps_n}} = p_2,
	\end{equation*}
	passing to a further subsequence, if necessary. Applying all these limits,  Equation~\eqref{eqgenE} simplifies to
	\begin{align*}
		& p_1f(0+) + p_2\msf{L}f(0+) \\ &=
		\int_{[0+, \infty]} \frac{f(x) - f(0+)}{1 \wedge x} \, \mu^+(dx) + \int_{[-\infty, 0-]} (f(x) - f(0+)) \, \mu^-(dx)\,.
	\end{align*}
	Hence,
	\begin{align*}
		& (p_1 + \mu^+(\infty) + \mu^-(-\infty))f(0+)\\
		& + \mu^-(0-)(f(0+) - f(0-)) -\mu^+(0+)f'(0+) + \frac{p_2}{2}f''(0+) \\ 
		&=	\int_{(0+, \infty)} \frac{f(x) - f(0+)}{1 \wedge x} \, \mu^+(dx) + \int_{(-\infty, 0-)} (f(x) - f(0+)) \, \mu^-(dx).
	\end{align*}
	Defining the constants
	\begin{align*}
		c^+_1 &= p_1 + \mu^+(\infty) + \mu^-(-\infty), \\
		a^+ &= \mu^-(0-), \\
		c^+_2 &= \mu^+(0+), \\
		c^+_3 &= p_2,
	\end{align*}
	and considering the measures
	\begin{align*}
		\nu_1^+(A) &= \int_A \frac{1}{1 \wedge x} \, \mu^+(dx) \text{ on } (0+, \infty), \\
		\nu_1^-(A) &= \mu^-(A) \text{ on } (-\infty, 0-)
	\end{align*}
	yields the desired Equation \eqref{firstdomainE}. It only remains to check \eqref{constants E +}. This follows from
	\begin{align*}
		& p_1 + p_2 + \mu^-([-\infty, 0-]) + \mu^+([0+, \infty]) \\ &= \lim_{n \to \infty} \frac{\nu_{\eps_n}(\Delta)}{K_{\eps_n}} + \frac{1}{K_{\eps_n}} + \mu^-_{\eps_n}((-\infty, 0-]) + \mu^+_{\eps_n}((0+, \infty)) \\
		&= \lim_{n \to \infty} \frac{\nu_{\eps_n}(\Delta) + 1 + \nu_{\eps_n}((-\infty, 0-]) + \int_{(0+, \infty)} (1 \wedge x) \, \nu_{\eps_n}(dx)}{K_{\eps_n}} \;=\; 1\,.	
	\end{align*}
	
	For the case $\lambda_- = \infty$, we proceed similarly. We just emphasizes that, following this procedure, we will obtain an equation taking the form
	\begin{align*}
		& p_1f(0-) + p_2\msf{L}f(0-) \\ &=
		\int_{[-\infty, 0-]} \frac{f(x) - f(0-)}{1 \wedge -x} \, \mu^-(dx) + \int_{[0+, \infty]} (f(x) - f(0-)) \, \mu^+(dx)\,.
	\end{align*}
	Now, since $\lim_{x \uparrow 0-} \frac{f(x) - f(0-)}{1 \wedge -x} = \lim_{x \uparrow 0-} \frac{f(x) - f(0-)}{-x} = - f'(0-)$, this leads to the  change of sign in the coefficient of $c_2^-$, as compared to $c_2^+$ in the statement of the lemma.\bigskip

	For the converse, let $\mc{D}'$ be the set of functions $f, f'' \in C(\bb G_\Delta)$ satisfying Equations \eqref{firstdomainE} and \eqref{firstdomainE2}. We want to show that $\mc{D}' \subseteq \mf {D}(\msf{L})$. In fact, as in the proof of Lemma \ref{quaseFeller}, we have that the equation
	\begin{equation}
		\label{eqdomE}
		\beta f - \frac{1}{2}f'' = g\,,
	\end{equation}
	for $\beta > 0$ and $g \in C(\bb G_\Delta)$, has a solution in $\mf {D}(\msf{L})$.	
	Let $f \in \mc{D}'$. Since $f, f'' \in C(\bb G_\Delta)$, and $\mf {D}(\msf{L}) \subseteq \mc{D}'$, if $f$ does not belong to $\mf {D}(\msf{L})$, then there exist two different functions $f_1, f_2 \in \mc{D}'$ satisfying the same Equation~\eqref{eqdomE}. Consequently,
	\begin{equation}
		\label{eqdom1E}
		\beta (f_1 - f_2) - \frac{1}{2}(f_1'' - f_2'') = 0 \quad \text{ on } \bb G.
	\end{equation}
	
	Now, considering the known solution of \eqref{eqdom1E} and the fact that $f_1 - f_2$ decays to $0$ at infinity, we obtain
	\begin{equation*}
		f_1(x) - f_2(x)	= \begin{cases}
			d^+ \exp(-\sqrt{2\beta}x), \; &\text{ if } x \in [0+, \infty), \\
			d^- \exp(\sqrt{2\beta}x), \; & \text{ if } x \in (-\infty, 0-],
		\end{cases}
	\end{equation*}
	for some $d^+, d^- \in \R$. Here, we used the natural definition $\exp(0+) = \exp(0-) = 1$. Since $f_1$ and $f_2$ are distinct,  $d^+, d^-$ cannot be both equal to $0$.
	
	Since $f_1, f_2 \in \mc{D}'$, subtracting Equation \eqref{firstdomainE} derived from $f_1$ and $f_2$, and computing $f_1 - f_2$ and its derivatives at $0+$ or $0-$ results in
		\begin{align}
		\label{positive}
		& \nonumber c_1^+d^+ + a^+d^+ - a^+d^- + c_2^+d^+\sqrt{2\beta} +c_3^+d^+\beta \\
		&= \int_{(0+, \infty)} \!\!\!d^+\Big\{\exp(-\sqrt{2\beta}x) - 1\Big\} \, \nu_1^+(dx) + \int_{(-\infty, 0-)} \!\!\!\!\! \Big\{d^-\exp(\sqrt{2\beta}x) - d^+\Big\} \, \nu_1^-(dx).
	\end{align}
	
	On the other hand, doing analogous computations using Equation \eqref{firstdomainE2}, we obtain
	\begin{align}
		\label{negative}
		& \nonumber c_1^-d^- + a^-d^- - a^-d^+ + c_2^-d^-\sqrt{2\beta} +c_3^-d^-\beta \\
		&= \int_{(0+, \infty)} \!\!\!\Big\{d^+\exp(-\sqrt{2\beta}x) - d^-\Big\} \, \nu_2^+(dx) + \int_{(-\infty, 0-)} \!\!\!\!\! d^- \Big\{\exp(\sqrt{2\beta}x) - 1\Big\} \, \nu_2^-(dx).
	\end{align}
	
	We will show that these equations cannot be both true. Indeed, suppose that $d^+ = 0$, then $d^- \neq 0$, so \eqref{negative} simplifies to
	\begin{align*}
		& c_1^- + a^- + c_2^-\sqrt{2\beta} +c_3^-\beta \\
		&= \int_{(0+, \infty)} (- 1) \, \nu_2^+(dx) + \int_{(-\infty, 0-)} (\exp(\sqrt{2\beta}x) - 1) \, \nu_2^-(dx),
	\end{align*}
	since both integrands are negative, whereas the constants $c_1^-, a^-, c^-_2, c_3^-$ are all non-negative, the only possibility for the equation to hold is that all these constants and measures $\nu_2^+(dx), \nu_2^-(dx)$ are simultaneously zero. However, \eqref{constants E -} holds, leading to a contradiction.
	
	For $d^- = 0$ the argument is similar, so we can assume $d^+, d^- \neq 0$. Without loss of generality suppose $d^- \leq d^+$, then \eqref{positive} implies
	\begin{align*}
		& c_1^+ + a^+(1 - \frac{d^-}{d^+}) + c_2^+\sqrt{2\beta} +c_3^+\beta \\
		&= \int_{(0+, \infty)} \Big\{\exp(-\sqrt{2\beta}x) - 1\Big\}\, \nu_1^+(dx) + \int_{(-\infty, 0-)} \Big\{\frac{d^-}{d^+} \exp(\sqrt{2\beta}x) - 1\Big\} \, \nu_1^-(dx),
	\end{align*}
	and we have the same problem that the left-hand side of the equation is non-negative, whereas the right side is negative, giving the same contradiction as before. 	
	Therefore, $f \in \mf {D}(\msf{L})$ and the proof is complete.
\end{proof}

We now come to the proof of Theorem~\ref{teo:5.5}. Its proof is similar to the one of Theorem~\ref{thm:general_R}. 

\begin{proof}[Proof of Theorem~\ref{teo:5.5}]
	Let $W$ be a general Brownian motion on $\bb G_\Delta$ with boundary conditions at the origin. We want to show that the measures $\nu_j^+$, $\nu_j^-$, for $j = 1, 2$, in Lemma \ref{quaseFellerE} are identically zero, which was already shown to be true for $\lambda_+, \lambda_- \neq \infty$. To illustrate the argument, let $j = 1$. We assume that 
	$\nu_1^+(\eps, \infty), \nu_1^-(-\infty, -\eps) > 0$, for some $\eps > 0$. Take any $f_1 \in \mf {D}(\msf{L})$ and define a new function $f_2$ such that $f_2 = f_1$ on $[- \eps, 0-] \cup [0+, \eps]$, but $f_2(x) < f_1(x)$ for $|x| > \eps$ in such a way that $f_2$ also belongs to $C(\bb G_\Delta)$ and $f'' \in C(\bb G_\Delta)$. Thus, by Lemma \ref{neighb0E}, we have that $f_2 \in \mf {D}(\msf{L})$ and Lemma \ref{quaseFellerE} applies to both $f_1, f_2$. Since they agree in a neighborhood of $0+$ and $0-$, Equation \eqref{firstdomainE} reduces to		
	\begin{align*}		
		&  \int_{(\eps,\infty)} (f_1(x) - f_2(x))   \, \nu_1^+(dx) = - \int_{(-\infty, -\eps)} (f_1(x) - f_2(x)) \, \nu_1^-(dx).
	\end{align*}
	
	Since both integrands are positive, this leads to
	\begin{align*}
		& \int_{(\eps, \infty)} (f_1(x) - f_2(x)) \, \nu_1^+(dx) = \int_{(-\infty, -\eps)} (f_1(x) - f_2(x)) \, \nu_1^-(dx) = 0 
	\end{align*}
	implying that
	\begin{align*}
		& (f_1- f_2)\mathbf{1}_{\{(\eps, \infty)\}} = 0 \ \nu_1^+\mbox{-a.e.} \text{ and } (f_1- f_2)\mathbf{1}_{\{(-\infty, -\eps)\}} = 0 \ \nu_1^- \mbox{-a.e.},
	\end{align*}
	which is not possible since both the measures and the functions are positive on each of their respective intervals. Hence, $\nu_1^+(\eps, \infty) = \nu_1^-(-\infty, -\eps) = 0$. Since $\eps > 0$ was arbitrary, it follows that $\nu_1^+ \equiv 0$, $\nu_1^- \equiv 0$, as desired.

Conversely, let $\msf{L}$ and $\mf{D}(\msf{L})$ be as in the statement of the theorem. We apply Theorem~\ref{thm:Hille-Yosida} to show that $\msf{L}$ is the generator of a strongly continuous semigroup on $C(\bb G_\Delta)$, which is therefore a Feller process. An application of Proposition~\ref{Equal_BM} will then show that this process corresponds to a general Brownian motion on $\bb G_\Delta$. First, we verify that $\mc{R}(\lambda - \msf{L}) = C(\bb G_{\Delta})$. Let $\lambda > 0$ and $g \in C(\bb G_{\Delta})$. Identifying $0+$ and $0-$ with $0$, consider
\begin{equation*}
	f^+_p(x) = \int_{0}^{\infty} \frac{1}{\sqrt{2\lambda}} \exp(-\sqrt{2\lambda}|x - y|) g(y) \, dy,
\end{equation*}
for $x \in [0+, \infty)$, which is a solution of $\lambda f - \frac{1}{2}f'' = g$ on $[0, \infty)$, and
\begin{equation*}
	f^-_p(x) = \int_{-\infty}^{0} \frac{1}{\sqrt{2\lambda}} \exp(-\sqrt{2\lambda}|x - y|) g(y) \, dy,
\end{equation*}
for $x \in (-\infty, 0-]$, which is also a solution of $\lambda f - \frac{1}{2}f'' = g$ on $(-\infty, 0]$.

Define
\begin{equation*}
	f(x) = \begin{cases}
		f^+_p(x) + A\exp(-\sqrt{2\lambda}x)\,, & \text{ if } x \in [0+, \infty); \\
		f^-_p(x) + B\exp(\sqrt{2\lambda}x) \,, & \text{ if } x \in (-\infty, 0-].
	\end{cases}
\end{equation*}

We see that $f, f'' \in C(\bb G_{\Delta})$ and that $f$ satisfies $(\lambda- \msf{L})f= g$. For $f$ to satisfy the domain's equations
\begin{align}
	\label{domain's equation G}
	c_1^+f(0+) + a^+(f(0+) - f(0-)) - c_2^+f'(0+) + \frac{c_3^+}{2}f''(0+)	&= 0 \quad \text{ and } \\ \nonumber
	c_1^-f(0-) + a^-(f(0-) - f(0+)) + c_2^-f'(0-) + \frac{c_3^-}{2}f''(0-) &= 0,
\end{align}
we choose $A, B$ so that 
\begin{equation*}
	\begin{cases}
		\alpha A + \beta - a^+B = 0 \\
		\gamma B + \delta - a^-A = 0,
	\end{cases}
\end{equation*}
where
\begin{align*}
	\alpha &= c^+_1 + a^+ + \sqrt{2\lambda}c^+_2 + \lambda c^+_3; \\
	\beta &= c_1^+ f^+_p(0) + a^+ (f^+_p(0) - f^-_p(0)) - c_2^+ (f^+_p)'(0) + \frac{c_3^+}{2} (f^+_p)''(0); \\
	\gamma &= c^-_1 + a^- + \sqrt{2\lambda}c^-_2 + \lambda c^-_3; \\
	\delta &= c_1^- f^-_p(0) + a^- (f^-_p(0) - f^+_p(0)) - c_2^- (f^-_p)'(0) + \frac{c_3^-}{2} (f^-_p)''(0).
\end{align*}

Now, to prove that $\msf{L}$ is dissipative, we take $f \in \mf {D}(\msf{L})$, $f \not\equiv 0$, and $x_1 \in \bb G$ such that $|f(x)| \leq |f(x_1)|$, for all $x \in \bb G$. If $x_1 \neq 0+, 0-$, since $f$ is twice differentiable on $\bb G$, the argument follows as in the case of a general Brownian motion on $\R_\Delta$.

Otherwise, suppose that $x_1 = 0+$ and $f(0+) < 0$, then $0+$ is actually a point of minimum of $f$, so even though the derivative $f'(0+)$ is only a right derivative, we still have
$f'(0+) = \lim_{x \searrow 0+} f'(x) \geq 0$, then the first domain's equation in \eqref{domain's equation G} implies that $f''(0+) \geq 0$, and we get
\begin{equation*}
	\| \lambda f - \msf{L}f \| \geq \frac{1}{2}f''(0+) - \lambda f(0+) \geq -\lambda f(0+) \geq \lambda|f(x)|, \forall \, x \in \bb G,
\end{equation*} 
thus $\| \lambda f - \msf{L}f \| \geq \lambda \| f \|$. If $f(0+) > 0$, then $0+$ is a point of maximum of $f$, and we do the same considerations, noting now that $f'(0+) \leq 0$. The argument for $x_1 = 0-$ is similar.

To conclude we will show that $\mf {D}(\msf{L})$ is dense in $C(\bb G_{\Delta})$.  We proceed in a similar way  as we did for $C(\R_{\Delta})$, but taking some precautions since convolutions are well-behaved in $\bb R$ and we are working with two half-lines. For $f \in C(\bb G_{\Delta})$, we define:
\begin{equation*}
	f_{\eps}^+(x) = f(x - \eps)\mathbf{1}_{\{x > \eps\}}  + f(0+)\mathbf{1}_{\{0+ \leq x \leq \eps\}}\,, \quad \text{ for }x\geq 0
\end{equation*}
and $f_{\eps}^+(x) = f_{\eps}^+(-x)$ for $x\leq 0$. In words, the function $f_{\eps}^+(x)$ is obtaining by doing a small shift of $f$ to the right  keeping if constant around zero, and extending it to the full line by reflection. Analogously, we define
\begin{equation*}
	f_{\eps}^-(x) \;=\; f(x + \eps)\mathbf{1}_{\{x < -\eps\}}  + f(0-)\mathbf{1}_{\{-\eps \leq x \leq 0-\}}\quad \text{ for }x\leq 0
\end{equation*}
and $f_{\eps}^-(x) = f_{\eps}^-(-x)$ for $x\geq 0$.
Taking  a standard smooth mollifier $\phi_{\eps}$, define
\begin{equation*}
	f_{\eps} \ast  \phi_{\eps/2} \;=\; \begin{cases}
		f_{\eps}^+ \ast  \phi_{\eps/2}(x), & \text{ if } x \in [0+,\infty),\\
		f_{\eps}^- \ast  \phi_{\eps/2}(x), & \text{ if } x \in (-\infty,0^-].\\
	\end{cases}
\end{equation*}
The function  $f_{\eps} \ast  \phi_{\eps/2}$ is smooth and decays to zero at infinity. Also, $f_{\eps} \ast  \phi_{\eps/2} \to f$ uniformly as $\eps \rightarrow 0$ and $f_{\eps} \ast  \phi_{\eps/2}$ is equal to $f(0+)$ (respectively, equal to $f(0-)$) in a neighborhood of $0+$ (respectively, in a neighborhood of $0-$). In particular, the derivatives of $f_{\eps} \ast  \phi_{\eps/2}$ at $0+$ and $0-$ are null.
Now, we choose $P$ a polynomial such that 
\begin{equation*}
	P(x) \;=\; \begin{cases}
		ax + bx^2\,, \text{if } x \in [0+, \infty); \\
		\tilde{a}x + \tilde{b}x^2, \text{if } x \in (-\infty, 0-],
	\end{cases}
\end{equation*}
for some $\tilde{a}, a, \tilde{b}, b \in \R$ to be properly selected. We also take a bump function $\varphi_{\eps} \in C^{\infty}(\bb G)$ so that $\varphi_{\eps} \equiv 1$ in $(-\eps/2, 0-] \cup [0+, \eps/2)$ and $\varphi_{\eps}$ vanishes on $\R \backslash (-\eps, 0-] \cup [0+, \eps)$. Thus, $P\varphi_{\eps} \in C(\bb G_{\Delta})$ as well as its derivatives and
$P\varphi_{\eps} \longrightarrow 0$ uniformly as $\eps \rightarrow 0$, since $P(0\pm) = 0$.
A small computation yields that
\begin{align*}
	&c_1^+g_\eps(0+) + a^+(g_\eps(0+) - g_\eps(0-)) - c_2^+g_\eps'(0+) + \frac{c_3^+}{2}g_\eps''(0+)	
	\\
	&=c_1^+f(0+) + a^+(f(0+) - f(0-)) - c_2^+a + c_3^+b
\end{align*}
and
\begin{align*}
	&c_1^-g_\eps(0-) + a^-(g_\eps(0-) - g_\eps(0+)) + c_2^-g_\eps'(0-) + \frac{c_3^-}{2}g_\eps''(0-)\\
	&=c_1^-f(0-) + a^-(f(0-) - f(0+)) + c_2^-\tilde{a} + c_3^-\tilde{b}\,.
\end{align*}
We can always choose $\tilde{a}, a, \tilde{b}, b$ so that the right hand side of the above equations are equal to zero, since the cases $c^+_2 = c^+_3 = 0$ and $c_2^- = c_3^- = 0$ are excluded. For that choice, $g_{\eps} \in \mf {D}(\msf{L})$ and $g_{\eps} \longrightarrow f$ uniformly as $\eps \rightarrow 0$, completing the proof. Hence, we can apply the Hille-Yosida Theorem~\ref{thm:Hille-Yosida}. Proposition~\ref{Equal_BM} implies that the process constructed in that way is a general Brownian motion on $\bb G_\Delta$.
\end{proof}

\section{Proofs of functional CLTs}

\begin{proof}[Proof of Theorem~\ref{TCL_R}]
	Let $\msf{L}$ be the generator of the Skew Sticky Killed Brownian Motion, as in the statement of Theorem~\ref{thm:general_R}.
	By 	\cite[Theorem~6.1, page~28 and Theorem~2.11, page~172]{EK}, it is enough to check that there exists a core $\mc C$ for $\msf{L}$ such that, for all $f\in \mc C$, there exists a sequence $(f_n)_{n\in \bb N}$ in $\mf D(\msf{L_n})$ such that 
	\begin{equation}\label{conditions_EK}
		\begin{cases}
			\Vert f_n - \pi_n f\Vert_\infty \to 0\quad \text{ and }\\
			\Vert \msf{L}_n f_n  - \pi_n\msf{L}f\Vert_\infty \to 0\,,
		\end{cases}
	\end{equation}
	where $\pi_n $ is the restriction to $\bb G_{n,\Delta}$, i.e., $\pi_n f = f\big|_{\bb G_{n,\Delta}}$.
	In our scenario, finding the core and these approximating sequences $(f_n)_{n\in \bb N}$ is particularly simple: we can take the core $\mc C$  as the full domain $\mf {D}(\msf{L})$ and the approximating sequence as the restriction of $f$ itself, that is, $f_n = \pi_n f$. This last fact immediately implies that $\Vert f_n - \pi_n f\Vert_\infty = 0$, so we only need to check the second condition in \eqref{conditions_EK}. 
	
	Since  any function $f\in \mf {D}(\msf{L})$ is $C^2$ away from zero, and such that $f$ and $f''$ decay to zero at infinity,  the discrete Laplacian of $f$ approximates the continuous Laplacian of $f$. Since $n^2 \msf{L}_n$ outside $\{\frac{0}{n}, \Delta\}$ is the discrete Laplacian, this observation guarantees that
	\[\sup_{x\in \bb G_{n,\Delta} \backslash \{\frac{0}{n},  \Delta\}} \vert \msf{L}_n f_n (x)  - \pi_n\msf{L}f(x) \vert  \;\longrightarrow\; 0\,,\quad \text{ as } n\to\infty.\]
	Moreover, $f(\Delta) = \msf{L}f(\Delta) = 0$. Therefore, it only remains to understand what  happens at $\pfrac{0}{n}$.  Since $f(\Delta)=0$, by \eqref{RWgm_R} we have that
	\begin{align*}
		&n^2\msf{L}_n f(\pfrac{0}{n})  \;=\; 	-A f(\pfrac{0}{n})+ B_+ n\Big[f\big(\pfrac{1}{n}\big)-f\big(\pfrac{0}{n}\big)\Big] +B_- n\Big[f\big(\pfrac{-1}{n}\big)-f\big(\pfrac{0}{n}\big)\Big]
		 \\
		&\stackrel{n\to\infty}{\longrightarrow}  	-Af(0)+ B_+ f'(0+) 	- B_- f'(0-) \;=\; \frac{1}{2} f''(0)
	\end{align*}
	due to \eqref{eq: gerador_R} and the definition of $\mf {D}(\msf{L})$ stated in Theorem~\ref{thm:general_R}. This concludes the proof.
\end{proof}

\begin{proof}[Proof of Theorem~\ref{TCL}] The proof is similar to the proof of Theorem~\ref{TCL_R}.
	Let $\msf{L}$ be the generator of the Skew Sticky Killed Snapping Out Brownian Motion, as in the statement of Theorem~\ref{teo:5.5}. It is enough to check that there exists a core $\mc C$ for $\msf{L}$ such that, for all $f\in \mc C$, there exists a sequence $(f_n)_{n\in \bb N}$ in $\mf D(\msf{L_n})$ such that 
	\begin{equation*}
		\begin{cases}
			\Vert f_n - \pi_n f\Vert_\infty \to 0\quad \text{ and }\\
			\Vert \msf{L}_n f_n  - \pi_n\msf{L}f\Vert_\infty \to 0\,,
		\end{cases}
	\end{equation*}
	where $\pi_n $ is the restriction to $\bb G_{n,\Delta}$ that is, $\pi_n f = f\big|_{\bb G_{n,\Delta}}$. Take the core $\mc C$  as the full domain $\mf {D}(\msf{L})$ and the approximating sequence as the restriction of $f$ itself. Again,  since $n^2 \msf{L}_n$ outside $\{\frac{0+}{n}, \frac{0-}{n}, \Delta\}$ is the discrete Laplacian, 
	\[\sup_{x\in \bb G_{n,\Delta} \backslash \{\frac{0+}{n}, \frac{0-}{n}, \Delta\}} \vert \msf{L}_n f_n (x)  - \pi_n\msf{L}f(x) \vert  \;\longrightarrow\; 0\,,\quad \text{ as } n\to\infty\]
	and $f(\Delta) = \msf{L}f(\Delta) = 0$, so we only need to analyse what  happens at $\pfrac{0+}{n}$ and $\pfrac{0-}{n}$. We will study only $\pfrac{0+}{n}$, the case $\pfrac{0-}{n}$ is totally analogous. Since $f(\Delta)=0$, by \eqref{RWgm} we have that
	\begin{align*}
		&n^2\msf{L}_n f(\pfrac{0+}{n})  \;=\; 	-A_+f(\pfrac{0+}{n})+ B_+ n\Big[f\big(\pfrac{1}{n}\big)-f\big(\pfrac{0+}{n}\big)\Big]
		+ C_+\Big[f\big(\pfrac{0-}{n}\big)-f\big(\pfrac{0+}{n}\big)\Big]\\
		&\stackrel{n\to\infty}{\longrightarrow}  	-A_+f(0+)+ B_+ f'(0+) 	- C_+\big[f(0+) - f(0-)\big] \;=\; \frac{1}{2} f''(0+)
	\end{align*}
	due to \eqref{eq: rel_1} and the definition of $\mf {D}(\msf{L})$ stated in Theorem~\ref{teo:5.5}. This concludes the proof.
\end{proof}

\appendix

\section{Some tools on semigroups and generators}\label{app1}

\begin{teorema}[Hille-Yosida Theorem, see Theorem 1.2.6 of \cite{EK} for instance]
	\label{thm:Hille-Yosida}
	A linear operator $L$  is the generator of a strongly continuous contraction semigroup on a Banach space ${B}$ with norm $\Vert \cdot \Vert$ if and only if
	\begin{itemize}
		\item $\mc{D}(L)$ is dense in ${B}$.
		\item $L$ is dissipative, that is $\| \lambda f - L f \| \geq \lambda \| f \|$, for every $\lambda > 0$ and $f \in \mc{D}(L)$.
		\item $\mc{R}(\lambda - {L})$ the range of $\lambda - {L}$ is equal to ${B}$, for some $\lambda > 0$.
	\end{itemize} 
\end{teorema}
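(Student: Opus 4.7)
The plan is to prove the classical Hille--Yosida theorem. The necessity direction is routine: if $L$ generates a strongly continuous contraction semigroup $(T_t)_{t\ge 0}$, then for $f\in\mc D(L)$ the identity $T_tf-f=\int_0^t T_sLf\,ds$ yields density of $\mc D(L)$ (since $\tfrac{1}{t}\int_0^t T_sf\,ds\to f$ belongs to $\mc D(L)$), and the resolvent formula $R_\lambda g:=\int_0^\infty e^{-\lambda s}T_sg\,ds$ defines an inverse of $\lambda-L$ on all of $B$ with $\|R_\lambda\|\le 1/\lambda$, which gives both dissipativity and the range condition for every $\lambda>0$.

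For sufficiency, I would first extend the range condition. Dissipativity implies $\lambda-L$ is injective on $\mc D(L)$ with $\|(\lambda-L)^{-1}\|\le 1/\lambda$ on its range, and a standard open/closed argument shows that the set $\{\lambda>0:\mc R(\lambda-L)=B\}$ is both open and closed in $(0,\infty)$. Hence $R_\lambda:=(\lambda-L)^{-1}$ exists as a bounded operator on $B$ for every $\lambda>0$, with $\|\lambda R_\lambda\|\le 1$. Introduce the \emph{Yosida approximants}
\begin{equation*}
L_\lambda \;:=\; \lambda L R_\lambda \;=\; \lambda^2 R_\lambda-\lambda I,
\end{equation*}
which are bounded, hence generate uniformly continuous semigroups $T_t^\lambda:=e^{tL_\lambda}$. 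Writing $T_t^\lambda=e^{-\lambda t}e^{\lambda^2 tR_\lambda}$ and using $\|\lambda R_\lambda\|\le 1$ shows $\|T_t^\lambda\|\le 1$, so each $T^\lambda$ is a contraction semigroup.

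The crux is to pass to the limit $\lambda\to\infty$. One first proves that $\lambda R_\lambda f\to f$ for every $f\in B$: this is immediate for $f\in\mc D(L)$ from $\lambda R_\lambda f-f=R_\lambda Lf$ and $\|R_\lambda\|\le 1/\lambda$, and extends to $B$ by density and the uniform bound. Consequently $L_\lambda f=\lambda R_\lambda(Lf)\to Lf$ for $f\in\mc D(L)$. Since $L_\lambda$ and $L_\mu$ commute (both are functions of $R_\lambda,R_\mu$, which commute via the resolvent identity), one obtains
\begin{equation*}
\|T_t^\lambda f-T_t^\mu f\|\;\le\; t\,\|L_\lambda f-L_\mu f\|\,,\qquad f\in\mc D(L),
\end{equation*}
so $T_t^\lambda f$ is Cauchy uniformly on compact time intervals. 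The limit $T_tf:=\lim_\lambda T_t^\lambda f$ then extends by density and the contraction bound to a strongly continuous contraction semigroup on $B$.

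Finally, one identifies the generator $\tilde L$ of $(T_t)$ with $L$. For $f\in\mc D(L)$, differentiating $T_t^\lambda f=f+\int_0^t T_s^\lambda L_\lambda f\,ds$ and passing to the limit gives $T_tf-f=\int_0^t T_sLf\,ds$, hence $f\in\mc D(\tilde L)$ and $\tilde Lf=Lf$, so $L\subseteq\tilde L$. For the reverse inclusion, take $f\in\mc D(\tilde L)$ and set $g=(1-\tilde L)f$; by the extended range condition there exists $\tilde f\in\mc D(L)$ with $(1-L)\tilde f=g$, and then $(1-\tilde L)(f-\tilde f)=0$ forces $f=\tilde f\in\mc D(L)$ by dissipativity of $\tilde L$. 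The main obstacle I expect is the technical bookkeeping in the Yosida approximation step, in particular the commutativity argument for $T_t^\lambda$ and $T_t^\mu$ and the uniform convergence on compact time intervals; everything else is either routine functional-analytic manipulation or follows by density.
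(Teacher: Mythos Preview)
Your proof is the standard Yosida-approximation argument and is correct. However, the paper does not actually prove this theorem: it is stated in the appendix purely as a tool, with a citation to Ethier--Kurtz \cite{EK}, Theorem~1.2.6, and no proof is given. So there is nothing to compare against; you have supplied a complete proof where the paper simply quotes the result from the literature.
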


 We believe that next result is known, but since we could not find a proof in the literature, we provide one below. In plain words, it says that if a process has the same generator as a Brownian motion in an interval $[a,b]$, then it behaves as a Brownian motion in the interval $[a,b]$.

\begin{proposicao}\label{Equal_BM}
	Let $\{X_t^x:t\geq 0\}$ be a Feller process on an interval $I\subset \bb R$ with generator $\msf{L}$ and domain $\mf D(\msf{L})$, where $x\in I$ denotes its starting point. Fix an interval $[a,b]\subset I$. Assume that 
	\begin{enumerate}[(a)]
		\item For any $\eps>0$, the restriction of functions $f\in  \mf D(\msf{L})$ to the interval $[a+\eps,b-\eps]$ contains the set of functions $g\in C^2[a+\eps,b-\eps]$ such that $g''(a+\eps)=g''(b-\eps)=0$.
		\item For any $f\in \mf D(\msf{L})$ and any $x\in (a,b)$, it holds that $\msf{L} f(x) = \frac{1}{2}f''(x)$. 
	\end{enumerate}
	
	Then, for any $x\in [a,b]$, it holds that  \[
	\{X_{t\wedge \tau_{a,b}}^x:t\geq 0\} \overset{d}{=} \{B_{t\wedge \tau_{a,b}}^x:t\geq 0\}\,,\]
 where $B_t^x$ is a standard Brownian motion starting from $x\in [a,b]$ and $\tau_{a,b}$ is the hitting time of $\{a,b\}$. 
\end{proposicao}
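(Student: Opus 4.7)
My plan is to reduce the statement to uniqueness of the martingale problem for Brownian motion absorbed on a sub-interval, and then pass to the limit. The case $x\in\{a,b\}$ is trivial since there $\tau_{a,b}=0$, so I fix $x\in(a,b)$ from now on.

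First I fix $\eps > 0$ with $x \in (a+\eps, b-\eps)$ and let $\tau_\eps := \inf\{t \geq 0 : X_t^x \notin (a+\eps, b-\eps)\}$. Given any $g \in C^2[a+\eps, b-\eps]$ with $g''(a+\eps) = g''(b-\eps) = 0$, hypothesis (a) yields $f \in \mf{D}(\msf{L})$ whose restriction to $[a+\eps, b-\eps]$ is $g$; by (b), $\msf{L} f(y) = \tfrac{1}{2} g''(y)$ for every $y \in (a+\eps, b-\eps)$. Since $X$ is Feller with generator $\msf{L}$, Dynkin's formula yields that $f(X_t^x) - f(x) - \int_0^t \msf{L} f(X_s^x)\,ds$ is a $P_x$-martingale, and optional stopping at $\tau_\eps$ produces a bounded martingale whose integrand coincides with $\tfrac12 g''(X_s^x)$ for every $s<\tau_\eps$.

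In my view the main obstacle is to establish path continuity of $X^x$ on $[0, \tau_{a,b})$, so that $X_{\tau_\eps}^x \in \{a+\eps, b-\eps\}$ almost surely and therefore $f(X_{\tau_\eps}^x) = g(X_{\tau_\eps}^x)$. Hypothesis (b) asserts that $\msf{L}$ acts as the purely local operator $\tfrac12 \partial_x^2$ on $\mf{D}(\msf{L})$ throughout $(a,b)$, which should force the Lévy (jump) kernel of $X^x$ at each interior point to vanish. I would justify this by mimicking the exit-measure construction used in Lemma~\ref{quaseFeller} around an interior point $y$ and invoking (a) to supply sufficient test functions, concluding that no mass of the exit measure escapes arbitrarily small neighborhoods of $y$. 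Combined with the càdlàg property, this yields the desired path continuity, and consequently for every admissible $g$,
\begin{equation*}
    g(X_{t \wedge \tau_\eps}^x) - g(x) - \int_0^{t \wedge \tau_\eps} \tfrac{1}{2} g''(X_s^x)\,ds
\end{equation*}
is a $P_x$-martingale. Therefore $\{X_{t\wedge\tau_\eps}^x\}_{t\geq 0}$ solves the martingale problem associated with $\tfrac12 \partial_x^2$ on the domain $\{g \in C^2[a+\eps, b-\eps] : g''(a+\eps) = g''(b-\eps) = 0\}$.

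This operator is precisely the generator of Brownian motion on $[a+\eps, b-\eps]$ absorbed at the endpoints, a Feller process (Theorem~\ref{dgBM} applied at each endpoint), whose martingale problem is well-posed. Hence the two stopped laws agree: $\{X_{t \wedge \tau_\eps}^x\}_{t\geq 0}$ and $\{B_{t \wedge \tau_\eps^B}^x\}_{t\geq 0}$ coincide in distribution, where $\tau_\eps^B$ is the analogous exit time of $B^x$. Finally I let $\eps \downarrow 0$: path continuity on both sides forces $\tau_\eps \uparrow \tau_{a,b}$ and $\tau_\eps^B \uparrow \tau_{a,b}$ almost surely, and the stopped trajectories converge uniformly on compact time intervals. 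Equality of laws is stable under this limit in Skorohod space, which concludes the argument.
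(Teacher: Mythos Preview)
Your strategy is essentially the same as the paper's: both argue via Dynkin's martingale, identify the stopped process $X_{t\wedge\tau_\eps}$ with absorbed Brownian motion on $[a+\eps,b-\eps]$ through well-posedness of the martingale problem, and then send $\eps\downarrow 0$ (the paper phrases the last step as ``a coupling'').

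The one place you diverge is that you explicitly flag path continuity on $(a,b)$ as an obstacle, needed so that $X_{\tau_\eps}\in\{a+\eps,b-\eps\}$ and hence $f(X_{t\wedge\tau_\eps})=g(X_{t\wedge\tau_\eps})$. The paper's proof glosses over this point entirely: it writes the stopped martingale in terms of $f$ and then invokes the martingale problem as if the replacement by $g$ were automatic. Your concern is legitimate for the proposition as stated, so your version is the more careful one. That said, the route you sketch---reproducing the exit-measure machinery of Lemma~\ref{quaseFeller}---is heavier than needed. A quicker argument: for $y\in(a,b)$ use (a) to produce $f\in\mf D(\msf L)$ vanishing on a neighborhood of $y$, nonnegative, and strictly positive outside a slightly larger neighborhood contained in $(a,b)$; then Dynkin's characteristic operator \eqref{teo: Dykin} together with (b) gives $0=\tfrac12 f''(y)=\msf Lf(y)=\lim_{|A|\to 0}E_y[f(X_{\tau_{A^c}})]/E_y[\tau_{A^c}]$, forcing the exit distribution from small intervals to concentrate on their boundaries, which yields continuity directly.
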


\begin{proof}
	Dynkin's martingale associated to $X_t^x$ is 
	\begin{equation*}
		M_t = f(X_t^x) - f(X_0^x) - \int_0^t \msf{L}f(X_s)ds\,. 
	\end{equation*} 
	Since $f$ and $\msf L f$ are bounded, this martingale is uniformly integrable, so the stopped process
		\begin{equation*}
		\widehat{M}_t = f(X_{t\wedge \tau_{a+\eps, b-\eps}}^x) - f(X_0^x) - \int_0^{t} \msf{L}f(X_{s\wedge \tau_{a+\eps, b-\eps}}^x)ds
	\end{equation*}
	is also a martingale, where $\tau_{a+\eps, b-\eps}$ is the hitting time of $\{a+\eps, b-\eps\}$.  By the Martingale Problem (see \cite[Chapter 4]{EK}) and (a) and (b) above, we conclude that  $X_{t\wedge \tau_{a+\eps,b-\eps}}$ has the same generator as $\{B_{t\wedge \tau_{a+\eps,b-\eps}}^x:t\geq 0\}$, whose  generator  consists of the $C^2$-functions whose second derivatives at the boundary are zero, see \cite[Example 2.10]{Liggett} for instance. Hence
	$\{X_{t\wedge \tau_{a+\eps,b-\eps}}^x:t\geq 0\} \overset{d}{=} \{B_{t\wedge \tau_{a+\eps,b-\eps}}^x:t\geq 0\}$
	for any $\eps>0$. A coupling allows to extend the equality above to $\eps=0$.
\end{proof}

\section*{Acknowledgments}
D.E.\ and T.F. were supported by the National Council for Scientific and Technological Development - CNPq via a Bolsas de Produtividade  303348/2022-4 and 306554/2024-0, respectively. D.E.\ moreover acknowledge support by the Serrapilheira Institute (Grant Number Serra-R-2011-37582). D.E and T.F.\  acknowledge support by the National Council for Scientific and Technological Development - CNPq via two Universal Grants (Grant Number 406001/2021-9 and 401314/2025-1). D.E. and T.F\ were
partially supported by FAPESB (Edital FAPESB Nº 012/2022 - Universal - NºAPP0044/2023). W.M.\ thanks CAPES (Coordena\c c\~ao de Aperfeiçoamento de Pessoal de Nível Superior) for the support via a master scholarship and a PhD scholarship.
The authors are grateful to Milton Jara, Otávio Menezes and Pedro Cardoso for providing useful comments on a previous version of the article.

\bibliography{bibliografia}
\bibliographystyle{plain}
\end{document}